\makeatletter \@addtoreset{equation}{section} \makeatother
\renewcommand\thetable{\thesection.\@arabic\c@table}
\theoremstyle{plain}
\newtheorem{theorem}{Theorem}[section]
\newtheorem{lemma}{Lemma}[section]
\newtheorem{corollary}{Corollary}[section]
\newtheorem{definition}{Definition}[section]
\newtheorem{remark}{Remark}[section]
\newtheorem{Thm}{Theorem}[section]
\newtheorem{Lem}[Thm]{Lemma}
\newtheorem{Prop}[Thm]{Proposition}
\theoremstyle{remark}
\newtheorem{Def}[Thm] {Definition}
\newcommand{\ka} {\kappa}
\newcommand{\N}{{\mathbb{N}}}
\long\def\begcom#1\endcom{}
\newcommand{\length}{\operatorname{\length}}
\def\length{\operatorname{length}}
\def\ln{\operatorname{ln}}
\newcommand{\bl} {\begin{lemma}}
	\newcommand{\el} {\end{lemma}}
\newcommand{\bt} {\begin{theorem}}
	\newcommand{\et} {\end{theorem}}
\newcommand{\bp}{\begin{proof}}
	\newcommand{\ep}{\end{proof}}
\newcommand  {\ee} {\end{equation}}
\newcommand  {\beq} {\begin{eqnarray*}}
	\newcommand  {\eeq} {\end{eqnarray*}}
\newcommand  {\bd} {\begin{definition}}
	\newcommand  {\ed} {\end{definition}}
\def\ep{\noindent{\hfill $\Box$}}
\begin{document}
	
\title{Bowen metric mean dimension formula for saturated sets}

\author{Yi Yuan}

\address{Yi Yuan, School of Mathematical Sciences,  Xihua University\\Chengdu 610000, People's Republic of China}
\email{19110180015@fudan.edu.cn}

\begin{abstract}
For dynamical systems with infinite topological entropy, the classical entropy fails to quantify their complexity effectively, while the metric mean dimension provides a natural extension in this context. In this paper, we study the complexity of saturated sets from the perspective of Bowen upper and lower metric mean dimensions. We show that if a dynamical system $(X,f)$ satisfies the $g$-almost product property, then for any compact connected non-empty subset $K$ of a set of the convex combination of finitely many invariant measures, the saturated set $G_K$ satisfies
$$
\overline{\operatorname{mdim}}^B_{M}\left(G_K, f, d\right)=\limsup_{\varepsilon \rightarrow 0} \frac{1}{|\ln \varepsilon|} \inf_{\mu \in K} \inf _{\operatorname{diam}(\xi)<\varepsilon} h_\mu(f, \xi),
$$
$$
\underline{\operatorname{mdim}}^B_{M}\left(G_K , f, d\right)=\liminf_{\varepsilon \rightarrow 0} \frac{1}{|\ln \varepsilon|} \inf_{\mu \in K} \inf_{\operatorname{diam}(\xi)<\varepsilon} h_\mu(f, \xi),
$$
where $\overline{\operatorname{mdim}}^B_{M}$ and $\underline{\operatorname{mdim}}^B_{M}$ denote  Bowen upper and lower metric mean dimensions of $f$ on $G_K$, respectively,  and $h_\mu(f,\xi)$ is the measure-theoretic entropy of the measure $\mu$ with respect to the partition $\xi$.
As an application, we give an abstract framework for multifractal analysis of general continuous functions, which  extends the prior work of Backes (2023, \textit{IEEE Trans. Inform. Theory}, 69, 5485–5496) and Liu (2024, \textit{J. Math. Anal. Appl.}, 534, 128043).
\end{abstract}
\keywords{Saturated set; $g$-almost product property; Metric mean dimension;  Variational principle; Multifractal analysis.}
\subjclass[2020] { 37A35, 37D35, 37A05, 37B40, 37B65, 37B05, 37C45.}

\maketitle

\section{Introduction}
Let  $(X, d)$  be a compact metric space with Borel  $\sigma$-algebra  $\mathcal{B}(X)$, and let  $f: X \to X$  be a continuous map. Such a pair  $(X, f)$  is called a dynamical system. For a dynamical system $(X,f)$, let $\mathcal{M}(X)$, $\mathcal{M}_{f}(X)$, $\mathcal{M}^{e}_{f}(X)$ denote the space of probability measures, $f$-invariant, $f$-ergodic probability measures, respectively.
Let $\mathbb{Z},$ $\mathbb{N}$ and $\mathbb{N^{+}}$ denote the set of integers, non-negative integers and positive integers, respectively.

 In the study of dynamical systems, characterizing the complexity of point sets has always been a central topic, and
topological entropy is a powerful tool for quantifying such complexity. The concept of  topological entropy was  first introduced by Adler, Konheim, and McAndrew \cite{AKM1965} and later extended by Bowen \cite{Bowen1973} to non-compact sets.
Over the past decades, extensive research has been devoted to study the entropy of various orbit sets.
 For instance,
 the irregular sets \cite{Thompson2010,Dong2018irregular,Chen2023Lyapunov,Zhao2011asymptoticallyadditive} have been shown to possess full topological entropy, and a variational principle connecting  Bowen topological entropy of level sets and measure-theoretic entropy  has been established \cite{TV2003,Barreira2001Variationalprinciples}.  These studies are usually known as part of multifractal analysis associated with Birkhoff averages.

 In the theory of multifractal analysis, saturated sets have been particularly well studied.  A saturated set is defined as the collection of points whose empirical measures accumulate exactly on a given compact connected non-empty subset of invariant measures. More precisely, 
  Given $x\in X$,   the empirical measure of  $x$ is 
 $\mathcal{E}_{n}(x) : =\frac{1}{n}\sum_{j=0}^{n-1}\delta_{f^{j}x}$, 
 where $\delta_{x}$ denotes the Dirac measure at $x$.
 Let $V_f(x)$ be  the set of accumulation points of $\{\mathcal{E}_n(x)\}.$   It is well known that  $V_f(x)$  is a compact connected non-empty subset of  $\mathcal{M}_f(X)$, thus,  for any compact connected non-empty subset $K\subset \mathcal{ M}_f(X)$, we can define  the saturated  set of $K$ as follows $$G_K : =\left\{x \in X: V_f(x)=K\right\}. $$  
 when $K=\{\mu\}$ is a singleton,  $G_{K}$ is called the generic set of  $\mu$.  These sets provide a natural decomposition of the space $X$ according to the asymptotic behavior of empirical measures.

  Existing studies mainly focused on analyzing the  entropy formula for saturated sets.
 Bowen \cite{Bowen1973} showed that the topological entropy of the generic set of an ergodic measure $\mu$  coincides with the measure-theoretic entropy $h_{\mu}(f)$, i.e.,
 $$
 h_{\text {top }}^B\left(f, G_\mu\right)=h_\mu(f),
 $$ 
 where $h_{\text {top }}^B\left(f, G_\mu\right)$ denotes the  Bowen entropy of the set $G_\mu$.  
   Pfister and Sullivan \cite{PS2007} gave the entropy formula  
 $$h_{\text {top }}^B\left(f, G_K\right)=\inf\{ h_{\mu}(f): \mu \in K \},$$
 for  systems  satisfying uniform separation property and  $g$-almost product property, where $K$ is any compact connected non-empty subset of $\mathcal{M}_f(X)$.  Recently, Huang, Tian, and Wang \cite{HTW2019} considered the transitively saturated set $ G_K \cap \operatorname{Trans} $ to study the entropy of recurrence sets, where $\operatorname{Trans}$ denotes the set of transitive points.

However, in many highly complex systems, the topological entropy may fail to provide meaningful information. For example, let  $X$ be an infinite compact metric space and $\sigma: X^{\mathbb{N}} \to X^{\mathbb{N}} $ be the shift map defined by $(\sigma x)_k= x_{k+1}$. In this case, the topological entropy is infinite and thus  no longer serves as an effective label for classification.  Moreover, Yano \cite{Yano1980} proved that  $C^0$-generic dynamical systems have infinite topological entropy, indicating that this limitation occurs in a wide class of systems.  To obtain a new invariant capable of distinguishing maps with infinite entropy, Lindenstrauss and Weiss \cite{LW2000}  introduced the upper and lower metric mean dimensions, which is a natural extension of entropy, capturing the growth rate of information as the scale of observation—or equivalently, the diameter of partitions—shrinks, the precise definitions are given in Section \ref{subsection_metricmeandimension}.
Afterwards, 
Bowen upper and lower metric mean dimension of $f$ on a subset $Z\subseteq X$ (not necessarily compact or invariant) with respect to the metric $d$ were introduced in \cite{Cheng2021Uppermetricmean}. They are denoted by  $\overline{\operatorname{mdim}}^B_M(X, f, d)$ and $\underline{\operatorname{mdim}}^B_M(X, f, d)$, respectively, the precise definitions can refer to Section \ref{subsection_Bowenmetricmeandimension}.

  The theory of the (Bowen) metric mean dimension has made significant progress in recent years.  Existing research has mainly concerned the embedding problem \cite{Lindenstrauss1999Meandimension,LW2000}, rate distortion theory \cite{Velozo2017arxiv,Wang2021},  compression  theory  \cite{Gutman2019Newuniform ,Gutman2020Metricmean}, variational principles for metric mean dimension \cite{Tsukamoto2020, Wang2021, CRV2022}, and  Bowen metric mean dimension of irregular sets \cite{BR2023} and level sets \cite{LL,FO}.   The packing metric mean dimension formula for generic sets  has been established by Yang, Chen, and Zhou \cite{YCZ}. Motivated by the study of saturated sets from the perspective of topological entropy, a natural question arises: \textbf{Does  Bowen metric mean dimension formula still exist for saturated sets?}
In this paper, we give an affirmative answer to this  question.  First, we give the following Bowen metric mean dimension formula for  generic set of an ergodic measure.


\begin{Thm}\label{Maintheorem_G_mu}
  Suppose $(X,f)$ is a dynamical system and $\mu \in \mathcal{ M}_f(X)$ is ergodic. Then 
	$$\overline{\operatorname{mdim}}_M^B(G_{\mu},f,d)=\limsup_{\varepsilon \rightarrow 0}\frac{1}{|\ln \varepsilon|}\inf_{ \operatorname{diam} (\xi)<\varepsilon}h_{\mu}(f,\xi),$$
	and 
	$$\underline{\operatorname{mdim}}_M^B(G_{\mu},f,d)=\liminf_{\varepsilon \rightarrow 0}\frac{1}{|\ln \varepsilon|}\inf_{ \operatorname{diam} (\xi)<\varepsilon}h_{\mu}(f,\xi),$$
	where $\operatorname{diam} (\xi)$ denotes the diameter of the partition $\xi$ and the infimum is taken over all finite measurable partitions of $X$ satisfying $\operatorname{diam}(\xi)<\varepsilon$.
\end{Thm}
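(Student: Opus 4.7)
The plan is to establish, at each fixed scale $\varepsilon>0$, a sandwich of the form
$$\inf_{\operatorname{diam}\xi<c\varepsilon} h_\mu(f,\xi)\;\leq\; h^{B}_{\topp}(f,G_\mu,\varepsilon)\;\leq\; \inf_{\operatorname{diam}\xi<\varepsilon} h_\mu(f,\xi),$$
where $h^{B}_{\topp}(f,G_\mu,\varepsilon)$ denotes the scale-$\varepsilon$ Bowen topological entropy of $G_\mu$ (the exponent obtained in the Carath\'eodory construction with $(n,\varepsilon)$-Bowen balls) and $c>0$ is a positive constant depending only on the metric. Once this is in hand, dividing by $|\ln\varepsilon|$ and using $|\ln(c\varepsilon)|/|\ln\varepsilon|\to 1$ as $\varepsilon\to 0$, the $\limsup$ (resp.\ $\liminf$) of the two sides coincide, yielding both formulas in the theorem.

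For the upper bound I fix a finite partition $\xi$ with $\operatorname{diam}\xi<\varepsilon$. Since each atom $\xi(x)$ lies inside $B(x,\varepsilon)$, the dynamical refinement $\xi_n(x)=\bigvee_{i=0}^{n-1}f^{-i}\xi(f^i x)$ is contained in the Bowen ball $B_n(x,\varepsilon)$. By the Shannon--McMillan--Breiman theorem, for $\mu$-a.e.\ $x$ one has $-\tfrac{1}{n}\log\mu(\xi_n(x))\to h_\mu(f,\xi)$. Fix $\delta>0$ and let $A_N$ be the set of $x$ at which this convergence holds from index $N$ onwards; then $\mu(A_N)\to 1$, and on $A_N$ at most $e^{n(h_\mu(f,\xi)+\delta)}$ atoms of $\xi_n$ suffice to cover it, hence at most the same number of Bowen balls of radius $\varepsilon$ do. This gives $h^{B}_{\topp}(f,A_N,\varepsilon)\leq h_\mu(f,\xi)+\delta$. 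The $\mu$-null remainder $G_\mu\setminus\bigcup_N A_N$ requires extra care: one exploits the dynamical definition of $G_\mu$ (all of its points have empirical measures tending to $\mu$) together with the choice of a $\mu$-continuity partition $\xi$, so that Birkhoff averages of $1_{\xi_j}$ converge on every point of $G_\mu$; a combinatorial counting of admissible symbolic itineraries of length $n$ then bounds the Bowen entropy of the exceptional set by the same quantity. Countable $\sigma$-stability of Bowen topological entropy yields $h^{B}_{\topp}(f,G_\mu,\varepsilon)\leq h_\mu(f,\xi)+\delta$, and sending $\delta\to 0$ and infimizing over $\xi$ closes the upper bound.

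For the lower bound I invoke the Brin--Katok local entropy formula: for ergodic $\mu$ there is a constant $\bar h_\mu(\varepsilon)$ such that $\liminf_n -\tfrac{1}{n}\log\mu(B_n(x,\varepsilon))=\bar h_\mu(\varepsilon)$ for $\mu$-a.e.\ $x$, and a standard comparison between Bowen balls and partition atoms gives $\bar h_\mu(\varepsilon)\geq \inf_{\operatorname{diam}\xi<2\varepsilon} h_\mu(f,\xi)$. The mass distribution principle for Bowen topological entropy then asserts that if $\nu$ is a Borel probability measure with $\nu(Z)>0$ and $\liminf_n -\tfrac{1}{n}\log\nu(B_n(x,\varepsilon))\geq s$ for $\nu$-a.e.\ $x\in Z$, then $h^{B}_{\topp}(f,Z,\varepsilon)\geq s$. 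Applying this with $\nu=\mu$ and $Z=G_\mu$ (which satisfies $\mu(G_\mu)=1$ by ergodicity) produces the left-hand inequality at scale $\varepsilon$.

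The main obstacle will be the upper bound: Shannon--McMillan--Breiman alone gives information only on a set of full $\mu$-measure, whereas $G_\mu$ is a dynamically defined set that can contain points of arbitrarily complex local behaviour. Controlling the Bowen topological entropy of the $\mu$-null remainder $G_\mu\setminus\bigcup_N A_N$ without any a priori bound on the local metric mean dimension is what forces the explicit use of the empirical-measure convergence built into the definition of $G_\mu$, and will be the technical heart of the argument.
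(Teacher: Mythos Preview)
Your lower bound has a genuine gap. You claim that ``a standard comparison between Bowen balls and partition atoms gives $\bar h_\mu(\varepsilon)\geq \inf_{\operatorname{diam}\xi<2\varepsilon} h_\mu(f,\xi)$'', but the standard comparison runs in the \emph{opposite} direction: if $\operatorname{diam}\xi<\varepsilon$ then $\xi_n(x)\subset B_n(x,\varepsilon)$, hence $\mu(\xi_n(x))\leq\mu(B_n(x,\varepsilon))$, so $-\tfrac1n\log\mu(\xi_n(x))\geq-\tfrac1n\log\mu(B_n(x,\varepsilon))$ and therefore $h_\mu(f,\xi)\geq\bar h_\mu(\varepsilon)$ for every such $\xi$. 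Taking the infimum yields $\inf_{\operatorname{diam}\xi<\varepsilon}h_\mu(f,\xi)\geq\bar h_\mu(\varepsilon)$, which is useless for your purpose. The Brin--Katok/mass-distribution route thus only gives $h_{top}^B(G_\mu,f,\varepsilon)\geq\bar h_\mu(\varepsilon)$ with $\bar h_\mu(\varepsilon)$ on the wrong side of the target. Obtaining the reverse inequality at a fixed scale is exactly the delicate point; it is not a ball/atom comparison but requires either the open-cover (Romagnoli) entropy machinery, or---as the paper does in Lemma~\ref{lemma_G_mu}---Bowen's original argument: one fixes $\xi$ with $\operatorname{diam}\xi<\varepsilon$, chooses an open cover $\mathcal U$ with $H_\mu(\xi\mid\eta)<\varepsilon$ for every $\eta\prec\mathcal U$, constructs for each $n$ a partition $\alpha_n$ with $f^k\alpha_n\prec\mathcal U$ for $0\le k<n$ and bounded local multiplicity, applies SMB to $(f^n,\alpha_n)$, and then passes back to $h_\mu(f,\xi)$ via $H_\mu(\xi\mid f^k\alpha_n)<\varepsilon$.

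Your upper bound, by contrast, is essentially sound but over-engineered: the SMB step covering the full-measure set $\bigcup_N A_N$ is redundant, because the ``remainder'' argument you sketch---choose a $\mu$-continuity partition, use that every $x\in G_\mu$ has $\mathcal E_n(x)\to\mu$ so that $\xi^k$-block frequencies converge to their $\mu$-probabilities, and count typical itineraries---already handles all of $G_\mu$. This is precisely the paper's route, packaged as Lemmas~\ref{lemma-variationalequation}--\ref{lemma-leq2}: the itinerary count is the second half of Walters' variational-principle proof (giving $\limsup_n\tfrac1n\log\#E_n\le\inf_{\operatorname{diam}\xi<\varepsilon,\,\mu(\partial\xi)=0}h_\mu(f,\xi)$ for $(n,\varepsilon)$-separated $E_n$ with empirical measures converging to $\mu$), and one then covers $G_\mu\subset\bigcup_{n\ge M}X_{n,F}$. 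One subtlety you gloss over is that restricting to $\mu$-continuity partitions must not change the infimum; this is the content of the paper's Proposition~\ref{prop_infequal} and is not entirely trivial. So, contrary to your closing paragraph, the technical heart of the proof is the \emph{lower} bound, not the upper one.
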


Moreover, we investigate Bowen metric mean dimension of the saturated set $G_K$ for systems satisfying  $g$-almost  product  property (see Definition \ref{Def_galmost}), where  $K$  is the compact connected non-empty subset of a set of the convex combination of finitely many invariant measures.
Given $x\in X$,
the $\omega$-limit set of $x$ is given by $\omega_f(x) :  =\bigcap_{n=0}^{\infty} \overline{\bigcup_{k=n}^{\infty}\left\{f^k x\right\}}.$   The support of the measure $\mu \in \mathcal{M}_f(X)$ is defined as 
$$S_\mu : =\{x\in X:\mu(U)>0\ \text{for any neighborhood}\ U\ \text{of}\ x\},$$ and the measure center of the system is defined by  $C_f(X) =\overline{\bigcup_{\mu\in\mathcal{M}_f(X)}S_{\mu}}.$
For any \( m \in \mathbb{N}^+ \) and \( \{\nu_i\}_{i=1}^m \subseteq \mathcal{M}_{f}(X) \), \( \operatorname{conv}\{\nu_i\}_{i=1}^m \)  denotes the set of convex combination of \( \{\nu_i\}_{i=1}^m \), i.e.,  
$$
\operatorname{conv}\left\{\nu_i\right\}_{i=1}^m : = \left\{\sum_{i=1}^m t_i \nu_i : t_i \in [0,1], 1 \leq i \leq m, \sum_{i=1}^m t_i = 1\right\}.
$$
We define the refined saturated set associated with 
$K$ as
$$G^C_K : = G_K \cap \{x \in X :  C_f(X) \subset \omega_f(x)\}.$$

\begin{Thm}\label{MainTheorem}
	 Suppose $(X,f)$ is a dynamical system  with the $g$-almost product property. For any $m \in \mathbb{N}$, $ \{\mu_i\}_{i=1}^m\subset\mathcal{M}_f(X)$ and any  compact connected non-empty subset  $K\subset \operatorname{conv} \{\mu_i\}_{i=1}^m$, one has
	\begin{eqnarray*}
	\overline{\operatorname{mdim}}^B_{M}\left(G^C_K, f, d\right) = \limsup _{\varepsilon \rightarrow 0} \frac{1}{|\ln \varepsilon|} \inf_{\mu \in K} \inf _{\operatorname{diam}(\xi)<\varepsilon} h_\mu(f, \xi),
\end{eqnarray*}
	and
		\begin{eqnarray*}
		\underline{\operatorname{mdim}}^B_{M}\left(G^C_K, f, d\right) = \liminf_{\varepsilon \rightarrow 0} \frac{1}{|\ln \varepsilon|} \inf_{\mu \in K} \inf _{\operatorname{diam}(\xi)<\varepsilon} h_\mu(f, \xi),
	\end{eqnarray*}
	where $\operatorname{diam} (\xi)$ denotes the diameter of the partition $\xi$ and the infimum is taken over all finite measurable partitions of $X$ satisfying $\operatorname{diam}(\xi)<\varepsilon$.
\end{Thm}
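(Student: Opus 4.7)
The plan is to prove the two inequalities $\le$ and $\ge$ separately, for both the $\limsup$ and $\liminf$ versions simultaneously (since the scale-by-scale estimates we produce are uniform in $\varepsilon$, and one only passes $\limsup$ or $\liminf$ at the very end). I will write the right-hand side as $\limsup_{\varepsilon \to 0}\tfrac{H(\varepsilon)}{|\log \varepsilon|}$ with $H(\varepsilon):=\inf_{\mu\in K}\inf_{\operatorname{diam}\xi<\varepsilon}h_{\mu}(f,\xi)$, and show the scale-by-scale bound $\mathrm{mdim}^B_{M,\varepsilon}(G^C_K,f,d)=H(\varepsilon)/|\log\varepsilon|+o(1/|\log\varepsilon|)$.

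For the upper bound, I would exploit the fact that $G^C_K\subset G_K\subset \{x:\mu\in V_f(x)\}$ for every $\mu\in K$. Fix any $\mu\in K$ and any finite measurable partition $\xi$ with $\operatorname{diam}\xi<\varepsilon$. A Katok-type covering argument (identical in spirit to Bowen's 1973 proof, but run at a fixed scale $\varepsilon$) shows that whenever a point $x$ has $\mu$ as an accumulation measure, one can cover it, for infinitely many $n$, by a Bowen ball $B_n(x,\varepsilon)$ drawn from a family of cardinality at most $\exp\bigl(n(h_\mu(f,\xi)+\delta)\bigr)$; summing the resulting Carathéodory-style covers and taking $\delta\to 0$ gives $\overline{\operatorname{mdim}}^B_{M,\varepsilon}(\{x:\mu\in V_f(x)\})\le h_\mu(f,\xi)/|\log\varepsilon|$. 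Taking the infimum over $\xi$ with $\operatorname{diam}\xi<\varepsilon$, then over $\mu\in K$, and finally $\limsup$ or $\liminf$ in $\varepsilon$ yields the desired upper bound for both formulas.

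For the lower bound, I would run the Pfister--Sullivan Moran-style construction, but adapted to the scale-sensitive setting of metric mean dimension and to the strengthened target set $G^C_K$. The ingredients are: (i) a countable dense sequence $(\alpha_k)$ in $K$ obtained from convex combinations of the $\mu_i$, together with ergodic decompositions of the $\mu_i$ giving, at each scale $\varepsilon$, $(n_k,\varepsilon)$-separated sets of cardinality at least $\exp\bigl(n_k(\inf_{\mu\in K}\inf_{\operatorname{diam}\xi<\varepsilon}h_\mu(f,\xi)-\gamma)\bigr)$ with empirical measures close to $\alpha_k$; (ii) a countable dense sequence $(z_\ell)$ in the measure center $C_f(X)$; (iii) the g-almost product property, used to glue together blocks drawn from the separated sets above with occasional "visit" blocks that stay $\varepsilon$-close to the $z_\ell$ for long stretches, with gluing gaps controlled by the mistake function $g$. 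Choosing the block-length schedule to grow fast enough forces the empirical measures of the concatenated orbits to accumulate on all of $K$ (hence the point is in $G_K$) while the visit-blocks force $\omega_f(x)\supset C_f(X)$, so the Moran set $F$ lies inside $G^C_K$. Distributing the natural product-measure on $F$ and applying a mass-distribution principle adapted to Bowen's scale-$\varepsilon$ dimension then gives the matching scale-by-scale lower bound, after which $\limsup$ or $\liminf$ in $\varepsilon$ concludes.

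The main obstacle I expect is the simultaneous control, at each fixed scale $\varepsilon$, of three competing requirements in the Moran construction: keeping the block lengths $n_k$ long enough to recover the full entropy $H(\varepsilon)$ up to an arbitrarily small multiplicative error independent of $\varepsilon$; keeping the gluing gaps from the g-almost product property negligible compared to the block lengths; and still inserting infinitely many $z_\ell$-visit blocks frequently enough to ensure $C_f(X)\subset \omega_f(x)$ without corrupting either the empirical-measure accumulation at $K$ or the mass-distribution lower bound. Balancing these against each other uniformly in $\varepsilon$, so that the eventual error terms vanish after dividing by $|\log\varepsilon|$ and taking $\varepsilon\to 0$, is the delicate part; the rest is bookkeeping on top of the Pfister--Sullivan scheme.
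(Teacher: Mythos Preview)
Your overall strategy matches the paper's: the upper bound via the containment $G^C_K\subset G_K\subset G^{\{\mu\}}$ and a scale-$\varepsilon$ Bowen-type covering argument (the paper's Lemma~\ref{lemma-leq2}), and the lower bound via a Pfister--Sullivan Moran construction enriched with visit-blocks to force $C_f(X)\subset\omega_f(x)$, concluded by a mass-distribution principle. So the architecture is right.

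There are, however, two technical points specific to the $g$-almost product setting (as opposed to full specification) that your sketch does not address and that are not mere bookkeeping. First, taking an arbitrary dense sequence $(z_\ell)$ in $C_f(X)$ is insufficient: the $g$-almost product property only lets you trace the \emph{orbit segment} of $z_\ell$ with up to $g(n)$ mistakes, which does not by itself force the constructed orbit to pass near $z_\ell$. The paper uses that the almost periodic points are dense in $C_f(X)$ (Proposition~\ref{lemma-APdense}); almost periodicity gives a return-gap $L_k$ so that the proportion of times the orbit of $z_\ell$ is near $z_\ell$ exceeds $1/L_k$, and choosing block lengths with $g(l_kL_k)/(l_kL_k)<1/(4L_k)$ guarantees a good tracing time coincides with a return time. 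Second, ordinary $(n,\varepsilon)$-separation can be destroyed when blocks are glued under $g$-almost product, since up to $g(n)$ coordinates are uncontrolled. The paper therefore works with $(\delta^*,n,\varepsilon^*)$-separated sets (separation on a $\delta^*$-proportion of times) and needs a scale-$\varepsilon$ entropy formula producing such sets of the right cardinality; this is the content of Lemma~\ref{lemma-Lambda} and Theorem~\ref{Thm_relationPSU}, which link $\inf_{\operatorname{diam}\xi<\varepsilon}h_\mu(f,\xi)$ to $\overline{PS}(\mu,\delta^*,\varepsilon)$ via ergodic decomposition. Without these two refinements the gluing step either fails to put points in $G^C_K$ or fails to keep the Moran set separated, so the mass-distribution lower bound collapses.
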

\begin{remark}
	According to the approach of Pfister and Sullivan \cite{PS2007},  the uniform separation property is required to establish the entropy formula for saturated sets of any  compact connected non-empty subset $K \subset \mathcal{M}_f(X)$.
	When considering any  compact connected non-empty subset of a set of convex combination of finite measures, the uniform separation property is not necessary.
	 Since for a dynamical system with the uniform separation property,  the topological entropy is finite \cite[Page 938]{PS2007}, then the metric mean dimension for such systems is zero,  therefore,  this question is meaningless. Thus,  it remains a problem to establish a Bowen metric mean dimension formula for saturated sets of arbitrary compact connected subsets. 
\end{remark}

\begin{remark}
	Further, if there is an invariant measure with full support, then  $G_K^C=G_K\cap \operatorname{Trans}$, where $\operatorname{Trans}:=\{x \in X : X= \omega_f(x)   \} $, and $$\overline{\operatorname{mdim}}^B_{M}\left(G_K\cap \operatorname{Trans} , f, d\right)=\limsup _{\varepsilon \rightarrow 0} \frac{1}{|\ln \varepsilon|} \inf_{\mu \in K} \inf _{\operatorname{diam}(\xi)<\varepsilon} h_\mu(f, \xi),$$
	and  $$\underline{\operatorname{mdim}}^B_{M}\left(G_K \cap \operatorname{Trans} , f, d\right)=\liminf_{\varepsilon \rightarrow 0} \frac{1}{|\ln \varepsilon|} \inf_{\mu \in K} \inf _{\operatorname{diam} (\xi)<\varepsilon} h_\mu(f, \xi).$$
\end{remark}

Directly from Theorem \ref{MainTheorem}, we obtain the following corollary.

\begin{corollary}\label{cor-maintheorem}
 Suppose $(X,f)$ is a dynamical system  with the $g$-almost product property. Then for any $\mu \in \mathcal{M}_f(X)$,  we have
	$$
	\overline{\operatorname{mdim}}^B_{M}\left(G_\mu, f, d\right)=\limsup _{\varepsilon \rightarrow 0} \frac{1}{|\ln \varepsilon|} \inf _{\operatorname{diam} (\xi)<\varepsilon} h_\mu(f, \xi),
	$$
	and
	$$\underline{\operatorname{mdim}}^B_{M}\left(G_{\mu}, f, d\right)=\liminf_{\varepsilon \rightarrow 0} \frac{1}{|\ln \varepsilon|}  \inf _{\operatorname{diam} (\xi)<\varepsilon} h_\mu(f, \xi).$$
\end{corollary}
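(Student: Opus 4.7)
The plan is to derive the corollary as the special case $K=\{\mu\}$ of Theorem~\ref{MainTheorem}, supplemented by a general Bowen-type upper bound that holds without any specification-like hypothesis. First I would apply Theorem~\ref{MainTheorem} with $m=1$, $\mu_1=\mu$, and $K=\{\mu\}$: the singleton $\{\mu\}$ is trivially compact and connected and sits inside $\operatorname{cov}\{\mu\}=\{\mu\}$, so the hypotheses of Theorem~\ref{MainTheorem} are satisfied. This immediately yields
$$
\overline{\operatorname{mdim}}^B_{\mathrm{M}}(G_\mu^C,f,d)=\limsup_{\varepsilon\to 0}\frac{1}{|\log\varepsilon|}\inf_{\operatorname{diam}\xi<\varepsilon}h_\mu(f,\xi),
$$
together with the analogous liminf identity. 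Since $G_\mu^C\subseteq G_\mu$ and the Bowen metric mean dimension is monotone in its set argument, this already produces the ``$\ge$'' halves of the two equalities stated in the corollary.

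For the reverse ``$\le$'' direction I would establish, for every $\varepsilon>0$, a scale-localized Bowen estimate of the form
$$
h^B(G_\mu,f,\varepsilon)\le \inf_{\operatorname{diam}\xi<\varepsilon}h_\mu(f,\xi),
$$
where $h^B(G_\mu,f,\varepsilon)$ is the $\varepsilon$-scale Bowen topological entropy that appears under the $\limsup$/$\liminf$ in the definition of $\overline{\operatorname{mdim}}^B_{\mathrm{M}}$. This is a scale-sensitive refinement of the classical Bowen inequality $h^B_{\topp}(G_\mu,f)\le h_\mu(f)$: one runs the standard Bowen covering argument with a finite measurable partition $\xi$ of diameter strictly less than $\varepsilon$, and exploits the fact that every atom of $\xi^n=\bigvee_{i=0}^{n-1}f^{-i}\xi$ is contained in a single Bowen $(n,\varepsilon)$-ball. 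This allows covers of $G_\mu$ by Bowen $(n,\varepsilon)$-balls to be dominated by the partition entropy $h_\mu(f,\xi)$; dividing through by $|\log\varepsilon|$ and passing to $\limsup$ or $\liminf$ yields the matching upper bound, with no g-almost product hypothesis required.

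The main technical obstacle is precisely the scale-tracking in this Bowen-type inequality. The classical argument partitions orbits by their $\xi^n$-empirical statistics via Shannon--McMillan--Breiman and covers each class by a controlled number of Bowen balls, producing a uniform bound by $h_\mu(f)$; the delicate point here is to verify that the radii of those balls are genuinely governed by $\operatorname{diam}\xi$, so that the infimum over partitions of diameter $<\varepsilon$ is exactly the right quantity to bound $h^B(G_\mu,f,\varepsilon)$ at the matching scale. Once this scale bookkeeping is secured, combining the lower bound from Theorem~\ref{MainTheorem} with this upper bound completes the proof.
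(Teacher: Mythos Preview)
Your plan coincides with the paper's: the lower bound comes from Theorem~\ref{MainTheorem} applied to $K=\{\mu\}$ together with $G_\mu^C\subseteq G_\mu$, and the upper bound is exactly the scale-wise inequality $h^B_{\text{top}}(G_\mu,f,\varepsilon)\le\inf_{\operatorname{diam}\xi<\varepsilon}h_\mu(f,\xi)$, which the paper has already established as Lemma~\ref{lemma-leq2}(2) (proved through the Walters/Pfister--Sullivan machinery of Lemmas~\ref{lemma-variationalequation}--\ref{lemma-leq1}, not by a direct partition-atom cover). One caution on your sketch of that inequality: Shannon--McMillan--Breiman controls only $\mu$-a.e.\ points, and for non-ergodic $\mu$ the set $G_\mu$ may be $\mu$-null, so the covering should be driven by the weak$^*$ convergence $\mathcal E_n(x)\to\mu$ and a type-counting estimate (which is what the paper's Lemma~\ref{lemma-leq1} encodes) rather than SMB itself.
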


Saturated sets can be used to describe convergence properties of Birkhoff averages with respect to every continuous function. On the one hand, by weak∗ convergence, $V_f(x)$ is a singleton if and only if the Birkhoff averages of every continuous function converge at $x$. On the other hand, if the accumulation set $V_f(x)$ contains invariant measures $\mu_1, \mu_2$ that can be distinguished by a continuous function $\varphi$ (i.e., $\int \varphi d \mu_1 \neq \int \varphi  d \mu_2$), then $x$ is a Birkhoff irregular point associated to $\varphi$. Therefore, a thorough understanding of saturated sets in the space of probability measures enables us to derive key results  in the multifractal analysis of Birkhoff averages associated with continuous function.

The multifractal analysis of Birkhoff averages is to decompose the space $X$ into subsets of points exhibiting similar dynamical behavior,  and to describe the size of each subsets from the geometrical or topological viewpoint.
 Specifically, for a continuous function $\varphi: X \rightarrow \mathbb{R}$, 
 let $$L_\varphi : =\left[\inf _{\mu \in \mathcal{ M}_f(X)} \int \varphi d\mu, \sup _{\mu \in  \mathcal{ M}_f(X)} \int \varphi d\mu \right].$$
The space  $X$  naturally admits a multifractal decomposition $X=\cup_{a \in L_{\varphi}} R_{\varphi}(a) \cup I_{\varphi},$
where
$$
R_{\varphi}(a) : =\left\{x \in X: \lim _{n \rightarrow \infty} \frac{1}{n} \sum_{i=0}^{n-1} \varphi\left(f^i(x)\right)=a\right\} $$
and
$$I_{\varphi} : =\left\{x \in X: \lim _{n \rightarrow \infty} \frac{1}{n} \sum_{i=0}^{n-1} \varphi\left(f^i(x)\right) \textit{ does not exist }\right\}.
$$
The sets  $R_{\varphi}(a)$  and  $I_{\varphi}$  have been extensively studied from the perspective of entropy \cite{TV1999,TV2003,Thompson2009,Thompson2010}. Recently, their investigation from the viewpoint of Bowen metric mean dimension has attracted much attention \cite{BR2023, LL, FO}, these results are mainly obtained for systems satisfying the specification or shadowing property.

In this paper, we present another application of Theorem \ref{MainTheorem} to multifractal analysis,  extending the results of Backes \cite{BR2023} and Liu \cite{LL}. In fact,  we  establish an abstract framework for multifractal analysis of general functions in Section \ref{Section_5}.
 For any  constant $a\in L_{\varphi},$
define$$R^C_{\varphi}(a) :  =R_{\varphi}(a)\cap  \{x\in X: C_f(X) \subset \omega_f(x)\}, \  I^C_{\varphi}:=I_{\varphi}\cap   \{x\in X: C_f(X) \subset \omega_f(x)\},$$ 
 and $$H_{\varphi}(a,\varepsilon) : =  \frac{1}{|\ln \varepsilon|} \sup_{\mu \in \mathcal{M}_f(X,\varphi,a)} \inf _{\operatorname{diam}(\xi)<\varepsilon} h_\mu(f, \xi),$$
  where $\mathcal{M}_f(X,\varphi, a) : = \{ \mu \in \mathcal{M}_f( X) : \int \varphi d\mu=a\}$.
  \begin{corollary}\label{cor_multifractual}
  Suppose $(X,f)$ is a dynamical system  with the $g$-almost product property  and $\varphi:  X \rightarrow \mathbb{R}$ is a continuous function. 
  	\begin{enumerate}
  		\item[(1)] For any real number $a \in L_\varphi$, the set $R_\varphi^C(a)$ is not empty and
  		$$
  		\overline{\operatorname{mdim}}^B_M\left(R_\varphi(a),f,d\right)=	\overline{\operatorname{mdim}}^B_M\left(R_\varphi^C(a) ,f,d\right)=\limsup_{\varepsilon \rightarrow 0}H_{\varphi}(a,\varepsilon).
  		$$
  		\item[(2)] $\overline{\operatorname{mdim}}^B_M\left(X,f,d\right) = \overline{\operatorname{mdim}}^B_M\left(\cup_{a \in L_{\varphi}} R_{\varphi}(a),f,d\right)$.
  		\item[(3)] If $I_{\varphi}\neq \emptyset$, then
  		$I_\alpha^C\neq \emptyset$. Moreover,
  		 $$\overline{\operatorname{mdim}}^B_M\left(I_\alpha,f,d\right)=	\overline{\operatorname{mdim}}^B_M\left(I^C_\alpha ,f,d\right)=	\overline{\operatorname{mdim}}^B_M\left(X,f,d\right).$$
  	\end{enumerate}
    \end{corollary}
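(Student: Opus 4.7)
The proof rests on Theorem \ref{MainTheorem} and Corollary \ref{cor-maintheorem}, together with the affineness of the partition entropy $\mu \mapsto h_\mu(f,\xi)$ on $\mathcal{M}_f(X)$. For part (1), non-emptiness of $R_\varphi^C(a)$ is immediate: since $a \in L_\varphi$ the set $\mathcal{M}_f(X,\varphi,a)$ is non-empty by a standard compactness/convexity argument, and picking any $\mu$ there and invoking Theorem \ref{MainTheorem} with $K=\{\mu\}$ produces a non-empty $G_\mu^C \subset R_\varphi^C(a)$, the inclusion following from Birkhoff's theorem. The two quantitative equalities reduce, via $R_\varphi^C(a) \subset R_\varphi(a)$, to matching upper and lower bounds. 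For the lower bound I would start from $\bigcup_{\mu \in \mathcal{M}_f(X,\varphi,a)} G_\mu^C \subset R_\varphi^C(a)$: Corollary \ref{cor-maintheorem} gives $\overline{mdim}^B_M(G_\mu^C,f,d) = \limsup_\varepsilon \frac{1}{|\log\varepsilon|} \inf_{\operatorname{diam}\xi<\varepsilon} h_\mu(f,\xi)$, but the supremum over $\mu$ of these only yields $\sup_\mu \limsup_\varepsilon(\cdot)$, which is a priori weaker than the target $\limsup_\varepsilon H_\varphi(a,\varepsilon) = \limsup_\varepsilon \sup_\mu(\cdot)$. To upgrade, I would re-enter the separated-set construction underlying Theorem \ref{MainTheorem}: along a sequence $\varepsilon_n \to 0$ realising the outer $\limsup$, pick near-maximisers $\mu_n \in \mathcal{M}_f(X,\varphi,a)$ of the inner sup at scale $\varepsilon_n$ and run the g-almost-product gluing inside $G_{\mu_n}^C$ to produce scale-$\varepsilon_n$ Bowen-separated subsets of $R_\varphi^C(a)$ of the required cardinality. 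The matching upper bound would come from $R_\varphi(a) \subset \{x : V_f(x) \subset \mathcal{M}_f(X,\varphi,a)\}$ and a Katok-type covering estimate, bounding the scale-$\varepsilon$ Bowen entropy by $\sup_{\mu \in \mathcal{M}_f(X,\varphi,a)} \inf_{\operatorname{diam}\xi<\varepsilon} h_\mu(f,\xi)$.

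For part (2), the inclusion $\bigcup_{a \in L_\varphi} R_\varphi(a) \subset X$ gives $\leq$ at once. For $\geq$ I would invoke the variational formula $\overline{mdim}^B_M(X,f,d) = \limsup_\varepsilon \frac{1}{|\log\varepsilon|} \sup_{\mu \in \mathcal{M}_f(X)} \inf_{\operatorname{diam}\xi<\varepsilon} h_\mu(f,\xi)$ (either from the metric mean dimension literature or as a byproduct of the construction underlying Theorem \ref{MainTheorem}), and couple it to part (1): along a scale sequence $\varepsilon_n \to 0$ realising the $\limsup$, pick a near-maximising $\mu_n \in \mathcal{M}_f(X)$ and its integral $a_n = \int \varphi\,d\mu_n$; by compactness of $L_\varphi$ one may assume $a_n \to a^* \in L_\varphi$, and part (1) applied at $a^*$, together with continuity of $\mu \mapsto \int \varphi\,d\mu$, yields the matching inequality.

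For part (3), non-emptiness of $I_\varphi$ forces the existence of $\mu_1, \mu_2 \in \mathcal{M}_f(X)$ with distinct $\varphi$-averages. Take $\mu^* \in \mathcal{M}_f(X)$ that (nearly) attains the variational supremum of $\overline{mdim}^B_M(X,f,d)$; at least one of $\mu_1,\mu_2$, call it $\nu$, satisfies $\int \varphi\,d\nu \neq \int\varphi\,d\mu^*$. For small $t > 0$ put $K_t = \{(1-s)\mu^* + s\nu : s \in [0,t]\} \subset \operatorname{conv}\{\mu^*,\nu\}$, a compact connected set. Any $x \in G_{K_t}^C$ has $V_f(x) = K_t$, so the Birkhoff averages of $\varphi$ accumulate at two distinct values and thus $G_{K_t}^C \subset I_\varphi^C$. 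Theorem \ref{MainTheorem} then yields $\overline{mdim}^B_M(G_{K_t}^C,f,d) = \limsup_\varepsilon \frac{1}{|\log\varepsilon|} \inf_{\mu \in K_t} \inf_{\operatorname{diam}\xi<\varepsilon} h_\mu(f,\xi)$; by affineness $h_{(1-s)\mu^* + s\nu}(f,\xi) = (1-s) h_{\mu^*}(f,\xi) + s h_\nu(f,\xi)$, so sending $t \to 0^+$ the lower bound converges to the variational value at $\mu^*$, and hence to $\overline{mdim}^B_M(X,f,d)$. Combined with the trivial $\overline{mdim}^B_M(I_\varphi^C,f,d) \leq \overline{mdim}^B_M(I_\varphi,f,d) \leq \overline{mdim}^B_M(X,f,d)$, both equalities follow.

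The genuinely delicate step is the $\sup$-$\limsup$ interchange in the lower bound of part (1): the naive one-measure argument only produces $\sup_\mu \limsup_\varepsilon(\cdot)$, and re-opening the g-almost-product separated-set construction of Theorem \ref{MainTheorem} to let the target measure $\mu_n$ drift with the scale $\varepsilon_n$ is the main technical obstacle. The analogous $t \to 0^+$ limit in part (3) is a secondary but related difficulty, and it is precisely the affineness of $\mu \mapsto h_\mu(f,\xi)$ that makes it tractable.
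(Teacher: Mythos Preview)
Your overall strategy matches the paper's: parts (1) and (3) are proved exactly as you outline, via the inclusions $G_\mu^C\subset R_\varphi^C(a)$ and $G_{K}^C\subset I_\varphi^C$ for a short segment $K$ starting at a near-optimal measure, combined with Theorem~\ref{MainTheorem} and the affineness of $\mu\mapsto h_\mu(f,\xi)$. You also correctly isolate the $\sup$--$\limsup$ interchange as the crux. The resolution the paper uses is the one you gesture at (``re-enter the separated-set construction''): the proof of Proposition~\ref{prop-G^C-lower} actually yields the \emph{pointwise} bound $h_{\mathrm{top}}^B(G_K^C,f,\varepsilon^*/8)\ge \inf_{\mu\in K}\inf_{\operatorname{diam}\xi<4\varepsilon^*}h_\mu(f,\xi)-\gamma$ for every small $\varepsilon^*$, so one may take $\mu$ (and, in part~(3), the segment $K$) to depend on the scale and only afterwards pass to the $\limsup$. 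This is what makes both (1) and (3) go through; your sketch for (3) has the same hidden scale-dependence issue as the paper's own write-up, and the pointwise estimate is again the fix.

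The one place where the paper proceeds differently is part~(2). Your compactness argument (pass to $a_n\to a^*$ and apply part~(1) at $a^*$) needs an additional convex-adjustment step to move each $\mu_n$ into $\mathcal{M}_f(X,\varphi,a^*)$ without losing entropy, which you do not spell out. The paper bypasses this entirely: since $\bigcup_\mu G_\mu\subset R_\varphi=\bigcup_a R_\varphi(a)$, the set $R_\varphi$ has full $\mu$-measure for every $\mu\in\mathcal{M}_f(X)$, and Lemma~\ref{lemma_G_mu} gives the pointwise inequality $\inf_{\operatorname{diam}\xi<\varepsilon}h_\mu(f,\xi)\le h_{\mathrm{top}}^B(R_\varphi,f,\varepsilon)+\varepsilon$; taking $\sup_\mu$ first and then $\limsup_\varepsilon$, together with the variational principle (Theorem~\ref{Thm_variationprinciple}), gives $\overline{\operatorname{mdim}}_M^B(R_\varphi,f,d)\ge\overline{\operatorname{mdim}}_M^B(X,f,d)$ directly. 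This route is shorter and, notably, does not invoke the g-almost product property at all.
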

\begin{remark}
   Similar results to Corollary \ref{cor_multifractual} also hold for the lower metric mean dimension by replacing $\overline{\operatorname{mdim}}_M^B$ with $\underline{\operatorname{mdim}}_M^B$ and $\limsup_{\varepsilon \rightarrow 0}$ with $\liminf_{\varepsilon \rightarrow 0}$. For brevity, the detailed statement is omitted.
\end{remark}
\begin{remark}
	The part for $I_{\varphi}$ has been investigated by Liu and Liu \cite{LL} for the systems with  almost weak specification property. 	The part for $R_{\varphi}(a)$ has been studied by  Backes and Rodrigue \cite{BR2023} for the systems with specification property.  
	\end{remark}

\textbf{Organization of this paper.}
In preparation for proving the main results, we recall some notations and definitions in Section \ref{Section_2}.
The proofs of Theorems \ref{Maintheorem_G_mu} and \ref{MainTheorem} are presented in Section \ref{Section_3}.
 In Section \ref{Section_5}, we give an application to multifractal analysis of general continuous functions.

\section{Preliminaries}\label{Section_2}

Let $(X, f)$ be a dynamical system, $d$ is the metric on $X$. Given $\varepsilon>0,$
we denote by 
$$B(x, \varepsilon): =\left\{y \in X: d(x, y)<\varepsilon\right\}$$
the $\varepsilon$-ball of $x$.
For  $n \in \mathbb{N}$,  the Bowen metric $d^f_n$ on $X$ is 
$$d^f_n(x, y) : =\max _{0 \leq i \leq n-1}\left\{d \left(f^i x, f^i y\right)\right\}.$$ 
We denote by
$$B_n(x, \varepsilon,f): =\left\{y \in X: d^f_n(x, y)<\varepsilon\right\}$$
the $(n, \varepsilon)$-ball of $x$.
When not specified, we abbreviate $d_n^f(x,y)$, $B(x, \varepsilon)$ and $ B_n(x,\varepsilon,f)$ as $d_n(x,y)$, $B(x, \varepsilon)$ and $ B_n(x,\varepsilon)$, respectively.

Let $C(X)$ denote the set of continuous functions on $X$ and $||\phi||=\max \{|\phi(x)| : x \in X\}$ be the  norm  of $\phi \in C(X)$.
We set
$$
\langle \phi, \mu \rangle : =\int_{X} \phi d \mu.
$$
There exists a sequence $\{\phi_k\}_{k \in \mathbb{N}}$  that is a dense subset of $C(X)$, and $0 \leq \phi_{k}(x) \leq 1$ for all $k\in\mathbb{N}$ and $x \in X$. The metric for the weak$^{*}$ topology on $\mathcal{M}(X)$ is defined as
$$
\rho(\mu, \nu) : =\sum_{k \geq 1} 2^{-k}\left|\left\langle \phi_{k}, \mu\right\rangle-\left\langle \phi_{k}, \nu \right\rangle\right|,
$$
obviously,
\begin{equation}\label{eq-d}
\rho(\mu, \nu) \leq \sum_{k \geq  1}2^{-k+1} \leq 2.
\end{equation}
We denote by
$$B(\mu, \varepsilon) : =\left\{\nu\in \mathcal{M}(X) : \rho \left(\mu, \nu \right)<\varepsilon\right\} $$ 
the $\varepsilon$-ball of $\mu$.

\subsection{$g$-almost product property}\label{Section_g_almostproductproperty}
The $g$-almost product property was first introduced by  Pfister and Sullivan,   which is weaker than specification property \cite[Proposition 2.1]{PS2007}.  Let $g: \mathbb{N} \rightarrow \mathbb{N}$ be a given non-decreasing unbounded map with 
$$
g(n)<n  \text { and }  \lim _{n \rightarrow \infty} \frac{g(n)}{n}=0,
$$
such function $g$ is called a blowup function.
Let $x \in X$ and $\varepsilon>0$, the $g$-blowup of $B_n(x, \varepsilon)$ is the closed set
$$B_n(g; x, \varepsilon) : =\left\{y \in X: \exists \Lambda \subset \Lambda_n,\sharp (\Lambda_n \backslash \Lambda ) \leq g(n),\ \max \left\{d\left(f^j x, f^j y\right): j \in \Lambda\right\} \leq \varepsilon\right\}.$$

\begin{Def}\label{Def_galmost}
A dynamical system $(X, f)$ is said to have the $g$-almost product property with blowup function $g$, if there exists a non-increasing function $m: \mathbb{R}^{+} \rightarrow \mathbb{N}$ such that for any $k \in \mathbb{N}$, any $x_1 , \ldots, x_k \in X$, any positive number $\varepsilon_1, \ldots, \varepsilon_k$, and any integers $n_1 \geq m\left(\varepsilon_1\right), \ldots, n_k \geq m\left(\varepsilon_k\right)$, 
	$$
	\bigcap_{j=1}^k f^{-M_{j-1}} B_{n_j}\left(g; x_j, \varepsilon_j\right) \neq \emptyset,
	$$
	where $M_0 =0, M_i =n_1+\cdots+n_i, i=1, \ldots, k-1$.
\end{Def}

A point $x\in X$ is almost periodic, if for every open neighborhood $U$ of $x$, there exists $N \in \N^+$ such that for every $n\in\N$ there is $n\leq k\leq n+N$ such that $f^kx \in U$. We denote the set of almost periodic points by $AP(X)$. 
The following proposition reveals that the almost periodic points play an important role in studying the measure center for the systems with  $g$-almost product property.
\begin{Prop}\label{lemma-APdense}\cite[Proposition 2.11]{HTZ2023}
	Suppose  $(X,f)$ is  a dynamical system with $g$-almost product property. Then the almost periodic set $AP(X)$ is dense in $C_f(X)$.
\end{Prop}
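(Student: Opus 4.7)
The goal is to show that for each $x \in C_f(X)$ and each $\varepsilon>0$, there exists an almost periodic point $y$ with $d(x,y)<\varepsilon$. Unwinding the definition of the measure center, I would begin by reducing to the case that $x \in S_\nu$ for some ergodic measure $\nu \in \mathcal{M}_f^e(X)$: from $x \in C_f(X) = \overline{\bigcup_\mu S_\mu}$ I first approximate $x$ by a point of some $S_\mu$, and then, by the ergodic decomposition of $\mu$, further reduce to an ergodic component whose support contains a nearby point.

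The heart of the argument is a shadowing construction built on the g-almost product property. Because $\nu$ is ergodic and $x \in S_\nu$, we have $\nu(B(x,\varepsilon/4))>0$, so by Birkhoff's ergodic theorem there exist $\nu$-generic points $z$ whose orbit visits $B(x,\varepsilon/4)$ with positive frequency. From such a $z$ I would extract orbit segments $(z_k, n_k)$ with $z_k, f^{n_k}(z_k) \in B(x,\varepsilon/4)$ and with the block lengths $n_k$ chosen large enough that $n_k \geq \mathrm{m}(\varepsilon/4)$ (the blowup-control function from the definition of g-almost product) and $g(n_k)/n_k$ is negligible. Applying the g-almost product property inductively to glue these segments produces a point $y \in B(x,\varepsilon/2)$ whose orbit $\varepsilon/4$-shadows each segment modulo a bad-time set of size at most $g(n_k)$ per block.

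To upgrade the recurrent point $y$ into an almost periodic one, I would nest the construction so that $\overline{\operatorname{orb}(y)}$ becomes minimal: at stage $k$, arrange that every sufficiently long orbit segment of $y$ is $\varepsilon_k$-dense in the eventual orbit closure, by requiring each new block to replay a fixed finite collection of previously-specified prefixes. With the blowup function satisfying $g(n)/n \to 0$, the shadowing can be made consistent across all scales, and the limit point remains within $\varepsilon$ of $x$.

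The principal obstacle is precisely this nesting step: one must keep the return structure syndetic (bounded gaps) at every scale simultaneously, while still keeping $y$ close to $x$, since g-almost product only provides shadowing up to a controlled bad set rather than exact specification. A naive alternative, namely applying Zorn's lemma to $S_\nu$ to extract a minimal invariant subset, produces an almost periodic point inside $S_\nu$ but not necessarily close to $x$, so the explicit iterative shadowing construction above appears essential.
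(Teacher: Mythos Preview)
The paper gives no proof of this proposition; it simply quotes \cite[Proposition~2.11]{HTZ2023}, so there is no in-paper argument to compare against.

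Your reduction to an ergodic $\nu$ with $x\in S_\nu$ and your use of the $g$-almost product property to glue orbit segments near $x$ are both sound. The gap is exactly where you locate it: the ``nesting'' step meant to force almost periodicity is only sketched, and building a minimal orbit closure by hand through successive refinements is genuinely intricate. But you dismiss the Zorn's-lemma route prematurely. The trick is to apply it not to $S_\nu$ but to a small $f^n$-invariant closed set manufactured from the $g$-almost product property. Choose $z\in B(x,\varepsilon/4)$ and $n\ge m(\varepsilon/4)$ large enough that the segment $z,fz,\dots,f^{n-1}z$ visits $B(x,\varepsilon/4)$ more than $g(n)$ times (possible by Birkhoff's theorem for $\nu$, since $\nu(B(x,\varepsilon/4))>0$ and $g(n)/n\to 0$). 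Then
\[
W=\bigcap_{k\ge 0}f^{-kn}B_n(g;z,\varepsilon/4)
\]
is a nested intersection of the nonempty closed sets furnished by $g$-almost product, hence nonempty, closed, and $f^n$-forward-invariant; it therefore contains an $f^n$-minimal set $M$, all of whose points are almost periodic for $f$. For any $y\in M\subseteq B_n(g;z,\varepsilon/4)$ the good-time set misses at most $g(n)$ indices, so by pigeonhole it contains some index $j$ with $f^jz\in B(x,\varepsilon/4)$, whence $f^jy$ is almost periodic with $d(f^jy,x)<\varepsilon/2$. This single application of Zorn, to a set already localized near $x$, replaces your nested construction entirely.
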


\subsection{Measure-theoretic entropy }
Let $\mu \in \mathcal{M}_f(X)$. We say that $\xi=\left\{C_1, \ldots, C_k\right\}$ is a measurable partition of $X$ if each $C_i$ is a measurable set, $\mu\left(X \backslash \cup_{i=1}^k C_i\right)=0$ and $\mu\left(C_i \cap C_j\right)=0$ for every $i \neq j$. The entropy of $\xi$ with respect to $\mu$ is given by
$$
H_\mu(\xi) : =-\sum_{i=1}^k \mu\left(C_i\right) \ln \mu\left(C_i\right).
$$
The entropy of $\xi$ given a measurable partition $\zeta=\{A_1,\cdots,A_p\}$ is the number
\begin{equation}\label{eq_conditional}
	H_{\mu}(\xi | \zeta) :  = -\sum_{i, j} \mu \left(C_i \cap A_j\right) \ln \frac{\mu \left(C_i \cap A_j\right)}{\mu \left(A_j\right)}.
\end{equation}
Let $\xi^n =\bigvee_{j=0}^{n-1} f^{-j} \xi$, the metric entropy of $f$ with respect to $\xi$ is given by
$$
h_\mu(f, \xi) : =\lim _{n \rightarrow+\infty} \frac{1}{n} H_\mu\left(\xi^n\right) .
$$
Fix $\varepsilon>0$, one has $\inf_{\operatorname{diam}(\xi)<\varepsilon}h_{\mu}(f,\xi)<+\infty$ (see  \cite[Theorem 2]{Shi2022}). 
The metric entropy of $f$ respect to $\mu$ is 
$$
h_\mu(f) : =\sup _{\xi} h_\mu(f, \xi),
$$
where the supremum is taken over all finite measurable partitions $\xi$ of $X$.  Here we recall some properties of the measure-theoretic entropy.
\begin{Prop}\cite{water2001}\label{Prop_propertyofh}
	 Let  $(X, \mathcal{ B},\mu )$ be the probability space and $f$ be a measure preserving transformation. If $\xi,
	 \eta,\gamma$ are measurable partitions of $X$, then
	\item[(1)] 
	$ H_\mu(\eta \vee \gamma \mid \xi)  \leq H_\mu(\eta \mid \xi)+H_{\mu}(\gamma \mid \xi).$
	\item[(2)]
	$ H_\mu\left(f^{-1} \eta \mid f^{-1} \xi\right)  =H_\mu(\eta \mid \xi).$
	\item[(3)]
	$ h_\mu(f, \eta)  \leq h_\mu(f, \xi)+H_\mu(\eta \mid \xi).$
	\item[(4)] $h_{\mu}\left(f^k, \bigvee_{i=0}^{k-1} f^{-i} \xi \right)=k h_{\mu}(f, \xi)$ if $k>0$.

\end{Prop}


\subsection{Pfister and Sullivan’s entropy}
For $\delta>0$ and $\varepsilon>0$, two points $x, y\in X$ are $(\delta, n, \varepsilon)$-separated if
$$
\sharp \left\{0 \leq j \leq n-1: d\left(f^j x, f^j y\right)>\varepsilon\right\} \geq \delta n.
$$
A subset $E\subseteq X$ is $(\delta,n,\varepsilon)$-separated  if any pair of distinct points in $E$ are $(\delta,n,\varepsilon)$-separated. A set $E \subset X$ is $(n, \varepsilon)$-separated if $d_n(x, y) > \varepsilon$ for every $x \neq y \in E.$
Let $F\subseteq \mathcal{M}_f(X)$ be a neighborhood and $n\in\N$,  define
$$X_{n,  F} : =\{x\in X:\mathcal{E}_{n}(x)\in F\}. $$
Let $N(F, \delta, n,\varepsilon)$ denote the maximal cardinality of the $(\delta ,n,\varepsilon)$-separated subset of $X_{n,F}$ and
$N(F,n,\varepsilon)$ be the maximal cardinality of the $(n,\varepsilon)$-separated subset of $X_{n,F}.$ Define
$$ \overline{PS}(\mu, \varepsilon) : =\inf _{F \ni \mu} \limsup _{n \rightarrow \infty} \frac{1}{n} \ln N(F,n,\varepsilon),\ \underline{PS}(\mu, \varepsilon) : =\inf_{F \ni \mu} \liminf _{n \rightarrow \infty} \frac{1}{n} \ln N(F,n,\varepsilon),$$
and
$$ \overline{PS}(\mu,\delta,\varepsilon) : =\inf_{F \ni \mu}
\limsup_{n \rightarrow \infty} \frac{1}{n} \ln N(F, \delta, n, \varepsilon),\ \underline{PS}(\mu,\delta,\varepsilon) : =\inf_{F \ni \mu}\liminf_{n \rightarrow \infty} \frac{1}{n} \ln N(F, \delta, n, \varepsilon).$$
 \begin{Thm}\cite[Corollary 3.2]{PS2007}\label{Thm_entropyformula_PS}
 	 Let $(X, f)$ be a dynamical system and $\mu\in \mathcal{ M}_f^e(X)$. Then 
 	$$
 	\begin{aligned}
 	h_{\mu}(f) &=\lim _{\varepsilon \rightarrow 0} \lim _{\delta \rightarrow 0}\overline{PS}(\mu,\delta,\varepsilon)
 	=\lim _{\varepsilon \rightarrow 0} \lim_{\delta \rightarrow 0}  \underline{PS}(\mu,\delta,\varepsilon)\\
 	&= \lim _{\varepsilon \rightarrow 0} \overline{PS}(\mu, \varepsilon)= \lim _{\varepsilon \rightarrow 0} \underline{PS}(\mu, \varepsilon).
 	\end{aligned}
 	$$
 \end{Thm}

\subsection{Metric mean dimension}\label{subsection_metricmeandimension}
Let $s(f, n, \varepsilon)$ denote the maximal cardinality of all $(n, \varepsilon)$-separated subsets of $X$, which is  finite since $X$ is compact.
The upper and lower metric mean dimension of $f$ with respect to $d$ are defined respectively by
\begin{equation}\label{def-uppermetricmeandimension-0}
	\overline{\operatorname{mdim}}_{M}(X, f, d) : =\limsup _{\varepsilon \rightarrow 0} \frac{h(f, \varepsilon)}{|\ln \varepsilon|},
\end{equation}
and
\begin{equation}\label{def-lowermetricmeandimension-0}
	\underline{\operatorname{mdim}}_{M}(X, f, d) : =\liminf _{\varepsilon \rightarrow 0} \frac{h(f, \varepsilon)}{|\ln \varepsilon|},
\end{equation}
where $h(f, \varepsilon)=\limsup _{n \rightarrow \infty} \frac{1}{n} \ln s(f, n, \varepsilon).$
In the case when $\underline{\operatorname{mdim}}_{M}(X, f, d) = \overline{\operatorname{mdim}}_{M}(X, f, d)$ this common value is called the metric mean dimension of $f$ with respect to $d$ and is denoted simply by $\operatorname{mdim}_{M}(X, f, d)$.
The topological entropy of $f$ is 
$ h_{top}(f)=\lim _{\varepsilon \rightarrow 0} h(f, \varepsilon). $
Consequently, $$\overline{\operatorname{mdim}}_{M}(X, f, d)=\underline{\operatorname{mdim}}_{M}(X, f, d)=0,$$ whenever the topological entropy of $f$ is finite.

The following variational principle for the metric mean dimension was established by Gutman and Śpiewak.

\begin{Thm}\cite[Theorem 3.1]{GS2021}\label{Thm_variationprinciple}
	Let $(X, d, f)$ be a dynamical system. Then
	$$
	\overline{\operatorname{mdim}}_M(X, f, d)=\limsup _{\varepsilon \rightarrow 0} \frac{1}{|\ln \varepsilon|} \sup _{\mu \in \mathcal{M}_f(X)} \inf_{\operatorname{diam}(\xi)<\varepsilon} h_\mu(f, \xi),
 	$$
	and
	$$
	\underline{\operatorname{mdim}}_{M}(X, f, d)=\liminf _{\varepsilon \rightarrow 0} \frac{1}{|\ln \varepsilon|} \sup_{\mu \in \mathcal{M}_f(X)} \inf_{\operatorname{diam}(\xi)<\varepsilon} h_\mu(f, \xi).
	$$
\end{Thm}

\subsection{Bowen metric mean dimension}\label{subsection_Bowenmetricmeandimension}
Given a non-empty set $Z \subset X$, let
$$
m(Z, s, N, \varepsilon,f) : =\inf _{\Gamma}\left\{\sum_{i \in I} \exp \left(-s n_i\right)\right\},
$$
where the infimum is taken over all finite or countable collection $\Gamma(Z)=\left\{B_{n_i}\left(x_i, \varepsilon,f \right)\right\}_{i \in I}$ such that $Z \subset$ $\cup_{i \in I} B_{n_i}\left(x_i, \varepsilon,f \right)$ and $\min \left\{n_i: i \in I\right\} \geq N$. Note that $m(Z, s, N, \varepsilon,f)$ does not decrease as $N$ increases,  therefore,  the following limit exists
$$
m(Z, s, \varepsilon,f) : =\lim _{N \rightarrow \infty} m(Z, s, N, \varepsilon,f) .
$$
There exists a  number $s_0 \in[0,+\infty)$ such that $m(Z, s, \varepsilon,f )=0$ for every $s>s_0$ and $m(Z, s, \varepsilon,f)=+\infty$ for every $s<s_0$\cite[Proposition 1.2]{Pesin1997}. Define
$$ h_{\text {top }}^B(Z, f, \varepsilon) : =\inf \{s: m(Z, s, \varepsilon,f)=0\}=\sup \{s: m(Z, s, \varepsilon, f)=+\infty\}.$$
Note that $m(Z, h_{\text {top }}^B(Z, f, \varepsilon), \varepsilon,f)$  may take the value $+\infty, 0$ or some positive finite number.  Bowen topological entropy is defined by 
$$h_{\text {top }}^B(Z, f) : =\lim _{\varepsilon \rightarrow 0} h_{\text {top }}^B(Z, f, \varepsilon).$$
  Bowen upper and lower
metric mean dimension of $f$ on $Z$ with respect to $d$ are defined by
\begin{equation}\label{Def-uppermetricmeandimension}
	\overline{\operatorname{mdim}}_{M}^B(Z, f, d) : =\limsup _{\varepsilon \rightarrow 0} \frac{h_{\text {top }}^B(Z, f, \varepsilon)}{|\ln \varepsilon|},
\end{equation}
and
\begin{equation}\label{Def-lowmetricmeandimension}
	\underline{\operatorname{mdim}}_{M}^B(Z, f, d) : =\liminf _{\varepsilon \rightarrow 0} \frac{h_{\text {top }}^B(Z, f, \varepsilon)}{|\ln \varepsilon|},
\end{equation}
respectively. 
In the case when $Z = X$, the metric mean dimension and Bowen metric mean dimension given above actually coincide.
Here we give some basic properties of  Bowen metric mean dimension.
\begin{Prop}\label{Prop_propertyofdim}
	(1) If $Z_1 \subseteq Z_2$ are non-empty, then
	$$
	\overline{\operatorname{mdim}}_{M}^B\left(Z_1, f, d\right) \leq \overline{\operatorname{mdim}}_{M}^B\left(Z_2, f, d\right)\ \text{and} \ \underline{\operatorname{mdim}}_{M}^B\left(Z_1, f, d\right) \leq \underline{\operatorname{mdim}}_{M}^B\left(Z_2, f, d\right) .
	$$
	(2) For any $\varepsilon>0$, any $m\in \N$ and  any $Z\subseteq X$, one has
	$$ h_{top}^B(Z, f^m,\varepsilon)\leq  m h_{top}^B(Z, f,\varepsilon). $$ 
\end{Prop}
\begin{proof}
 The property (1) follows directly from the fact that $h_{top}^B(Z,f,\varepsilon)$ is a dimension characteristic \cite[Theorem 1.1]{Pesin1997}.
	
	(2) Fix  $\varepsilon>0$ and  $N\in \N$,
    $\Gamma_{N, f}$ and $\Gamma_{N,  f^m}$  denote  the collection of all finite or countable covers of $Z$ by sets of the form $\{B_n(x, \varepsilon,f)\}$ and  $\{B_n(x, \varepsilon,f^m)\}$ with $n \geq N$, respectively. 
	Let  $\{ B_{n_i}(x_i,\varepsilon,f) \}_{i\in I} \in \Gamma_{Nm,f}$ be the countable $(n_i,\varepsilon)$ Bowen ball collection of $(X,f)$  with  $ Z \subseteq  \cup_{i\in I}B_{n_i}(x_i,\varepsilon,f)$ with $\min\{n_i\}_{i\in I} > Nm$.
	For each $i\in \N$, let $n_i -1 = p_i m +q_i$, where $ 0 \leq q_i \leq m-1$ and $p_i \geq N$. Since $$ \cap_{j=0}^{n_i-1} f^{-j}B(f^j x_i,\varepsilon) \subseteq  \cap_{j=0}^{p_i} f^{-j m}B(f^{jm} x_i,\varepsilon),$$
 it follows that $$B_{n_i}(x_i,\varepsilon,f)  \subseteq B_{p_i+1}(x_i,\varepsilon,f^m).$$ Thus, the collection $\{B_{p_i+1}(x_i,\varepsilon,f^m) \}_{i\in I}$ also is the  open cover of $Z$, that is, $$\{B_{p_i+1}(x_i,\varepsilon,f^m) \}_{i\in I} \in \Gamma_{N,f^m}.$$
	Note that
	$$\frac{n_i}{m}=p_i+\frac{q_i+1}{m} \leq p_i +1,$$
	let $\alpha=h_{top}^B(Z,f^m,\varepsilon)$,
	then
	$$
	\begin{aligned}
		m\left(Z, \frac{\alpha}{m},m N,  \varepsilon,  f\right) & =\inf _{\Gamma_{mN,f}}\left\{\sum_{i\in I} \exp \left(-\frac{\alpha}{m}  n_i \right)\right\} \\
		& \geq \inf _{ \Gamma_{N,f^m} } \left\{\sum_{i\in I} \exp \left(-\alpha( p_i +1)\right)\right\} \\
		& =m(Z, \alpha, N,  \varepsilon,  f^m) .
	\end{aligned}
	$$
	Let $N \rightarrow \infty$, we have
	$$
	m\left(Z, \frac{\alpha}{m}, \varepsilon, f \right) \geq m(Z, \alpha, \varepsilon, f^m) .
	$$
	So,
	$$
	h_{\text {top }}^B(Z, f, \varepsilon) \geq \frac{ h_{\text {top }}^B(Z, f^m, \varepsilon)}{m}.
	$$
\end{proof}


\section{Proofs of Theorem \ref{Maintheorem_G_mu} and Theorem \ref{MainTheorem}}\label{Section_3}

\subsection{Some lemmas}

\begin{Lem}\label{prop_infequal}
	Soppose $(X, f)$ is a dynamical system and $\xi$ is a finite measurable partition of $X$.  For any $\mu \in \mathcal{M}_f(X)$ and any $\varepsilon>0$, one has  $$\inf_{\operatorname{diam}( \xi )< \varepsilon,\mu(\partial \xi)=0 }h_{\mu}(f,\xi)=\inf_{\operatorname{diam} (\xi )< \varepsilon}h_{\mu}(f,\xi).$$
\end{Lem}
\begin{proof}
	Obviously, $\inf_{\operatorname{diam}( \xi )< \varepsilon,\ \mu(\partial \xi)=0 }h_{\mu}(f,\xi)\geq \inf_{\operatorname{diam} (\xi )< \varepsilon}h_{\mu}(f,\xi)$. So we only need to prove the reverse inequality.
	Fix $\varepsilon>0$, let $\mathcal{P}=\{P_1,P_2,\cdots, P_{n}\}$ be a finite partition with $\operatorname{diam}(\mathcal{P})<\varepsilon$ and $\mu(\partial \mathcal{P})=0$ and  $\mathcal{A}=\{A_1,\cdots,A_{k}\}$ be a partition of $X$ with $\operatorname{diam}(\mathcal{A})<\varepsilon$,  
	\cite[Lemma 8.5]{water2001} guarantees the existence of such $\mathcal{P}$ and $\mathcal{A}$.
	Let $\phi(x)=-x\ln x: [0,1] \rightarrow [0,1]$, for any $\eta>0$, there exists $\delta>0$ such that $\phi(x) <\eta$, if
	\begin{equation}\label{eq_xlnx1}
		0<x<\frac{k\delta}{\min_{1\leq j\leq k}\{\mu(A_j)\}},
	\end{equation}
or
	\begin{equation}\label{eq_xlnx2}
	0<1-x< \frac{\delta}{\min_{1\leq j\leq k}\{\mu(A_j)\}}. 
	\end{equation}
	Since $\mu$ is a regular measure, there exists a closed subset $B_i \subset A_i$ satisfying $\mu(A_i\backslash B_i)<\frac{\delta}{2}$. Thus, $\mu(A_i \Delta B_i) <\frac{\delta}{2}$ for all $i=1,\cdots,k$.  
	Denote $B_0=X\backslash \cup_{i=1}^kB_i$. Then we obtain a partition $\mathcal{B}=\{B_0,B_1,\cdots,B_k\}$.  
	Let $\alpha=\min_{i,j\neq 0,i\neq j}\{ d(B_i,B_j)\}>0$.  
	Since $\mu$ is regular, we can choose $B_i\subset U_i\subset \overline{U_i}\subset B(B_i,\alpha/2)$ such that $\mu(U_i\Delta B_i)<\frac{\delta}{2}$ and $\operatorname{diam}(U_i) <\varepsilon$, $i=1,\cdots,k$,  
	where $B(B_i,\alpha)=\{y\in X: d(B_i,y)<\alpha\}$.  
	Fix $i\in \mathbb{N}$,  let $\tau=\operatorname{dist}(B_i,X\backslash U_i)>0$.  
	For any $n>\frac{1}{\tau}$, there are at most $n$ balls $B(B_i,t)$ ($t<\tau$) with $\mu(\partial B(B_i,t)) \geq \frac{1}{n}$.  
	Then we can choose $B_i\subset V_i \subset \overline{V_i} \subset U_i$ such that $\mu(\partial V_i)=0$, $\mu(V_i\Delta B_i)<\frac{\delta}{2}$, and $\operatorname{diam}(V_i) <\varepsilon$ for $i=1,\cdots,k$.  
	Let $C_i=\overline{V_i}$ for $i=1,\cdots,k$ and $C_0=X\backslash \cup_{i=1}^k C_i$.  
	Then there is a partition $\mathcal{C}=\{C_0,C_1,\cdots,C_k\}$ satisfying  
	$$\mu(C_i \Delta A_i)\leq \mu(B_i \Delta U_i)+\mu(B_i\Delta A_i) <\delta, \quad \mu(\partial C_i)=0, \ i=1,\cdots,k,$$  
	and  
	$$\mu(C_0)= \mu\left( \cup_{j=1}^k A_j  \backslash \cup_{i=1}^k C_i \right) \leq \mu\left( \cup_{i=1}^k (A_i \Delta C_i) \right)<k \delta, \quad \mu(\partial C_0)=0.$$
	Let $\mathcal{C}_0|_{\mathcal{P}}=\{P_1\cap C_0, P_2\cap C_0,\cdots, P_{n}\cap C_0\}$.  
	Then $\operatorname{diam}(P_j\cap C_0)\leq \operatorname{diam}(P_j)<\varepsilon$ and $\mu(\partial (P_j\cap C_0))=0$.  
	Thus, we find a finite partition  
	$$\mathcal{D}= \{P_1\cap C_0, P_2\cap C_0,\cdots, P_{n}\cap C_0, C_1,\cdots,C_{k}\}$$  
	with $\operatorname{diam}(\mathcal{D})<\varepsilon$ and $\mu(\partial \mathcal{D})=0$.  
	
	By Proposition \ref{Prop_propertyofh} (1), we have
	$$h_{\mu}(f,\mathcal{D})\leq h_{\mu}(f,\mathcal{A})+H_{\mu}(\mathcal{D} | \mathcal{A}).$$  
    By equation (\ref{eq_conditional}),
	\begin{equation*}
		\begin{aligned}
			H_{\mu}(\mathcal{D} | \mathcal{A})  
			&=  \sum_{j=1}^k \mu(A_j) \sum_{i=1}^k  \phi\!\left(\frac{\mu(A_j \cap C_i)}{\mu(A_j)}\right)
			+  \sum_{j=1}^k \mu(A_j)\sum_{l=1}^n \phi\!\left(\frac{\mu(A_j \cap C_0\cap P_l)}{\mu(A_j)}\right).
		\end{aligned}
	\end{equation*}
     If $1 \leq i=j \leq k$, then  
	$$\frac{\mu(A_j\cap C_j)}{\mu(A_j)} = \frac{\mu(A_j)-\mu(A_j\backslash C_j)}{\mu(A_j)} \geq 1-\frac{\delta}{\mu(A_j)},$$  
	by (\ref{eq_xlnx2}), we have $\phi\!\left(\frac{\mu(A_j\cap C_j)}{\mu(A_j)}\right) <\eta.$  
	If $1 \leq i\neq j\leq k$, then  
	$$\frac{\mu(A_j\cap C_i)}{\mu(A_j)}\leq \frac{\delta}{\mu(A_j)},$$  
	and by (\ref{eq_xlnx1}), we have $\phi\!\left(\frac{\mu(A_j\cap C_i)}{\mu(A_j)}\right)<\eta.$  
	 If $i=0$ and $1 \leq j \leq k,1 \leq l \leq n$, since $\frac{\mu(A_j \cap C_0\cap P_l)}{\mu(A_j)}<\frac{k\delta}{\mu(A_j)}$,  
	by (\ref{eq_xlnx1}), we have $\phi\left(\frac{\mu(A_j \cap C_0\cap P_l)}{\mu(A_j)}\right) <\eta.$  
	Hence,
	$$h_{\mu}(f,\mathcal{D}) \leq h_{\mu}(f,\mathcal{A})+(k+n)\eta.$$
	Since $\eta$ is arbitrary, let $\eta \rightarrow 0$ we obtain  
	$$h_{\mu}(f,\mathcal{D})\leq  h_{\mu}(f,\mathcal{A})\leq  \inf_{\operatorname{diam}(\mathcal{A}) < \varepsilon} h_{\mu}(f,\mathcal{A}).$$  
	Thus,  
	$$\inf_{\operatorname{diam}(\xi) < \varepsilon,\ \mu(\partial \xi)=0 }h_{\mu}(f,\xi)\leq \inf_{\operatorname{diam}(\xi) < \varepsilon}h_{\mu}(f,\xi).$$

\end{proof}

\begin{Lem}\label{lemma-variationalequation}
Suppose $(X, f)$ is a dynamical system. Let $\left\{E_n\right\}$ be a sequence of $(n, \varepsilon)$-separated subsets for $X$ and define
	$$
	v_n : =\frac{1}{n\sharp E_n } \sum_{x \in E_n} \sum_{k=0}^{n-1} \delta_{f^k x},
	$$
	assume that $\lim_{n\rightarrow +\infty} v_n=\mu$, then for any $\varepsilon>0$, one has
	$$
	\limsup _{n \rightarrow \infty} \frac{1}{n} \ln \sharp E_n \leq \inf_{\operatorname{diam} (\xi) < \varepsilon }h_{\mu}(f,\xi).
	$$
\end{Lem}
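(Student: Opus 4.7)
The plan is a classical Misiurewicz blocking argument adapted to the partition-based formulation that appears in the Bowen metric mean dimension. First I would observe that $\mu$ is automatically $f$-invariant, since for every $\phi\in C(X)$,
$$|\langle f_*v_n-v_n,\phi\rangle|=\frac{1}{n\sharp E_n}\Bigl|\sum_{x\in E_n}(\phi(f^nx)-\phi(x))\Bigr|\leq \frac{2\|\phi\|_\infty}{n}\to 0.$$
By Proposition~\ref{prop_infequal} it then suffices to show $\limsup_n \frac{1}{n}\ln\sharp E_n\leq h_\mu(f,\xi)$ for every finite measurable partition $\xi$ satisfying $\operatorname{diam}\xi<\varepsilon$ and $\mu(\partial\xi)=0$ (such $\xi$ exist by Lemma~\ref{lemma-zeropartition}).

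Fix such a $\xi$ and set $\sigma_n:=\frac{1}{\sharp E_n}\sum_{x\in E_n}\delta_x$, so that $v_n=\frac{1}{n}\sum_{k=0}^{n-1}f^k_*\sigma_n$. Because $E_n$ is $(n,\varepsilon)$-separated while every atom of $\xi^n:=\bigvee_{k=0}^{n-1}f^{-k}\xi$ has $d_n$-diameter at most $\operatorname{diam}\xi<\varepsilon$, each atom of $\xi^n$ contains at most one point of $E_n$, whence $H_{\sigma_n}(\xi^n)=\ln\sharp E_n$. For any $q\in\mathbb{N}$ with $q<n$ and each $j\in\{0,\ldots,q-1\}$ I decompose
$$\xi^n=\bigvee_{i=0}^{a_j-1}f^{-(j+iq)}\xi^q\ \vee\ \bigvee_{k\in S_j}f^{-k}\xi,\qquad a_j:=\lfloor(n-j)/q\rfloor,\ |S_j|\leq 2q.$$
Subadditivity (Proposition~\ref{Prop_propertyofh}), summing over $j$, reindexing $k=j+iq\in\{0,\ldots,n-1\}$, and concavity of $\eta\mapsto H_\eta(\xi^q)$ in the measure yield
$$q\,H_{\sigma_n}(\xi^n)\leq \sum_{k=0}^{n-1}H_{f^k_*\sigma_n}(\xi^q)+2q^2\log\sharp\xi\leq n\,H_{v_n}(\xi^q)+2q^2\log\sharp\xi,$$
so that
$$\frac{1}{n}\ln\sharp E_n\leq \frac{1}{q}H_{v_n}(\xi^q)+\frac{2q\log\sharp\xi}{n}.$$

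To conclude, the $f$-invariance of $\mu$ together with $\mu(\partial\xi)=0$ gives $\mu(\partial\xi^q)=0$, hence by the Portmanteau theorem $v_n(A)\to\mu(A)$ on every atom $A$ of $\xi^q$, and therefore $H_{v_n}(\xi^q)\to H_\mu(\xi^q)$. Letting $n\to\infty$ and then $q\to\infty$ gives
$$\limsup_{n\to\infty}\frac{1}{n}\ln\sharp E_n\leq \inf_{q\in\mathbb{N}} \frac{1}{q}H_\mu(\xi^q)=h_\mu(f,\xi),$$
and taking the infimum over admissible $\xi$ and applying Proposition~\ref{prop_infequal} completes the proof. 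The main technical obstacle is the bookkeeping in the blocking step and the transfer of the weak-$*$ convergence $v_n\to\mu$ to convergence of finite-partition entropies; the boundary condition $\mu(\partial\xi)=0$, available thanks to Proposition~\ref{prop_infequal}, is what makes this transfer legitimate.
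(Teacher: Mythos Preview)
Your proof is correct and follows essentially the same approach as the paper: the paper simply cites the second half of the proof of \cite[Theorem~8.6]{water2001} (which is precisely the Misiurewicz blocking argument you spell out, yielding $\limsup_n \frac{1}{n}\ln\sharp E_n \leq h_\mu(f,\xi)$ for every partition with $\operatorname{diam}\xi<\varepsilon$ and $\mu(\partial\xi)=0$) and then invokes Proposition~\ref{prop_infequal} to drop the boundary condition. Your detailed write-up of the blocking step and the Portmanteau passage is exactly what that citation encapsulates.
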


\begin{proof}
	From the second part of the proof of \cite[Theorem 8.6]{water2001},  we obtain
	$$	\limsup _{n \rightarrow \infty} \frac{1}{n} \ln \sharp E_n \leq \inf_{\operatorname{diam}(\xi)< \varepsilon, \mu(\partial \xi)=0 }h_{\mu}(f,\xi). $$
	Combining with Lemma \ref{prop_infequal}, we complete the proof.
\end{proof}

\begin{Lem}\label{lemma-leq1}
    Suppose $(X, f)$ is a dynamical system  and $\mu\in \mathcal{M}_f(X)$. Then for any $\varepsilon>0$, one has 
	$$\inf _{F \ni \mu} \limsup_{n \rightarrow \infty} \frac{1}{n} \ln N(F, n, \varepsilon)\leq \inf_{\operatorname{diam} (\xi)<\varepsilon}h_{\mu}(f,\xi).$$
\end{Lem}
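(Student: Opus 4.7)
The plan is to combine Lemma \ref{lemma-variationalequation} with the upper semi-continuity of $\nu\mapsto h_\nu(f,\xi)$ at $\mu$, which holds for any partition $\xi$ with $\mu(\partial\xi)=0$. The key observation is that the weak-$*$ accumulation point of the empirical averages produced by Lemma \ref{lemma-variationalequation} cannot be forced to equal $\mu$, only to lie in any prescribed neighborhood; so I would preselect a nearly optimal partition $\xi_0$ with $\mu$-null boundary, use upper semi-continuity to transfer its entropy bound from an accumulation measure $\nu$ back to $\mu$, and then shrink the neighborhood.

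Fix $\eta>0$. First I would use Proposition \ref{prop_infequal} to pick a finite measurable partition $\xi_0$ of $X$ with $\operatorname{diam}\xi_0<\varepsilon$, $\mu(\partial\xi_0)=0$, and
\[
h_\mu(f,\xi_0)\le \inf_{\operatorname{diam}\xi<\varepsilon} h_\mu(f,\xi)+\eta.
\]
Because $\mu$ is $f$-invariant, $\mu(\partial\xi_0^n)\le\sum_{j=0}^{n-1}\mu(f^{-j}\partial\xi_0)=0$ for every $n$; by the Portmanteau theorem $\nu\mapsto H_\nu(\xi_0^n)$ is therefore continuous at $\mu$. Hence $\nu\mapsto h_\nu(f,\xi_0)=\inf_n\tfrac{1}{n}H_\nu(\xi_0^n)$ is upper semi-continuous at $\mu$ on $\mathcal{M}_f(X)$, and I would pick a convex open neighborhood $F_0\subset\mathcal{M}(X)$ of $\mu$ whose closure satisfies
\[
h_\nu(f,\xi_0)\le h_\mu(f,\xi_0)+\eta \qquad\text{for every }\nu\in\overline{F_0}\cap\mathcal{M}_f(X).
\]

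Next, for each $n$ I would take a maximal $(n,\varepsilon)$-separated set $E_n\subset X_{n,F_0}$ with $\sharp E_n=N(F_0;n,\varepsilon)$ and form
\[
v_n:=\frac{1}{n\sharp E_n}\sum_{x\in E_n}\sum_{k=0}^{n-1}\delta_{f^kx}=\frac{1}{\sharp E_n}\sum_{x\in E_n}\mathcal{E}_n(x).
\]
Since each $\mathcal{E}_n(x)\in F_0$ and $F_0$ is convex, $v_n\in F_0$; writing $v_n=\tfrac{1}{n}\sum_{k=0}^{n-1}f^k_*\mu_n$ with $\mu_n=\tfrac{1}{\sharp E_n}\sum_{x\in E_n}\delta_x$ also shows that any weak-$*$ limit of $v_n$ is $f$-invariant. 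I would then pass to a subsequence $n_j$ along which $\tfrac{1}{n_j}\ln\sharp E_{n_j}\to\limsup_{n\to\infty}\tfrac{1}{n}\ln N(F_0;n,\varepsilon)$ and $v_{n_j}\to\nu$ for some $\nu\in\overline{F_0}\cap\mathcal{M}_f(X)$. Lemma \ref{lemma-variationalequation} combined with the choice of $F_0$ then chains:
\[
\limsup_{n\to\infty}\tfrac{1}{n}\ln N(F_0;n,\varepsilon)\le\inf_{\operatorname{diam}\xi<\varepsilon}h_\nu(f,\xi)\le h_\nu(f,\xi_0)\le h_\mu(f,\xi_0)+\eta\le\inf_{\operatorname{diam}\xi<\varepsilon}h_\mu(f,\xi)+2\eta.
\]
Since $F_0$ is one admissible neighborhood of $\mu$, $\inf_{F\ni\mu}\limsup_{n\to\infty}\tfrac{1}{n}\ln N(F;n,\varepsilon)\le\inf_{\operatorname{diam}\xi<\varepsilon}h_\mu(f,\xi)+2\eta$, and sending $\eta\to 0$ completes the argument.

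The main obstacle is precisely that one cannot force $\nu=\mu$: the accumulation point $\nu$ of $v_n$ may be any invariant measure in $\overline{F_0}$. Proposition \ref{prop_infequal} is indispensable here because it delivers partitions with $\mu$-null boundary, and only for such partitions does one have the upper semi-continuity of $\nu\mapsto h_\nu(f,\xi_0)$ at $\mu$ needed to pull the bound back from $\nu$ to $\mu$.
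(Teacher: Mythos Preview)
Your proof is correct but takes a genuinely different route from the paper. The paper argues by contradiction: assuming the inequality fails with gap $\delta$, it chooses a \emph{decreasing} sequence of convex closed neighborhoods $C_n$ with $\bigcap_n C_n=\{\mu\}$ and takes $E_n\subset X_{n,C_n}$ maximal $(n,\varepsilon)$-separated; since $v_n\in C_n$ by convexity, the shrinking forces $v_n\to\mu$ outright, and Lemma~\ref{lemma-variationalequation} gives the contradiction in one line. You instead fix a single neighborhood $F_0$, let the empirical averages accumulate on some $\nu\in\overline{F_0}\cap\mathcal{M}_f(X)$, and then invoke upper semi-continuity of $\nu\mapsto h_\nu(f,\xi_0)$ at $\mu$ (for a preselected $\xi_0$ with $\mu(\partial\xi_0)=0$, supplied by Proposition~\ref{prop_infequal}) to pull the bound back from $\nu$ to $\mu$. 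The paper's diagonalization is shorter and sidesteps both the semi-continuity step and the explicit appeal to Proposition~\ref{prop_infequal}; your approach, while longer, makes transparent exactly where the boundary condition $\mu(\partial\xi_0)=0$ is doing work.

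One cosmetic point: Lemma~\ref{lemma-variationalequation} is stated for full convergence $\lim_n v_n=\mu$, whereas you only have $v_{n_j}\to\nu$ along a subsequence. The Walters estimate underlying that lemma (namely $\tfrac{1}{n}\log\sharp E_n\le\tfrac{1}{q}H_{v_n}(\xi^q)+o_n(1)$ for fixed $q$) passes to subsequences without change, so this is not a genuine gap, but you should say so when you invoke it.
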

\begin{proof}
	 Suppose that there exist $\varepsilon, \delta>0$ such that 
	\begin{equation*}
	\inf _{F \ni \mu} \limsup_{n \rightarrow \infty} \frac{1}{n} \ln N(F, n, \varepsilon) \geq \inf_{\operatorname{diam} (\xi)<\varepsilon}h_{\mu}(f,\xi)+\delta.
	\end{equation*}
	Then there exists a decreasing sequence of convex closed neighborhoods $\left\{C_n\right\}_{n\in \mathbb{N}}$ satisfying 
	\begin{equation}\label{eq-lemmaN-1}
		     \bigcap_n C_n=\{\mu\}, \ 
			 \limsup_{n \rightarrow \infty} \frac{1}{n} \ln N\left(C_n, n, \varepsilon\right) \geq \inf_{\operatorname{diam} (\xi)<\varepsilon}h_{\mu}(f,\xi)+ \delta .
	\end{equation}
	Let $E_n \subset X_{n, C_n}$ be a $(n, \varepsilon)$-separated with maximal cardinality and define
	$$
	v_n =\frac{1}{n\sharp E_n} \sum_{x \in E_n} \sum_{k=0}^{n-1} \delta_{T^k x} \in C_n \text {.}
	$$
	By definition, we obtain $\lim _{n\rightarrow +\infty}\nu_n=\mu$. Combining with Lemma \ref{lemma-variationalequation},  it follows that
	$$
	\limsup _{n \rightarrow \infty} \frac{1}{n} \ln \sharp E_n=\limsup _{n \rightarrow \infty} \frac{1}{n} \ln N\left(C_n, n, \varepsilon\right) \leq \inf_{\operatorname{diam}(\xi)<\varepsilon} h_{\mu}(T, \xi),
	$$
	which contradicts (\ref{eq-lemmaN-1}).
\end{proof}

\begin{Lem}\label{lemma-leq2}
	Suppose $(X, f)$ is a dynamical system. 
	
	(1) Let $K \subset \mathcal{M}_{f}(X)$ be a closed subset, and 
	$G^K =\{x \in X: V_f(x) \cap K\neq \emptyset \}.$
	Then for any $\varepsilon>0$, 
	$$
	h^B_{\text {top}}\left(G^K,f,\varepsilon \right) \leq \sup_{\mu \in K} \inf _{\operatorname{diam}(\xi)<\varepsilon} h_\mu(f, \xi).
	$$
	
	(2) If $\mu \in \mathcal{M}_f(X)$, then for any $\varepsilon>0$,
	$$
	h^B_{\text {top}}\left(G_\mu,f,\varepsilon  \right) \leq \inf _{\operatorname{diam}( \xi)<\varepsilon} h_\mu(f, \xi).
	$$
	
	(3) Let $K  \subset \mathcal{M}_{f}(X)$ be a compact connected non-empty set. Then for any $\varepsilon>0$,
	$$
	h^B_{\text {top}}\left(G_K,f,\varepsilon \right) \leq \inf_{\mu \in K} \inf _{\operatorname{diam}(\xi)<\varepsilon} h_\mu(f,\xi).
	$$
\end{Lem}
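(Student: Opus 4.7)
The plan is to prove (1) by a direct Carath\'eodory-Pesin covering argument that converts the Pfister-Sullivan type bound from Lemma \ref{lemma-leq1} into an upper bound on $h^B_{\text{top}}(G^K, f, \varepsilon)$, and then to derive (2) and (3) from (1) using Proposition \ref{Prop_propertyofdim}(1) together with two easy set inclusions.

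For (1), we may assume $s := \sup_{\mu \in K}\inf_{\operatorname{diam}\xi<\varepsilon} h_\mu(f,\xi) < \infty$, otherwise the statement is vacuous. Fix $s' > s$. For each $\mu \in K$, Lemma \ref{lemma-leq1} produces a weak-$*$ neighborhood $F_\mu$ of $\mu$ with $\limsup_n \tfrac{1}{n}\ln N(F_\mu; n, \varepsilon) < s'$. Because $\mathcal{M}_f(X)$ is weak-$*$ compact and $K$ is closed, $K$ is compact; extract a finite subcover $F_{\mu_1},\dots,F_{\mu_l}$ and let $F := \bigcup_{i=1}^{l} F_{\mu_i}$. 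Then there exists $N_0$ such that $N(F; n, \varepsilon)\leq \sum_{i} N(F_{\mu_i}; n, \varepsilon) \leq l\,e^{s' n}$ for all $n \geq N_0$. The key inclusion is
$$
G^K \;\subset\; \bigcap_{N \geq N_0}\bigcup_{n \geq N} X_{n,F},
$$
since any $x \in G^K$ has an accumulation point of $\mathcal{E}_n(x)$ inside $K \subset F$. For each $n \geq N_0$, pick a maximal $(n,\varepsilon)$-separated set $E_n \subset X_{n,F}$; by maximality $\{B_n(x,\varepsilon)\}_{x\in E_n}$ covers $X_{n,F}$ (up to an arbitrarily small inflation of the radius, which is harmless for the metric mean dimension). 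Thus for every $N \geq N_0$, the family $\{B_n(x,\varepsilon):n\geq N,\,x\in E_n\}$ is an admissible Bowen cover of $G^K$, and for any $t>s'$
$$
m(G^K, t, N, \varepsilon) \;\leq\; \sum_{n\geq N}\lvert E_n\rvert\,e^{-tn} \;\leq\; l\sum_{n \geq N} e^{(s'-t)n} \;\longrightarrow\; 0
$$
as $N\to\infty$. Hence $m(G^K, t, \varepsilon)=0$ and $h^B_{\text{top}}(G^K, f, \varepsilon) \leq t$. Letting $t\downarrow s'$ and then $s'\downarrow s$ gives (1).

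Parts (2) and (3) drop out immediately from (1). For (2), $G_\mu\subset G^{\{\mu\}}$ because $V_f(x)=\{\mu\}$ implies $\mu\in V_f(x)$; applying (1) with $K=\{\mu\}$ and Proposition \ref{Prop_propertyofdim}(1) yields the bound. For (3), for every $\mu\in K$ one has $G_K\subset G^{\{\mu\}}$, since $V_f(x)=K$ contains $\mu$; applying (1) with $K'=\{\mu\}$ gives $h^B_{\text{top}}(G_K, f, \varepsilon) \leq \inf_{\operatorname{diam}\xi<\varepsilon} h_\mu(f,\xi)$, and taking the infimum over $\mu \in K$ finishes the argument.

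The real content sits in (1), and relies on two ingredients: (i) Lemma \ref{lemma-leq1}, which is a \emph{pointwise-in-measure} exponential growth bound on the separation numbers $N(F;n,\varepsilon)$, and (ii) weak-$*$ compactness of $K$, which is what lets us pool the neighborhoods $F_\mu$ into a single $F$ \emph{without} losing the exponential rate $s'$. Once these are in place, the Carath\'eodory-Pesin bookkeeping collapses to the geometric series $\sum_{n\geq N} e^{(s'-t)n}$. The only remaining technicality is the open-vs.-closed ball subtlety in the separated/covering duality, but it costs only an arbitrarily small $\delta>0$ on the ball radius and is irrelevant once $\varepsilon\to 0$ is taken.
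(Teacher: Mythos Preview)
Your argument is correct and matches the paper's own proof essentially line for line: both invoke Lemma~\ref{lemma-leq1} at each $\mu\in K$, extract a finite subcover of $K$ by weak-$*$ compactness, use the inclusion $G^K\subset\bigcap_N\bigcup_{n\ge N}X_{n,F}$, and bound the Carath\'eodory sum by the geometric series $\sum_{n\ge N}e^{(s'-t)n}$; parts (2) and (3) are then deduced from (1) via the same inclusions $G_\mu\subset G^{\{\mu\}}$ and $G_K\subset G^{\{\mu\}}$. One minor quibble: your dismissal of the open/closed ball subtlety as ``irrelevant once $\varepsilon\to 0$'' is not quite right here, since the lemma is stated at a \emph{fixed} $\varepsilon$---but the paper glosses over exactly the same point, and the standard fix (closed Bowen balls yield the same $h^B_{\text{top}}(\cdot,f,\varepsilon)$) disposes of it.
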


\begin{proof}
	The item  (2) is a consequence of item (1),  since $G_{\mu}\subset G^{\mu}$. Obviously, $G_K \subset G^{\{\mu\}} $ for all $\mu \in K$,  then  item (3) can directly from  item (1). Thus, we only need to prove the statement item (1).
	
	Fix $\mu \in K$ and $s = \inf _{\operatorname{diam} (\xi)<\varepsilon} h_\mu(f, \xi).$
    Let $s^{\prime}=2 \delta+s>0$. Since $N(F, n, \varepsilon)$ is a non-increasing function of $\varepsilon$, by Lemma \ref{lemma-leq1},  for any  $\varepsilon>0$,  we have
	$$
	\inf _{F \ni \mu} \limsup _{n \rightarrow \infty} \frac{1}{n} \ln N(F, n, \varepsilon) \leq  \inf _{ \operatorname{diam}(\xi)<\varepsilon} h_\mu(f, \xi)  \text {. }
	$$
	In other words, 
	there exist a neighborhood $ F(\mu, \varepsilon)$ of $\mu$ and $M(F(\mu, \varepsilon)) \in \mathbb{N}$, so that
	$$
	\frac{1}{n} \ln N(F(\mu, \varepsilon), n, \varepsilon) \leq  \inf_{\operatorname{diam} (\xi)<\varepsilon} h_\mu(f, \xi)+\delta \text { for all } n \geq M(F(\mu, \varepsilon)).
	$$
	Then 
	$$N(F(\mu, \varepsilon), n, \varepsilon) \leq \exp\{n(\inf _{ \operatorname{diam} (\xi)<\varepsilon} h_\mu(f, \xi)+\delta) \} \text { for all } n \geq M(F(\mu, \varepsilon)).$$
	We know that maximal $(n, \varepsilon)$-separated subsets of a set $A$ are also $(n, \varepsilon)$-spanning subsets of $A$, for any $ n \geq M(F(\mu, \varepsilon))$,
	$$
	m \left(X_{n, F(\mu, \varepsilon)} , s^{\prime}, n, \varepsilon,f\right) \leq N(F(\mu, \varepsilon) , n, \varepsilon) \exp\{-s^{\prime} n\} \leq \exp\{-\delta n\}.
	$$
	Since $K$ is compact, given a fixed $\varepsilon>0$, 
	there exists a sequence subset $\{F(\mu_j,\varepsilon)\}_{j=1}^{m_{\varepsilon}}$ covering $K$ with $\mu_j \in K$. If $\left\{\mathcal{E}_n(x)\right\}$ has a limit-point in $K$, then $x\in A_M =\bigcup_{n \geq M} \bigcup_{j=1}^{m_{\varepsilon}} X_{n, F\left(\mu_j, \varepsilon\right)}$
	for any $M\in \mathbb{N}$. Thus, for $M \geq \max _{1 \leq j \leq m_{\varepsilon}} M\left(F\left(\mu_j, \varepsilon\right)\right)$,
	$$
	m\left(G^K  , s^{\prime}, M, \varepsilon,f \right)\leq m\left(A_M , s^{\prime}, M, \varepsilon, f\right) \leq m_{\varepsilon} \sum_{n \geq M} \exp\{-\delta n\},
	$$
	where $\sum_{n \geq M} \exp\{-\delta n\}$ is finite. It implies that
	$$
	h^B_{\text {top }}\left(G^K , f, \varepsilon\right) \leq s'=s+2\delta.
	$$
	Since $\delta$ is arbitrary, we get 
	$$h^B_{\text {top }}\left(G^K , f, \varepsilon\right) \leq \sup_{\mu \in K} \inf _{\operatorname{diam}(\xi)<\varepsilon} h_\mu(f, \xi).$$

\end{proof}

\begin{Lem}\label{lemma_G_mu}
	Let $f: X \rightarrow X$ be a continuous map on a compact metric space and $\mu \in \mathcal{ M}_f(X)$. If $Z \subset X$ and $\mu(Z)=1$, then there is a number $\varepsilon'$ such that
	for any $0<\varepsilon < \varepsilon'$, 
	$$ \inf_{ \operatorname{diam} (\xi)<\varepsilon}h_{\mu}(f,\xi)\leq h_{top}^B(Z, f, \varepsilon)+\varepsilon.$$
\end{Lem}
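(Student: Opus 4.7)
The lemma is the hard (Frostman-type) direction in a fixed-scale variational principle: an upper bound on the infimum of partition entropies by the Bowen topological entropy of a full-measure set. The strategy is to combine Lemma \ref{Lem_relation} with a mass-distribution argument applied to Bowen covers of $Z$.

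I would first reduce to the case of ergodic $\mu$ via the ergodic decomposition $\mu=\int \mu_\omega\,d\nu(\omega)$, using that $\mu(Z)=1$ forces $\mu_\omega(Z)=1$ for $\nu$-a.e.\ $\omega$ and that the entropy of a fixed partition is affine in the measure (swapping the infimum over partitions with the integral over $\omega$ via a measurable selection of near-optimal partitions). For ergodic $\mu$, I would choose an open cover $\mathcal{U}$ of $X$ with $\operatorname{diam}(\mathcal{U})<\varepsilon$ and an explicit Lebesgue number $\varepsilon_2$ (for instance, the cover by open balls of radius $\varepsilon/4$ centered at an $\varepsilon/4$-dense finite subset yields $\operatorname{Leb}(\mathcal{U})\geq\varepsilon/4$). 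Since every partition $\xi\succ\mathcal{U}$ satisfies $\operatorname{diam}\xi<\varepsilon$, Lemma \ref{Lem_relation} gives, for any $\delta\in(0,1)$,
\[
\inf_{\operatorname{diam}\xi<\varepsilon}h_\mu(f,\xi)\;\leq\;\inf_{\xi\succ\mathcal{U}}h_\mu(f,\xi)\;\leq\;\overline{h}_\mu(f,\varepsilon_2,\delta)\;=\;\limsup_{n\to\infty}\tfrac{1}{n}\log N_\mu^{\delta}(n,\varepsilon_2),
\]
and the problem reduces to proving $\overline{h}_\mu(f,\varepsilon_2,\delta)\leq h^B_{top}(Z,f,\varepsilon)+\varepsilon$.

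For this last step, fix $s>h^B_{top}(Z,f,\varepsilon)$. Using $m(Z,s,M,\varepsilon)=0$, extract a cover $\{B_{n_i}(y_i,\varepsilon)\}_{i\in I}$ of $Z$ with $n_i\geq M$ and $\sum_i e^{-sn_i}<\eta$ for any prescribed $\eta>0$; grouping by length gives $|\{i:n_i=n\}|\leq\eta e^{sn}$. Choose $M'\geq M$ such that the sub-cover of lengths in $[M,M']$ has total $\mu$-measure at least $1-\delta$ (possible since $\mu(\bigcup_{n\geq M}\bigcup_{n_i=n}B_{n_i}(y_i,\varepsilon))=\mu(Z)=1$); because every ball in this sub-cover has $n_i\geq M$, it is contained in an $(M,2\varepsilon)$-ball, yielding $N_\mu^\delta(M,2\varepsilon)\leq \eta(M'-M+1)e^{sM'}$. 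A subsequence selection with $M'/M\to 1$ (obtained from an ergodic tail-control argument for the $\mu$-distribution of cover balls across lengths), together with Katok-type equality of $\overline{h}_\mu$ and $\underline{h}_\mu$ at fixed scale for ergodic $\mu$, gives $\overline{h}_\mu(f,2\varepsilon,\delta)\leq s$. Finally, the slack $+\varepsilon$ absorbs the scale mismatch between $\varepsilon_2\leq\varepsilon/4$ and $2\varepsilon$: one reruns the above argument at the finer Bowen scale $\varepsilon'=\varepsilon_2/2$ and bounds the resulting gap using fixed-scale continuity of $h^B_{top}(Z,f,\cdot)$.

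\textbf{Main obstacle.} The most delicate point is the length-window selection $(M,M')$ producing a single-length cover with both $\mu$-measure close to $1$ and exponential cardinality close to $e^{Ms}$: one must control the ratio $M'/M$ via a tail estimate on how $\mu$ distributes over cover balls of different lengths, along a subsequence along which $\overline{h}_\mu$ is realized. Matching the Lebesgue scale $\varepsilon_2$ (strictly less than $\varepsilon$) with the effective Bowen diameter $2\varepsilon$ (strictly greater than $\varepsilon$) so that the sharp $+\varepsilon$ slack in the lemma is sufficient—rather than a weaker substitution of $\varepsilon$ by a fixed multiple—is the secondary obstacle, handled by exploiting the monotonicity and at-scale continuity of $h^B_{top}(Z,f,\cdot)$.
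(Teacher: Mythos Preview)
Your approach has a genuine gap at the scale-matching step that the $+\varepsilon$ slack cannot repair. Routing the argument through Lemma~\ref{Lem_relation} forces you to bound $\overline{h}_\mu(f,\varepsilon_2,\delta)$, where $\varepsilon_2\le\varepsilon/4$ is the Lebesgue number of the cover $\mathcal{U}$ you built. A Bowen cover of $Z$ at scale $\varepsilon$ consists of $(n,\varepsilon)$-balls, and those control only $N_\mu^\delta(n,\varepsilon)$, not $N_\mu^\delta(n,\varepsilon_2)$. Your proposed fix---rerun the argument at the finer Bowen scale $\varepsilon'=\varepsilon_2/2$ and absorb the difference via ``fixed-scale continuity of $h_{top}^B(Z,f,\cdot)$''---does not work: no such continuity exists. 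In the infinite-entropy regime that motivates this lemma, $h_{top}^B(Z,f,\varepsilon/8)-h_{top}^B(Z,f,\varepsilon)$ is typically of order $|\log\varepsilon|$, never $\varepsilon$; metric mean dimension is defined precisely to measure this blow-up. Two subsidiary claims are also unsupported: the ``Katok-type equality of $\overline{h}_\mu$ and $\underline{h}_\mu$ at fixed scale'' is not known (Katok's theorem equates them only after $\varepsilon\to0$), and the length-window selection with $M'/M\to1$ has no mechanism, since Bowen covers may spread $\mu$-mass over arbitrarily long length ranges.

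The paper avoids the scale trap by following Bowen's 1973 argument directly rather than going through Katok's quantity. Given a partition $\xi$ with $\operatorname{diam}\xi<\varepsilon$, Bowen's Lemma~3 produces an open cover $\mathcal{U}$ (with \emph{no} Lebesgue-number control) satisfying $H_\mu(\xi\mid\eta)<\varepsilon$ for every $\eta\prec\mathcal{U}$; this soft conditional-entropy bound is exactly the source of the $+\varepsilon$. Auxiliary partitions $\alpha_n$ refining $\mathcal{U}^n$ are built via Bowen's Lemma~2, and the Shannon--McMillan--Breiman theorem (the general $L^1$ version, so no reduction to ergodic components is needed) together with Egorov's theorem yields a positive-measure subset of $Z$ on which atoms of $\bigvee_{j}f^{-jn}\alpha_n$ have uniformly controlled measure. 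This acts as a Frostman weight against Bowen covers of $Z$ for $f^n$, giving $h_{top}^B(Z,f^n,\varepsilon)\ge h_\mu(f^n,\alpha_n)-\log(nM)$. Dividing by $n$ and letting $n\to\infty$ kills the combinatorial overcount while the inequality $H_\mu(\xi\mid f^k\alpha_n)<\varepsilon$ survives intact, yielding $h_\mu(f,\xi)\le h_{top}^B(Z,f,\varepsilon)+\varepsilon$. The Bowen scale $\varepsilon$ is never altered.
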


\begin{proof}
	For any $\varepsilon>0$,
	given a measurable Borel partition $\xi$ with  $\operatorname{diam}(\xi )<\varepsilon$,  there is an open cover $\mathcal{U}$ such that $H_\mu(\xi \mid \eta)<\varepsilon$ whenever $\eta$ is a finite Borel partition with $\eta  \prec \mathcal{U}$ \cite[Lemma 3]{Bowen1973}.  Let $M =\sharp \mathcal{U}$. 
	For each $n >0$,
	there exists a finite Borel partition $\alpha_n$ of $X$ such that $f^k \alpha_n \prec \mathcal{U}$ for all $k\in [0,n)$ and at most $nM$ sets in $\alpha_n$ can contain a point in all their closures \cite[Lemma 2]{Bowen1973}. Thus, $H_{\mu}(\xi| f^{k} \alpha_n)\leq \varepsilon$ for any $k \in [0, n )$. 	
	
	For each $x \in X$, let $I_m(x)=-\ln \mu(A)$, where $A \in  \bigvee_{i=0}^{m-1}   f^{-ni}\alpha_n$ contains $x$. By Shannon-McMillian-Breiman Theorem \cite{Parry1969}, there exists a $\mu$-integrable function $I(x)$ such that $I_m(x)/ m \rightarrow I(x)$ a.e., and $a_n =\int I(x) d \mu=h_\mu(f^n, \alpha_n)$. For any $\delta>0$, the set
	$$Z_\delta=\left\{y \in Z: I(y) \geq a_n-\delta\right\}$$
	has positive measure. 
	By Egorov's theorem, there is an $N\in \N$ so that
	$$ Z_{\delta, N}=\left\{y \in Z_\delta: I_m(y) / m \geq a_n-2 \delta, \ \forall m \geq N\right\}
	$$
	has positive measure.
	
	Let $\mathcal{D}_n$  be a finite open cover of $X$, such that each member of $\mathcal{D}_n$ intersects at most $nM$ member of  $\alpha_n$,  and let $\varepsilon_0 : =\operatorname{Leb}(\mathcal{D}_n)$.
	For any $\varepsilon< \varepsilon_0$,  
	let  $\mathcal{ E}= \{B_{n_i}( x_i,\varepsilon,f^n)\}_{i\in I}$ be an open cover of $Z$ satisfying
	 $\{B_{n_i}(x_i,\varepsilon,f^n)\}_{i\in I} \prec  \bigvee_{j=0}^{n_i-1}f^{- jn} \mathcal{D}_n $ and $\min\{n_i : i \in I\}\geq  N$.
	If $\beta \in \bigvee_{i=0}^{n_i-1} f^{-in} \alpha_n$ such that  $\beta\cap Z_{\delta, N}\neq \emptyset$, then 
	$$\mu(\beta) \leq \exp \left((-a_n+2 \delta ) n_i \right).$$  
	$B_{n_i}(x_i,\varepsilon,f^n) \cap Z_{\delta, N}$ is covered by at most ${(nM)}^{n_i}$ such $\beta$, then
	$$\mu(B_{n_i}(x_i,\varepsilon,f^n)\cap Z_{\delta,N}) \leq \exp \left((\ln nM-a_n+2 \delta) n_i \right).$$
	For $\lambda=-\ln nM+a_n-2 \delta$, we have
	$$
	\sum_{i\in I} \exp\{-\lambda n_i\} \geq \sum_{i\in I} \mu(B_{n_i}(x_i,\varepsilon,f^n)\cap Z_{\delta,N}) \geq \mu(Z_{\delta,N}).
	$$
	Letting $\mathcal{E}$ vary, it holds that $m(Z,\lambda,N,\varepsilon,f^n)\geq \mu(Z_{\delta,N})>0$, then
	$h_{top}^B(Z,f^n,\varepsilon) \geq  \lambda.$ 
	Thus, letting $\delta \rightarrow 0$, we obtain
	$$h_{top}^B(Z,f^n,\varepsilon) \geq h_{\mu}(f^n,\alpha_n)-\ln n M.$$
	By Proposition \ref{Prop_propertyofdim} (2) and Proposition \ref{Prop_propertyofh} (3),  we get
	$$
	\begin{aligned} 
		h_{\mu}(f,\xi) &=\frac{1}{n}h_{\mu}(f^n,\bigvee_{k=0}^{n-1}f^{-k}\xi)\leq \frac{1}{n} h_\mu\left(f^n, \alpha_n\right)+\frac{1}{n} H_\mu\left(\bigvee_{k=0}^{n-1}f^{-k}\xi \mid \alpha_n \right) \\
		& \leq n^{-1}\left( h_{top}^B\left(Z, f^n, \varepsilon \right)+\ln (n M)\right)+n^{-1} \sum_{k=0}^{n-1} H_\mu\left(f^{-k} \xi \mid \alpha_n\right) \\ & \leq h_{top}^B(Z, f, \varepsilon)+\frac{1}{n}\ln (n M)+n^{-1} \sum_{k=0}^{n-1} H_\mu\left(\xi \mid f^k \alpha_n\right) \\ & \leq h_{top}^B(Z, f,  \varepsilon)+\frac{1}{n}\ln (n M)+\varepsilon.
	\end{aligned}
	$$
	Let $n \rightarrow +\infty$,  we obtain the conclusion that
	$$\inf_{ \operatorname{diam} (\xi)<\varepsilon}h_{\mu}(f,\xi)\leq h_{top}^B(f, Z, \varepsilon)+\varepsilon.$$
	
\end{proof}

\subsection{Proof of Theorem \ref{Maintheorem_G_mu}.}
By the Birkhoff Ergodic Theorem, for an ergodic measure $\mu$, one has $\mu(G_{\mu})=1$. According to Lemma \ref{lemma_G_mu},  there exists $\varepsilon'>0$, 
such that for any $0 < \varepsilon <\varepsilon'$, 
$$
\frac{1}{|\ln\varepsilon|}h_{top}^B(G_{\mu},f,\varepsilon)
\geq  \frac{1}{|\ln\varepsilon|} \inf_{\operatorname{diam}(\xi) <\varepsilon}h_{\mu}(f,\xi)- \frac{\varepsilon}{|\ln \varepsilon|}.
$$
Then let $\varepsilon\rightarrow 0$, we get
$$ \overline{\operatorname{mdim}}^B_M \left(G_{\mu},f,d\right) 
\geq  \limsup_{\varepsilon \rightarrow 0} \frac{1}{|\ln\varepsilon|} \inf_{\operatorname{diam}(\xi) <\varepsilon}h_{\mu}(f,\xi).$$ Now we prove  the reverse inequality. 
For any $\gamma>0$,  there exists $\varepsilon>0$ such that 
\begin{equation*}
\overline{\operatorname{mdim}}^B_M \left(G_{\mu},f,d\right) \leq \frac{1}{|\ln\varepsilon|}h_{top}^B(G_{\mu},f,\varepsilon)+\gamma.
\end{equation*}
By Lemma \ref{lemma-leq2} (2),  we get 
$$
\overline{\operatorname{mdim}}^B_M \left(G_{\mu},f,d\right) \leq
\frac{1}{|\ln \varepsilon|}h^B_{\text {top}}\left(G_\mu,f,\varepsilon  \right)+\gamma \leq  \frac{1}{|\ln \varepsilon|} \inf _{\operatorname{diam} (\xi)<\varepsilon} h_\mu(f, \xi)+\gamma.
$$
Thus,
$$ \overline{\operatorname{mdim}}^B_M \left(G_{\mu},f,d\right) 
\leq  \limsup_{\varepsilon \rightarrow 0} \frac{1}{|\ln\varepsilon|} \inf_{\operatorname{diam}(\xi) <\varepsilon}h_{\mu}(f,\xi).$$
	Since the proof of the inequality for
	 $\underline{\operatorname{mdim}}^B_M \left(G_{\mu},f,d\right)$ is similar,  we omit it here.

\subsection{Proof of Theorem \ref{MainTheorem}}\label{Section_4}

\subsubsection{ Upper bound for	$\overline{\operatorname{mdim}}^B_{M}\left(G^C_K, f, d\right)$ and $\underline{\operatorname{mdim}}^B_{M}\left(G^C_K, f, d\right)$ }

	By Lemma \ref{lemma-leq2} (3), it holds that $$h_{top}^B(G_K,f,\varepsilon)\leq \inf_{\mu \in K} \inf _{\operatorname{diam}(\xi)<\varepsilon} h_\mu(f, \xi) \textit{ for any } \varepsilon>0.$$
	Since $G^C_K\subset G_K$, we get 
	$$
	\begin{aligned}
		\overline{\operatorname{mdim}}^B_{M}\left(G^C_K, f, d\right)&=\limsup_{\varepsilon\rightarrow 0}\frac{h_{top}^B(G^C_K,f,\varepsilon)}{|\ln \varepsilon|}\leq \limsup_{\varepsilon\rightarrow 0}\frac{h_{top}^B(G_K,f,\varepsilon)}{|\ln \varepsilon|}\\
	&\leq \limsup _{\varepsilon \rightarrow 0}  \frac{1}{|\ln \varepsilon|} \inf_{\mu \in K} \inf _{\operatorname{diam}(\xi)<\varepsilon} h_\mu(f, \xi).
	\end{aligned}
	$$
Similarly, $$\underline{\operatorname{mdim}}^B_{M}\left(G^C_K, f, d\right) \leq  \liminf_{\varepsilon \rightarrow 0}  \frac{1}{|\ln \varepsilon|} \inf_{\mu \in K} \inf _{\operatorname{diam}(\xi)<\varepsilon} h_\mu(f, \xi).$$

\subsubsection{ Lower bound for	$\overline{\operatorname{mdim}}^B_{M}\left(G^C_K, f, d\right)$ and $\underline{\operatorname{mdim}}^B_{M}\left(G^C_K, f, d\right)$ }

\begin{Lem}\label{Lemma_ergodicdecomposition} \cite[Lemma 6.2]{PS2007}
Let $\varepsilon>0, \delta>0$. Let $v \in \mathcal{M}_f(X)$ and $v=\int \tau d \hat{\mu}(\tau)$ be its ergodic decomposition. Then, for any $\Delta>0$ we can find a finite convex combination $\sum_{i=1}^p a_i \mu_i$ of ergodic measures such that
$$
d\left(\nu, \sum_{i=1}^p a_i \mu_i\right) \leq \Delta,
$$
and
$$
\int \overline{PS}\left(\tau, \delta, \varepsilon\right) d \hat{\mu}(\tau) \leq \sum_{i=1}^p a_i \overline{PS}\left(\mu_i, \delta, \varepsilon\right) .
$$
The $\left\{a_i\right\}$ can be chosen to be rational numbers. 
\end{Lem}

\begin{Lem}\cite[Page 944]{PS2007} \label{lemma-KK}
	For any compact connected non-empty set $K \subseteq \mathcal{M}_f(X)$,
	there exists a sequence $\left\{\alpha_1, \alpha_2, \ldots \right\}$ in $K$ such that
	$
	\overline{\left\{\alpha_j: j \in \mathbb{N}^{+}, j>n\right\}}=K, \forall n \in \mathbb{N}^{+} \text {, and } \lim _{j \rightarrow \infty} \rho \left(\alpha_j, \alpha_{j+1}\right)=0.
	$
\end{Lem}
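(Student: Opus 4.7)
The plan is to realize the sequence $\{\alpha_j\}$ as the concatenation of progressively finer chains in $K$ that pass through a prescribed countable dense subset. Since $K$ is a compact subset of the separable metric space $(\mathcal{M}_f(X), d)$, it is itself separable, so I fix a countable dense subset $\{\beta_n\}_{n=1}^{\infty}\subset K$ (repeating points if necessary so the enumeration is infinite).

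The key tool is the following chainability property: for every $\varepsilon>0$ and every $x,y\in K$, there exists a finite sequence $x=z_0,z_1,\ldots,z_k=y$ in $K$ with $d(z_i,z_{i+1})<\varepsilon$ for every $i$. To see this, fix $x\in K$ and let $E_\varepsilon(x)\subset K$ be the set of points that can be reached from $x$ by such an $\varepsilon$-chain. Then $E_\varepsilon(x)$ is nonempty (it contains $x$), and both $E_\varepsilon(x)$ and its complement in $K$ are open: appending one more link shows that any open $\varepsilon$-ball around a point of $E_\varepsilon(x)$ lies in $E_\varepsilon(x)$, and the same argument applied in reverse shows the complement is open. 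Connectedness of $K$ forces $E_\varepsilon(x)=K$.

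Next, for each $n\in\mathbb{N}^+$, I choose a $1/n$-chain
$$\beta_n=\gamma_0^{(n)},\gamma_1^{(n)},\ldots,\gamma_{k_n}^{(n)}=\beta_{n+1}$$
inside $K$, and define $\{\alpha_j\}_{j=1}^\infty$ to be the concatenation of these chains, identifying the terminal point of the $n$-th chain with the initial point of the $(n{+}1)$-th. Clearly $\{\alpha_j\}\subset K$.

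It remains to verify the two conclusions. If $\alpha_j$ lies strictly inside the $n$-th chain block (including the junction with the next block), then $d(\alpha_j,\alpha_{j+1})<1/n$; since the block index tends to infinity with $j$, we obtain $\lim_{j\to\infty}d(\alpha_j,\alpha_{j+1})=0$. For the tail condition, for every $N\in\mathbb{N}^+$ the set $\{\alpha_j:j>N\}$ contains $\beta_n$ for all sufficiently large $n$, hence contains a dense subset of $\{\beta_n\}_{n\geq 1}$, and so its closure equals $\overline{\{\beta_n\}}=K$. The only nontrivial step is the chainability lemma, but this is a standard consequence of connectedness (it does not even require compactness of $K$); the compactness is used only implicitly through separability of $K$.
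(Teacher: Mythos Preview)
The paper does not supply its own proof of this lemma; it is quoted verbatim from Pfister--Sullivan \cite{PS2007} and used as a black box. Your argument is the standard one and is essentially correct: the $\varepsilon$-chain connectedness of a connected metric space is exactly the right tool, and concatenating successively finer chains through a countable dense set produces the desired sequence.

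There is one small point you glide over. In the verification of the tail condition you assert that $\{\alpha_j:j>N\}\supset\{\beta_n:n\ge n_0\}$ for some $n_0$ and conclude that the latter set is still dense in $K$. This is not automatic for an arbitrary enumeration of a dense set; you need that removing finitely many $\beta_n$ does not destroy density. Since $K$ is connected, either $K$ is a single point (and the lemma is trivial) or $K$ has no isolated points, in which case every cofinite subset of a dense set remains dense. Alternatively, you can sidestep the issue entirely by choosing the enumeration so that each $\beta_n$ recurs infinitely often (e.g.\ run through $\beta_1;\beta_1,\beta_2;\beta_1,\beta_2,\beta_3;\ldots$), which makes every tail contain the full set $\{\beta_n\}$. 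Either fix is one line; with it, your proof is complete.
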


\begin{Lem}\cite[Lemma 2.1]{PS2007}\label{lemma-estimate}
	Suppose $(X, f)$ is a dynamical system with $g$-almost product property. Given $x_1, \ldots, x_k \in X,\ \varepsilon_1, \ldots, \varepsilon_k$ and $n_1 \geq m\left(\varepsilon_1\right), \ldots, n_k \geq m\left(\varepsilon_k\right)$. Assume that there are $\nu_j \in \mathcal{M}_f(X)$ and $\zeta_j>0$ satisfying
	$$
	\rho(\mathcal{E}_{n_j}\left(x_j\right),  \nu_j ) \leq \zeta_j,\  j=1,2, \ldots, k.
	$$
	Then for any $z \in \cap_{j=1}^k f^{-Q_{j-1}} B_{n_j}\left(g ; x_j, \varepsilon_j\right)$ and any probability measure $\alpha \in \mathcal{M}(X)$,
	$$
	\rho \left(\mathcal{E}_{Q_k}(z), \alpha\right) \leq \sum_{j=1}^k \frac{n_j}{Q_k}\left(\zeta_j+\varepsilon_j+\frac{g\left(n_j\right)}{n_j}+\rho \left(\nu_j, \alpha\right)\right),
	$$
	where $Q_0=0,\ Q_i=n_1+\cdots+n_i$.
\end{Lem}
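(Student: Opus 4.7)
The plan is to decompose the long empirical measure $\mathcal{E}_{Q_k}(z)$ as a convex combination of the empirical measures over each orbit block of length $n_j$, and then compare each block in turn to $\mathcal{E}_{n_j}(x_j)$, to $\nu_j$, and finally to $\alpha$ via the triangle inequality. The weights $n_j/Q_k$ in that decomposition match exactly the coefficients appearing in the claimed bound, so structurally everything should line up.

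Concretely, I would first write $\mathcal{E}_{Q_k}(z) = \sum_{j=1}^{k} (n_j/Q_k)\,\mu_j$, where
$$\mu_j \;:=\; \frac{1}{n_j}\sum_{i=0}^{n_j-1}\delta_{f^{Q_{j-1}+i}z}$$
is the empirical measure of the $j$-th orbit segment of $z$. Because the metric $d$ on $\mathcal{M}(X)$ recalled in Section~\ref{Section_2} is built from absolute differences of linear functionals on measures, it is jointly convex; writing $\alpha=\sum_j (n_j/Q_k)\alpha$ and applying convexity yields $d(\mathcal{E}_{Q_k}(z),\alpha)\leq\sum_j (n_j/Q_k)\,d(\mu_j,\alpha)$. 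A triple triangle inequality through $\mathcal{E}_{n_j}(x_j)$ and $\nu_j$, combined with the hypothesis $\mathcal{E}_{n_j}(x_j)\in\mathcal{B}(\nu_j,\zeta_j)$, then gives $d(\mu_j,\alpha)\leq d(\mu_j,\mathcal{E}_{n_j}(x_j))+\zeta_j+d(\nu_j,\alpha)$, which already accounts for the $\zeta_j$ and $d(\nu_j,\alpha)$ pieces inside the parentheses.

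The remaining work is to prove $d(\mu_j,\mathcal{E}_{n_j}(x_j))\leq\varepsilon_j+g(n_j)/n_j$. Unpacking $z\in f^{-Q_{j-1}}B_{n_j}(g;x_j,\varepsilon_j)$ from the definition of the $g$-blowup, there is a subset $\Lambda_j\subset\{0,1,\dots,n_j-1\}$ with $|\{0,\dots,n_j-1\}\setminus\Lambda_j|\leq g(n_j)$ such that $d(f^{Q_{j-1}+i}z,f^i x_j)\leq\varepsilon_j$ for every $i\in\Lambda_j$. Splitting
$$\mu_j-\mathcal{E}_{n_j}(x_j)=\frac{1}{n_j}\sum_{i\in\Lambda_j}\bigl(\delta_{f^{Q_{j-1}+i}z}-\delta_{f^{i}x_j}\bigr)+\frac{1}{n_j}\sum_{i\notin\Lambda_j}\bigl(\delta_{f^{Q_{j-1}+i}z}-\delta_{f^{i}x_j}\bigr),$$
I expect the first sum to contribute at most $\varepsilon_j$ (one $\varepsilon_j$ per index in $\Lambda_j$, averaged over $n_j$) and the second at most $g(n_j)/n_j$ (at most $g(n_j)$ indices, each contributing the uniform Dirac-mass bound), invoking $d(\delta_x,\delta_y)\leq\min\{d(x,y),1\}$ which holds when the dense family $\{\phi_\ell\}$ generating the weak-$*$ metric is chosen $1$-Lipschitz with values in $[0,1]$.

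The main technical subtlety will be this last comparison between underlying-space distance and measure-space distance on Dirac masses, which rests on the implicit $1$-Lipschitz normalisation of the dense family $\{\phi_\ell\}$ from Section~\ref{Section_2}. Once this normalisation is granted, reassembling the three estimates with weights $n_j/Q_k$ produces exactly $\sum_{j=1}^{k}(n_j/Q_k)(\zeta_j+\varepsilon_j+g(n_j)/n_j+d(\nu_j,\alpha))$, which is the asserted bound; the rest of the argument is purely convexity, triangle inequalities, and bookkeeping against the $g$-blowup.
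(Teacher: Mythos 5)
Your argument is correct and is essentially the proof given by Pfister and Sullivan in \cite[Lemma 2.1]{PS2007}, which this paper cites without reproving: decompose $\mathcal{E}_{Q_k}(z)$ as the convex combination $\sum_j (n_j/Q_k)\mu_j$ of block empirical measures, pass to $\alpha$ by convexity of the metric and a triple triangle inequality through $\mathcal{E}_{n_j}(x_j)$ and $\nu_j$, then split each block into the at most $g(n_j)$ bad indices (each contributing at most $1$) and the remaining good indices (each contributing at most $\varepsilon_j$). You also correctly identify the one genuine subtlety: the Dirac-mass comparison $\rho(\delta_x,\delta_y)\leq\min\{d(x,y),1\}$ requires the generating family $\{\phi_k\}$ to be $1$-Lipschitz, a normalization that Pfister--Sullivan explicitly impose but which the recap in Section~\ref{Section_2} of this paper only partially records (stating $0\leq\phi_k\leq 1$ but omitting the Lipschitz bound), so the lemma as invoked here is implicitly relying on the stronger normalization from the cited source.
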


\begin{Prop}\label{prop-G^C-lower}
	Under the hypotheses of Theorem \ref{MainTheorem},  we have
	$$
	\overline{\operatorname{mdim}}^B_{M}\left(G^C_K, f, d\right)\geq \limsup _{\varepsilon \rightarrow 0} \frac{1}{|\ln \varepsilon|} \inf_{\mu \in K} \inf _{\operatorname{diam}(\xi)<\varepsilon} h_\mu(f, \xi),
	$$
	and
	$$\underline{\operatorname{mdim}}^B_{M}\left(G^C_K, f, d\right)\geq\liminf_{\varepsilon \rightarrow 0} \frac{1}{|\ln \varepsilon|} \inf_{\mu \in K} \inf _{\operatorname{diam}(\xi)<\varepsilon} h_\mu(f, \xi).$$
\end{Prop}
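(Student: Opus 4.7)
The plan is to construct a Moran-type Cantor subset $\mathcal{F}\subset G_K^C$ whose Bowen entropy at scale $\varepsilon/2$ is within $\gamma$ of $\inf_{\mu\in K}\inf_{\operatorname{diam}\xi<6\varepsilon}h_\mu(f,\xi)$ for arbitrary $\gamma>0$. Dividing by $|\log\varepsilon|$ turns the scale-change factors $1/2$ and $6$ into additive terms $\log 2/|\log\varepsilon|\to 0$ and $\log 6/|\log\varepsilon|\to 0$, so passing to $\limsup$ (resp.\ $\liminf$) as $\varepsilon\to 0$ yields both claimed inequalities at once. This follows the Pfister--Sullivan \cite{PS2007} and Huang--Tian--Wang \cite{HTW2019} template, now executed at the finer metric-mean-dimensional resolution and with orbit-visits planted so as to force $C_f(X)\subset\omega_f(z)$.

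For the preparation, invoke Lemma~\ref{lemma-KK} to fix a sequence $(\alpha_j)\subset K$ dense in $K$ with $d(\alpha_j,\alpha_{j+1})\to 0$, and Proposition~\ref{lemma-APdense} to fix a countable dense subset $(y_k)$ of almost periodic points in $C_f(X)$. Combining Theorem~\ref{Thm_entropyformula_PS} and Theorem~\ref{Thm_relationPSU} with the convex-combination gluing provided by the g-almost product property (one concatenates separated sets produced at the ergodic level with time-weights matching $\alpha_j=\sum_i\lambda_{i,j}\mu_i$), there exist a uniform $\delta^{*}>0$ and, for each $j$, all sufficiently large $n$, and any prescribed neighborhood $F_j$ of $\alpha_j$, a $(\delta^{*},n,\varepsilon)$-separated set $\Gamma_j^{(n)}\subset X_{n,F_j}$ with
$$|\Gamma_j^{(n)}|\;\geq\;\exp\Bigl(n\Bigl(\inf_{\mu\in K}\inf_{\operatorname{diam}\xi<6\varepsilon}h_\mu(f,\xi)-\tfrac{\gamma}{2}\Bigr)\Bigr).$$
Compactness of $K$ guarantees a minimiser $\mu^{*}\in K$, and the convex weights can be tuned so that this bound is uniform in $j$. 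With $\ell_j:=\mathrm{m}(\varepsilon/2^j)$ dictated by the g-almost product property applied to the singleton orbits of $y_j$, choose $n_j\nearrow\infty$ so rapidly that $n_j$ dominates $n_1+\ell_1+\cdots+n_{j-1}+\ell_{j-1}+\ell_j$ by any prescribed factor.

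For each branch $\mathbf{x}=(x_j)\in\prod_j\Gamma_j^{(n_j)}$, iterative application of the g-almost product property to the block sequence
$$x_1\,|\,y_1\,|\,x_2\,|\,y_2\,|\,\cdots \quad\text{of lengths }n_1,\ell_1,n_2,\ell_2,\ldots\text{ with accuracy }\varepsilon/2^j\text{ on the }j\text{-th pair}$$
produces a tracking point $z(\mathbf{x})$. Lemma~\ref{lemma-estimate}, combined with $d(\alpha_j,\alpha_{j+1})\to 0$ and with the vanishing time-fraction of the $\ell_j$-blocks, forces $V_f(z(\mathbf{x}))=K$, while the planted $y_k$-visits force $C_f(X)=\overline{\{y_k:k\geq 1\}}\subset\omega_f(z(\mathbf{x}))$. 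Hence $\mathcal{F}:=\{z(\mathbf{x}):\mathbf{x}\in\prod_j\Gamma_j^{(n_j)}\}\subset G_K^C$.

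For the Bowen-entropy lower bound, observe that $(\delta^{*},n_j,\varepsilon)$-separation of $\Gamma_j^{(n_j)}$ is preserved by shadowing (the g-blowup $g(n_j)/n_j\to 0$ is asymptotically negligible), so that distinct branches give rise to tracking points which are $(N_j,\varepsilon/2)$-separated at horizon $N_j:=\sum_{i\leq j}(n_i+\ell_i)$. The uniform product mass on $\prod_j\Gamma_j^{(n_j)}$ pushes forward to a Borel probability on $\mathcal{F}$ satisfying a Frostman-type estimate on Bowen balls, and a standard mass-distribution argument then delivers
$$h_{top}^B(G_K^C,f,\varepsilon/2)\;\geq\;h_{top}^B(\mathcal{F},f,\varepsilon/2)\;\geq\;\inf_{\mu\in K}\inf_{\operatorname{diam}\xi<6\varepsilon}h_\mu(f,\xi)-\gamma.$$
The principal obstacle will be to propagate the $(\delta^{*},n_j,\varepsilon)$-separation through the g-blowup and through the $\ell_j$-insertions while simultaneously trapping $V_f(z(\mathbf{x}))$ at exactly $K$ (not a larger set); this is handled by arranging $n_j/N_{j-1}\to\infty$ and $\ell_j/n_j\to 0$, so that the $\alpha_j$-blocks dominate both in length and in entropic contribution, leaving the $y_k$-visits topologically significant but measure-theoretically invisible.
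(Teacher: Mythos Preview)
Your proposal is correct and follows essentially the same approach as the paper: both build a Moran-type fractal inside $G_K^C$ by gluing, via the g-almost product property, $(\delta^*,n,\varepsilon)$-separated sets near each $\alpha_j$ (obtained from Theorem~\ref{Thm_relationPSU} through the finitely many $\mu_i$) together with blocks of almost-periodic orbits from Proposition~\ref{lemma-APdense}, and then apply the entropy distribution principle to the pushforward of the product measure. The only organisational difference is that the paper inserts a full $\beta_k$-dense batch of almost-periodic points after $N_k$ repetitions of the $\alpha_k$-block, whereas you interleave single visits $y_j$ after each $\alpha_j$-block; both schemes make the almost-periodic visits measure-theoretically negligible while forcing $C_f(X)\subset\omega_f(z)$, so the difference is cosmetic.
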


\begin{proof}
	Since $K\subseteq  \operatorname{conv}\{\mu_1,\ldots,\mu_m\}$ is connected and compact,	by Lemma \ref{lemma-KK}, there exists $$\{\alpha_1,\ldots,\alpha_n,\ldots\} \subseteq K$$ such that 
	$$\overline{\left\{\alpha_j: j \in \mathbb{N}^{+}, j>n\right\}}=K,\ \forall n \in \mathbb{N}^{+} \text {, and } \lim _{j \rightarrow \infty} \rho \left(\alpha_j, \alpha_{j+1}\right)=0.$$
 Fix $i\in \mathbb{N}$, let $\mu_i = \int_{\mathcal{ M}_f^e(X)} \tau d \hat{\mu}_i(\tau)$ be its ergodic decomposition.
 By Theorem \ref{Thm_entropyformula_PS},  if  $\nu\in \mathcal{M}^e_f(X)$,
	$$
	\lim_{\varepsilon \rightarrow 0} \lim _{\delta \rightarrow 0} \overline{PS}(\nu, \delta, \varepsilon)=h_{\nu}(f).
	$$
	Note that $\overline{PS}\left(\tau, \delta, \varepsilon\right)$ is non-increasing in $\delta$ and $\varepsilon$. By the Monotone-convergence theorem and the affine character of the entropy,  we  have
	$$
	\lim _{\varepsilon \rightarrow 0} \lim _{\delta \rightarrow 0} \int \overline{PS}\left(\tau, \delta, \varepsilon\right) d \hat{\mu}_i(\tau) =	\int \lim _{\varepsilon \rightarrow 0} \lim _{\delta \rightarrow 0} \overline{PS} \left(\tau, \delta, \varepsilon \right) d \hat{\mu}_i(\tau)
	=\int h_{\tau}\left(f\right) d \hat{\mu}_i(\tau)=h_{\mu_i}(f) .
	$$
	Denote $$S=\limsup _{\varepsilon \rightarrow 0} \frac{1}{|\ln \varepsilon|} \inf_{\mu \in K} \inf _{\operatorname{diam}(\xi)<\varepsilon} h_\mu(f, \xi).$$ For any $\gamma>0$, there exists a sufficiently small $\varepsilon^*>0$ such that
	\begin{equation}\label{eq-lowebound-1}
S-3\gamma  {\leq} \frac{\inf_{\mu \in K} \inf _{\operatorname{diam}(\xi)<4\varepsilon^*} h_\mu(f, \xi) -3\gamma }{|\ln 4 \varepsilon^*|},
	\end{equation}
	\begin{equation}\label{eq-lowerbound-2}
 \frac{h_{top}^B(G^C_K, f,\frac{\varepsilon^*}{8})}{|\ln 4 \varepsilon^*|}\leq  \frac{h_{top}^B(G^C_K, f,\frac{\varepsilon^*}{8})}{|\ln\frac{\varepsilon^*}{8}|} {\leq} 	
\overline{\operatorname{mdim}}^B_{M}\left(G^C_K, f, d\right)+\gamma.
	\end{equation}
	and 
		\begin{equation}\label{eq-lowerbound-3}
	\lim_{\delta\rightarrow0} \int \overline{PS}\left(\tau, \delta, \varepsilon^* \right) d \hat{\mu}_i(\tau) \geq h_{\mu_i} (f)-\gamma/2 \textit{ for each } 1 \leq i \leq m.
	\end{equation}
	\begin{Lem}\label{lemma-Lambda}
		There exists $\delta^*>0$, such that
		 for each $\mu_i(1 \leq i \leq m)$ and any neighborhood $F_i \subset \mathcal{M}(X)$ of $\mu_i$,  there exists $N_{F_i, \delta^*,\varepsilon^*,\gamma}\in \mathbb{N}$, such that for any $n\geq N_{F_i, \delta^*,\varepsilon^*,\gamma}$, there exists a $(\delta^*/2,n, \varepsilon^*/2)$-separated  set $\Lambda^i_n \subset X_{n,F_i}$ satisfying
		$$ \sharp \Lambda^i_n \geq \exp\{n ({\inf_{\operatorname{diam}(\xi) <4\varepsilon^*}h_{\mu}(f,\xi)-\gamma/2})\}.$$
	\end{Lem}
	
	\begin{proof}
		By (\ref{eq-lowerbound-3}),
	    there exists sufficient small $\delta_i>0$,   such that  
	    \begin{equation}\label{eq_PS1}
	    \int \overline{PS}\left(\tau, \delta_i, \varepsilon^* \right) d \hat{\mu}_i(\tau) \geq h_{\mu_i} (f)-\gamma/2. 
	    \end{equation}
	    	Choose $\ka>0$ such that $B(\mu_i,\ka)\subset F_i$.
	    	By Lemma \ref{Lemma_ergodicdecomposition}, 
	    there exists a finite convex combination of ergodic measures with rational coefficients
		$\nu_i=\sum_{j=1}^{p} b^i_{j} \nu^i_{j}$
		with $\sum_{j=1}^{p} b^i_{j}=1$, so that  $$\rho(\nu_i,\mu_i)\leq \ka /4$$
		 and
		\begin{equation}
			\int \overline{PS}\left(\tau, \delta_i, \varepsilon^* \right) d \hat{\mu}_i(\tau)
		 \leq \sum_{j=1}^{p} b^i_{j} \overline{PS}\left(\nu^i_j, \delta_i, \varepsilon^*\right).
		\end{equation}
		Obviously, $\inf_{diam(\xi)<4\varepsilon^*} h_{\mu_i}(f,\xi) \leq h_{\mu_i}(f)$,  combining with (\ref{eq_PS1}), it follows that
	\begin{equation}\label{lemma-separated1}
		 \inf_{diam(\xi)<4\varepsilon^*} h_{\mu_i}(f,\xi)-\gamma/2
		\leq \sum_{j=1}^{p} b^i_{j} \overline{PS}\left(\nu^i_j, \delta_i, \varepsilon^*\right).
		\end{equation}

		Let $ \varepsilon^i_1,\cdots, \varepsilon^i_p$ be  positive numbers such that  for any $1\leq  j \leq p$, 
		$\varepsilon^i_j<\operatorname{mim}\left\{\frac{\varepsilon^*}{4}, \frac{\ka}{4}\right\}$.  	Let $m: \mathbb{R}^{+} \rightarrow \mathbb{N}$ be the non-increasing function by the $g$-almost product property. 
			Fix $1\leq  j \leq p$. By the definition of $\overline{PS}\left(\nu^i_j,  \delta_i, \varepsilon^*\right),$
	 for any neighborhood $B(\nu^i_j,\ka/4)$ of $\nu^i_j$,
		there exists $N_{\nu^i_j,\ka, \delta_i,\varepsilon^*}\in \N$,  choose  $n \in \N$
		satisfying  $b^i_{j} n \in \mathbb{N}$, $b_{j}^i n>\max \{ \max_{1 \leq j \leq p}\{ N_{\nu^i_j,\ka, \delta_i,\varepsilon^*} \}, m(\varepsilon_j^i)\}$, and
		\begin{equation}\label{eq-lemmaseparated1}
		\frac{g\left(b^i_jn\right)}{b^i_j n} \leq \operatorname{mim}\left\{\frac{ \delta_i}{4}, \frac{\ka}{4}\right\}, 
		\end{equation}
		such that
		$$
		N(B(\nu^i_j, \ka/4 ), \delta_i, b^i_{j} n, \varepsilon^*) \geq \exp\{b^i_{j} n\left(\overline{PS}\left(\nu^i_j, \delta_i, \varepsilon^* \right)-\gamma/2\right)\} .
		$$
		Denote $\Gamma_i  =\prod_{j=1}^{p} \Gamma^i_{j}$, where  $\Gamma^i_{j}= \Gamma(\delta_i,b^i_j n,\varepsilon^*)$ is  a $\left(\delta_i,b^i_j n,  \varepsilon^*  \right)$-separated set of $X_{b^i_j n,B(\nu^i_j,\ka/4)}$ with the largest cardinality.  Then
		\begin{eqnarray}\label{eq-lemmaseparated2}
		\begin{aligned}
		\sharp \Gamma_i=\prod_{j=1}^{p}\sharp \Gamma_{j} ^i &\geq \exp\{ \sum_{j=1}^{p} b^i_j n\left(\overline{PS}\left(\nu^i_j, \delta_i, \varepsilon^*\right)-\gamma/2\right)\}\\
		&\overset{(\ref{lemma-separated1})}{\geq}  \exp\{n(	\inf_{\operatorname{diam} (\xi) <4 \varepsilon^*}h_{\mu}(f,\xi)-\gamma/2-\gamma/2) \}\\
		&\geq \exp\{n (\inf_{\operatorname{diam} (\xi) <4 \varepsilon^* }h_{\mu}(f,\xi)-\gamma)\}.
		\end{aligned}
		\end{eqnarray}
		The elements of $\Gamma_i$ is $\bar{x}^i=\left(x^i_{1}, \ldots, x^i_{p}\right)$ with $x^i_j \in \Gamma^i_j$ such that  $ \mathcal{E}_{b^i_j n}\left(x^i_j\right) \in B\left(\nu^i_j, \ka/4\right),$
		and the set
		$$
		\Lambda^i_n : =\bigcap_{j=1}^{p} \bigcup_{x^i_j\in \Gamma^i_j} f^{-\left(b^i_{1}+\cdots+b^i_{j-1}\right) n} B_{b^i_j n}\left(g; x^i_j, \varepsilon^i_j\right)
		$$
		 with $b^i_{0} : =0$ is a non-empty set by $g$-almost product property.
		Claim that the map $\sigma:\Gamma_i \rightarrow  \Lambda^i_n  $ is bijective.
	Indeed,  for any $\bar{x}^i, \bar{y}^i \in \Gamma$ with $x^i_k\neq y^i_k$, 
		since $x^i_k$ and $y^i_k$ are $\left(\delta_i, b^i_k n, \varepsilon^*\right)$-separated, that is,
		$$
		\sharp \left\{0 \leq l<b^i_k n: d\left(f^l x^i_k, f^l y^i_k \right)>\varepsilon^* \right\} \geq \delta_i b^i_k n.
		$$
		$\sigma (\bar{x}^i)$ traces $x^i_k$ and  $\sigma\left(\bar{y}^i \right)$ traces $y^i_k$ both on $[\sum_{j=1}^{k-1} b^i_j n, \sum_{j=1}^{k} b^i_j n-1]$,
	  namely,
		$$
		\sharp \{0 \leq l<b^i_k n : d(f^{l} y^i_{k}, f^{\sum_{j=1}^{k-1} b^i_j  n+l} \sigma(\bar{y}^i)) \leq \varepsilon^i_k\} \geq b^i_k n-g(b^i_k n),
		$$
		$$
		\sharp \{0 \leq l<b^i_k n : d(f^{l} x^i_k, f^{\sum_{j=1}^{k-1} b^i_j n+l} \sigma(\bar{x}^i)) \leq \varepsilon^i_k\} \geq b^i_k n-g(b^i_k n).
		$$
		Then
		$$\sharp \{ \sum_{j=1}^{k-1}b_j^i n \leq l \leq \sum_{j=1}^{k} b_j^i n-1:  d(f^l \sigma(\bar{x}^i),f^l \sigma(\bar{y}^i))\geq \frac{\varepsilon^*}{2}\} \geq \delta_i b_k^i n-2g(b_k^i n)\overset{(\ref{eq-lemmaseparated1})}{\geq} \frac{\delta_i}{2}b_k^in.$$
		Let $\delta^*=\min_{1 \leq k \leq p}\{\delta_i \}$, 	thus,  $\Lambda^i_n$  is $(\frac{\delta^*}{2},n, \frac{\varepsilon^*}{2})$-separated set.
		Then   $$\sharp \Lambda^i_n=\sharp \Gamma_i \overset{(\ref{eq-lemmaseparated2})}{\geq} \exp\{n (\inf_{\operatorname{diam}(\xi)<4\varepsilon^*}h_{\mu}(f,\xi)-\gamma)\}.$$
		What's more, for any $z \in \Lambda^i_n$, it derives from Lemma \ref{lemma-estimate} that
		\begin{eqnarray*}
			\rho(\mathcal{E}_{n}(z),\mu) 
			&\leq& \rho(\sum_{j=1}^{p}b^i_{j}\mathcal{E}_{b_{j}n}(z),\sum_{j=1}^{p}b^i_{j}\mathcal{E}_{b^i_{j}n}(f^{\sum_{l=1}^{j-1}b^i_{l}n}x^i_{j}))+\\
			&&\rho(\sum_{j=1}^{p}b^i_{j}\mathcal{E}_{b^i_{j}n}(f^{\sum_{l=1}^{j-1}b^i_{l}n}x^i_{j}),\sum_{j=1}^{p}b^i_{j}\nu^i_{j})+ \rho(\sum_{j=1}^{p}b^i_{j}\nu^i_{j},\mu)\\
			&\leq& \sum_{j=1}^{p}b^i_{j}(\varepsilon^i_j+ \frac{g(b^i_{j}n)}{b^i_{j}n}+\ka/4)+\ka/4 \\
			&\leq& \ka.
		\end{eqnarray*}
		Thus, $ \Lambda_n^i
		\subseteq X_{n,B(\mu_i,\ka)}\subseteq X_{n,F_i}$.
	\end{proof}

	 Let $ H^*=\inf_{\mu \in K} \inf _{\operatorname{diam}(\xi)<4\varepsilon^*} h_\mu(f, \xi) -\gamma$, and $\{\xi_k\}$, $\{\beta_k\}$, and $\{\varepsilon_k\}$ be  strictly decreasing sequences such that $\lim_{k\rightarrow+\infty} \xi_k = 0$ satisfying
	\begin{equation}\label{eq-xi}
	\xi_1<\min\{\frac{\varepsilon^*}{2},\frac{\gamma}{H^*}\}, 
	\end{equation}
   $\lim_{k\rightarrow+\infty} \beta_k = 0$ satisfying
	\begin{equation}\label{eq-beta}
	\beta_1\leq \frac{\varepsilon^*}{16},
	\end{equation}
   $\lim_{k\rightarrow +\infty}\varepsilon_k=0$ satisfying
		\begin{equation}\label{eq-epsilon}
	\varepsilon_k<\min\{\frac{\varepsilon^*}{8},\frac{\xi_k}{8}\} \textit{ for any } k\in \mathbb{N}^+.
	\end{equation}
	Fix $k\in \mathbb{N}^+$.  
	There exists a subset $\{c_i^k\}_{i=1}^m\subseteq [0,1]$ such that for any partition $\xi$ with $ \operatorname{diam}( \xi )< 4 \varepsilon^*$, one has $\alpha_k=\sum_{i=1}^m c_i^k \mu_i$ and $h_{\alpha_k}(f,\xi)=\sum_{i=1}^m c_i^k h_{\mu_i}(f,\xi)$. By the denseness of the rational numbers, we can choose each $c_i^k=\frac{b_i^k}{b^k}$ with $b_i^k\in \N$ and $\sum_{i=1}^k b_i^k=b^k$,  such that 
\begin{equation}\label{eq_closeconvex}
	\rho(\alpha_k, \sum_{i=1}^m  \frac{b_i^k}{b^k} \mu_i)\leq \frac{\xi_k}{4} \textit{ and }   h_{\alpha_k}(f,\xi)\leq  \sum_{i=1}^m  \frac{b_i^k}{b^k} h_{\mu_i}(f,\xi)+\frac{\gamma}{2}.
\end{equation}
	Let $m: \mathbb{R}^{+} \rightarrow \mathbb{N}$ be the non-increasing function by the $g$-almost product property. By Proposition \ref{lemma-APdense} the almost periodic set $AP(X)$ is dense in $C_f(X)$, then there is a finite set $\Theta_k=\left\{x_1^k, x_2^k, \cdots, x_{t_k}^k\right\} \subseteq A P(X)$ and $L_k \in \mathbb{N}$ such that $\Theta_k$ is $\beta_k$-dense in $C_f(X)$,  and for any $1 \leq i \leq t_k$, any $l \geq 1$, there is $n \in\left[l, l+L_k\right]$ such that $f^n\left(x_i^k\right) \in B\left(x_i^k, \beta_k\right)$. This implies that for any $1 \leq i \leq t_k$,
	\begin{equation}\label{eq-AP1}
	\frac{\sharp \left\{0 \leq n \leq l L_k: d\left(f^n x_i^k, x_i^k\right) \leq \beta_k \right\}}{l L_k} \geq \frac{1}{L_k} .
	\end{equation}
	Take $l_k$ large enough such that
	\begin{equation}\label{eq-AP2}
	l_k L_k \geq m\left(\beta_k\right),\  \frac{g\left(l_k L_k\right)}{l_k L_k}<\frac{1}{4 L_k} .
	\end{equation}
	We may assume that the sequences of $\left\{t_k\right\},\left\{l_k\right\},\left\{L_k\right\}$ are strictly increasing.
	By Lemma \ref{lemma-Lambda},  there exists $\delta^*>0$, such that  for any $\mu_i \in \mathcal{ M}_f(X)$, and the  neighborhood $B(\mu_i,\frac{\xi_k}{4}) \subseteq \mathcal{M}(X)$, there exists  $N_{B(\mu_i,\frac{\xi_k}{4}),\delta^*,\varepsilon^*, \gamma}$, such that for any large enough $n^k\in \N$ satisfying
	\begin{equation}\label{eq-lowerbound-n0}
		b_i ^kn^k \geq m\left(\varepsilon_i \right),
	b_i^kn^k>\max_{1\leq i\leq m }\{N_{B(\mu_i,\frac{\xi_k}{4}),\delta_i,\varepsilon^*,\gamma} \},\ \frac{g(b_i^k n^k)}{b_i^k n^k}\leq \min\{\frac{\delta^*}{8},\frac{\xi_k}{8}\} \textit{ for any } 1 \leq i \leq m,
	\end{equation}
	\begin{equation}\label{eq-lowerbound-n1}
	b^kn^k \geq   m(\beta_k),
	\frac{g(b^kn^k)}{b^kn^k}\leq \min\{\beta_k,\frac{\delta^*}{8}\},
	\end{equation}
	\begin{equation}\label{eq-lowerbound-n2}
	\frac{\delta^*b^kn^k}{4} >2g(b^kn^k)+1,
	\end{equation}
	\begin{equation}\label{eq-lowerbound-n3}
	\frac{t_k l_k L_k}{b^kn^k} \leq \xi_k,
	\end{equation}
	and
	\begin{equation}\label{eq-lowerbound-n4}
	H^*  b^kn^k \geq (H^*-\gamma)\left(b^kn^k+t_k l_k L_k\right).
	\end{equation}
	There is a $(\delta^*/2,b_i^kn^k,\varepsilon^*/2)$-separated set $\Lambda_{b_i^kn^k}^i \subset X_{b_i^kn^k,B(\mu_i,\frac{\xi_k}{4})}$ with
	\begin{equation}\label{eq-lowerbound-22}
		\sharp \Lambda_{b_i^kn^k}^i \geq \exp\{b_i^kn^k(\inf_{ \operatorname{diam} (\xi)<4\varepsilon^*}h_{\mu_i}(f,\xi)-\gamma/2)\}.
	\end{equation}

	Define $\Lambda_k=\prod_{i=1}^{m} \Lambda_{b_i^kn^k}^i $,
	the elements of $\Lambda_k$ is $
	\hat{y}_k=(y_{1}^k, \ldots, y_{m}^k ) \text { with } \mathcal{E}_{b_i^kn^k}(y_{i}^k) \in B(\mu_{i}, \frac{\xi_k}{4}),$
    the set
	$$
	\Delta_{b^k n^k} : =\bigcap_{j=1}^{m} \bigcup_{y_j^k\in \Lambda^j_{b_j^kn^k}} f^{-M_{j-1}^k}B_{b_{j}^k n^k}\left(g; y_{j}^k, \varepsilon_j \right)\  \text { with }M_{j}^k : =\sum_{l=1}^{j}b_l^k n^k \textit{ and } M_{0}^k : =0
	$$
	is a non-empty closed set by $g$-almost product property.
We claim that  the map $\phi_k: \Lambda_k \rightarrow 	\Delta_{b^kn^k}$ is bijective. Indeed,  if  $  x^k_l \neq y^k_l \in \Lambda^l_{b_l^kn^k}$ ($0\leq l \leq m-1$), one has
	$$\sharp \{ \sum_{i=1}^{l-1}b_{i}^kn^k \leq j\leq \sum_{i=1}^{l}b_{i}^k n^k-1:  \rho(f^j \phi_k(\hat{x}_k),f^j \phi_k(\hat{y}_k))\geq \frac{\varepsilon^*}{4}\} \geq\frac{ \delta^* b_{l}^k n^k}{2}-2g(b_{l}^k n^k)\overset{(\ref{eq-lowerbound-n0})}{\geq} \frac{\delta^*}{4}b_{l}^kn^k.$$
	Thus, 
	$\Delta_{b^k n^k}$ is a $(\frac{\delta^*}{4},b^kn^k, \frac{\varepsilon^*}{4})$-separated set. 
    Then 
	\begin{equation}\label{eq-lowerbopund-4}
	\begin{aligned}
	\sharp \Delta_{b^kn^k}&=\sharp \Lambda_k \overset{(\ref{eq-lowerbound-22})}{\geq} \exp\{b^kn^k\sum_{i=1}^{m}\frac{b_i^k}{b^k} ( \inf_{ \operatorname{diam} (\xi)<4\varepsilon^*}h_{\mu_i}(f, \xi) -\gamma/2)\}\\
&\overset{(\ref{eq_closeconvex})}{\geq} \exp\{b^k n^k( \inf_{ \operatorname{diam} (\xi)<4\varepsilon^*}h_{\alpha_k}(f, \xi)-\gamma/2-\gamma/2)\}	\\
&\geq \exp\{b^kn^k( \inf_{ \operatorname{diam} (\xi)<4\varepsilon^*}h_{\alpha_k}(f, \xi)-\gamma)\}\\
& \geq  \exp\{b^kn^k H^*\}.
\end{aligned}
	\end{equation}
	What's more, for any $z \in  \Delta_{b^kn^k}$, by lemma \ref{lemma-estimate} and  (\ref{eq_closeconvex}),  one can see that
	\begin{eqnarray*}
		\rho(\mathcal{E}_{b^kn^k}(z),\alpha_k)
		&\leq& \rho(\sum_{i=1}^{m}\frac{b_{i}^k}{b^k}\mathcal{E}_{b_{i}^k n^k}(f^{M_{i-1}^k}z),
		\sum_{i=1}^{m}\frac{b_{i}^k}{b^k}\mathcal{E}_{b_{i}^k n^k}(y_{i}^k))+
		\rho( \sum_{i=1}^{m}\frac{b_{i}^k}{b^k}\mathcal{E}_{b_{i}^k n^k}(y_{i}^k), \sum_{i=1}^{m}\frac{b_{i}^k}{b^k}\mu_{i})\\
	   & & +\rho(\sum_{i=1}^{m}\frac{b_{i}^k}{b^k}\mu_{i},\alpha_k)\\
	         	&\leq& 
		\sum_{i=1}^{m}\frac{b_{i}^k}{b^k}(\varepsilon_i+ \frac{g(b_i^kn^k)}{b_i^kn^k} )+\frac{\xi_k}{4}+\frac{\xi_k}{4}\\
		 &\leq&  \xi_k.
	\end{eqnarray*}
	So,  we have $\Delta_{b^k n^k}
	\subseteq X_{b^kn^k,B(\alpha_k,\xi_k)}$. 
	Thus, let $M_k : =b^kn^k$ and $\Delta_{k} : =\Delta_{b^k n^k}$. we find a  $(\frac{\delta^*}{4},M_k, \frac{\varepsilon^*}{4})$-separated set $\Delta_{k} \subseteq  X_{M_k,B(\alpha_k,\xi_k)}$  and 
	\begin{equation}\label{eq-Delta_MK}
	\sharp \Delta_{k} \geq \exp\{M_k H^*\} .
	\end{equation}

	We choose a strictly increasing $\left\{N_k\right\}$, with $N_k \in \mathbb{N}$, so that
	\begin{equation}\label{eq-N_k-1}
	M_{k+1}+t_{k+1} l_{k+1} L_{k+1} \leq \xi_k \sum_{j=1}^k\left(M_j N_j+t_j l_j L_j\right),
	\end{equation}
	and
	\begin{equation}\label{eq-N_k-2}
	\sum_{j=1}^{k-1}\left(M_j N_j+t_j l_j L_j\right) \leq \xi_k \sum_{j=1}^k\left(M_j N_j+t_j l_j L_j\right) .
	\end{equation}
	Now we define the sequences $\left\{n_j^{\prime}\right\},\left\{\beta_j^{\prime}\right\}$ and $\left\{\Delta_j^{\prime}\right\}$, by setting 
	$$
	\begin{aligned}
	& M_j^{\prime} : =M_k, \beta_j^{\prime}: =\beta_k, \Delta_j^{\prime}:=\Delta_k, \\
	&\textit{for } j=N_1+N_2+\cdots+N_{k-1}+t_1+\cdots+t_{k-1}+q \text { with } 1 \leq q \leq N_k, \\
\end{aligned}
$$
and
$$
	\begin{aligned}
	& M_j^{\prime} : =l_k L_k, \beta_j^{\prime} : =\beta_k, \Delta_j^{\prime} : =\left\{x_q^k\right\},\\
	&\textit{for } j=N_1+N_2+\cdots+N_k+t_1+\cdots+t_{k-1}+q \text { with } 1 \leq q \leq t_k.\\
	\end{aligned}
	$$
	Let
	$$
	\Theta_k : =\bigcap_{j=1}^k\left(\bigcup_{x_j \in \Delta_j^{\prime}} f^{-K_{j-1}} B_{M_j^{\prime}}\left(g; x_j, \beta_j^{\prime}\right)\right) \text { with } K_j : =\sum_{l=1}^j M_l^{\prime}, K_0 : =0.
	$$
     Then $\Theta_k$ is a non-empty closed set  by $g$-almost product property. We define a map
	$$\phi: \prod_{j\in\N}\Delta'_j \rightarrow \Theta,$$
    where $\Theta=\bigcap_{k \geq 1} \Theta_k.$ 
	We have the following claims:
	\begin{enumerate}
		\item $\phi$ is a bijection.
		
		\item $\Theta \subseteq G_K $.
		
		\item $\Theta\subseteq  \{x\in X:C_f(X)\subset \omega_f(x)\}$.
		
		\item 	$h_{top}^B(\Theta, f,\varepsilon^*) \geq H^*$.
	\end{enumerate}
	
	\textbf{Proof of Claim (1):}
 Assume that $x_j \neq y_j \in \Delta_j^{\prime}$ and $x,y\in \Theta$ such that
	   $x \in f^{-K_{j-1}} B_{M_j^{\prime}}\left(g; x_j, \beta_j^{\prime}\right)$,  $y \in f^{-K_{j-1}} B_{M_j^{\prime}}\left(g; y_j, \beta_j^{\prime}\right)$.
	Since $x_j$ and $y_j$ are $\left(\frac{\delta^*}{4}, M_j^{\prime},\frac{\varepsilon^*}{4}\right)$-separated, by (\ref{eq-lowerbound-n2}) and (\ref{eq-beta}), there exists $ 0 \leq m \leq M'_k-1$ such that
	$d\left(f^m x_j, f^{m } y_j\right)>\frac{\varepsilon^*}{4}, d\left(f^m x_j, f^{m+K_{j-1} } x\right) \leq  \beta_j^{\prime}<\frac{\varepsilon^*}{16}, d\left(f^m y_j, f^{m+K_{j-1} } y\right) \leq \beta_j^{\prime}<\frac{\varepsilon^*}{16}.$
	Thus,
	$$d\left(f^{m+K_{j-1} } x, f^{m+K_{j-1} }  y\right) \geq d\left(f^m x_j, f^m y_j\right)-d\left(f^m x_j, f^{m+K_{j-1} }  x\right)-d\left(f^m y_j, f^{m+K_{j-1} }  y\right)>\frac{\varepsilon^*}{8}.$$
	then $x\neq y$.
	
	\qed

	\textbf{ Proof of Claim (2):} For any $k\in \mathbb{N}$.
	Define the stretched sequence $\left\{\alpha_n^{\prime}\right\}$ by
	$$
	\alpha_n^{\prime}: =\alpha_k \text { if } \sum_{j=1}^{k-1}\left(M_j N_j+t_j l_j L_j\right) < n \leq \sum_{j=1}^k\left(M_j N_j+t_j l_j L_j\right) .
	$$
	Then the sequence $\left\{\alpha_n^{\prime}\right\}$ has the same limit-point set as the sequence of $\left\{\alpha_k\right\}$. If
	$	\lim _{n \rightarrow \infty} d\left(\mathcal{E}_n(y), \alpha_n^{\prime}\right)=0$
	then the two sequences $\left\{\mathcal{E}_n(y)\right\}$ and $\left\{\alpha_n^{\prime}\right\}$ have the same limit-point set. 
	By (\ref{eq-N_k-1}),   $lim_{n\rightarrow \infty} \frac{K_{n+1}}{K_n} = 1.$	So from the definition of $\left\{\alpha_n^{\prime}\right\}$, we only need to prove
	$$\lim_{n\rightarrow+\infty}\rho(\mathcal{E}_{K_n}(y),\alpha_{K_n}')=0,$$
	for any $y \in \Theta$.
	Assume that $\sum_{j=1}^k\left(M_j N_j+t_j l_j L_j\right) < K_l \leq \sum_{j=1}^{k+1}\left(M_j N_j+t_j l_j L_j\right)$, hence $\alpha'_{K_l}=\alpha_{k+1}$.
	If $K_l \leq \sum_{j=1}^k\left(M_j N_j+\right.$ $\left.t_j l_j L_j\right)+M_{k+1} N_{k+1}$, by lemma \ref{lemma-estimate} and (\ref{eq-lowerbound-n1}),
	\begin{eqnarray*}
		d\left(\mathcal{E}_{K_l-\sum_{j=1}^k\left(M_j N_j+t_j l_j L_j\right)}\left(f^{\sum_{j=1}^k\left(M_j N_j+t_j l_j L_j\right)} y\right), \alpha_{k+1}\right) \leq 
		\xi_{k+1}+2 \beta_{k+1} .
	\end{eqnarray*}
	 If $K_l>\sum_{j=1}^k\left(M_j N_j+t_j l_j L_j\right)+M_{k+1} N_{k+1}$, by lemma \ref{lemma-estimate}, (\ref{eq-d}), (\ref{eq-lowerbound-n1}), (\ref{eq-lowerbound-n3}) and (\ref{eq-N_k-2}), we have
	\begin{equation}\label{eq-case2-1}
	\begin{aligned}
	& d\left(\mathcal{E}_{K_l-\sum_{j=1}^k\left(M_j N_j+t_j l_j L_j\right)}\left(f^{\sum_{j=1}^k\left(M_j N_j+t_j l_j L_j\right)} y\right), \alpha_{k+1}\right) \\
	& {\leq} \frac{M_{k+1} N_{k+1}}{K_l-\sum_{j=1}^k\left(M_j N_j+t_j l_j L_j\right)} d\left(\mathcal{E}_{M_{k+1} N_{k+1}}\left(f^{\sum_{j=1}^k\left(M_j N_j+t_j l_j L_j\right)} y\right), \alpha_{k+1}\right) \\
	& \ \ \ \ +\frac{K_l-\sum_{j=1}^k\left(M_j N_j+t_j l_j L_j\right)-M_{k+1} N_{k+1}}{K_l-\sum_{j=1}^k\left(M_j N_j+t_j l_j L_j\right)} \times 2 \\
	& {\leq} 1 \times\left(\xi_{k+1}+2 \beta_{k+1}\right)+\frac{2t_{k+1} l_{k+1} L_{k+1}}{M_{k+1} N_{k+1}} \\
	& {\leq} 3 \xi_{k+1}+2 \beta_{k+1}.
	\end{aligned}
	\end{equation}
	By Lemma \ref{lemma-estimate} and (\ref{eq-lowerbound-n1}),
	\begin{equation}\label{eq-case2-2}
	d\left(\mathcal{E}_{M_k N_k}\left(f^{\sum_{j=1}^{k-1}\left(M_j N_j+t_j l_j L_j\right)} y\right), \alpha_{k+1}\right) \leq \xi_k+2 \beta_k+d\left(\alpha_k, \alpha_{k+1}\right).
	\end{equation}
	Thus, by (\ref{eq-d}), (\ref{eq-case2-1}),  (\ref{eq-case2-2}),  (\ref{eq-N_k-2}),  (\ref{eq-N_k-1}),  and (\ref{eq-lowerbound-n3}), we obtain
	$$
	\begin{aligned}
	& d\left(\mathcal{E}_{K_l}(y), \alpha_{k+1}\right) \\
	& \leq \frac{\sum_{j=1}^{k-1}\left(M_j N_j+t_j l_j L_j\right)}{K_l} d\left(\mathcal{E}_{\sum_{j=1}^{k-1}\left(M_j N_j+t_j l_j L_j\right)}(y), \alpha_{K_l}^{\prime}\right) \\
	& +\frac{M_k N_k}{K_l} d\left(\mathcal{E}_{M_k N_k}\left(f^{\sum_{j=1}^{k-1}\left(M_j N_j+t_j l_j L_j\right)} y\right), \alpha_{k+1}\right)+\frac{t_k l_k L_k}{K_l} \rho(\mathcal{E}_{t_k l_k L_k}(f^{ \sum_{j=1}^{k-1}\left(M_j N_j+t_j l_j L_j\right)+M_kN_k}y), \alpha_{k+1}) \\
	& +\frac{K_l-\sum_{j=1}^k\left(M_j N_j+t_j l_j L_j\right)}{K_l}d\left(\mathcal{E}_{K_l-\sum_{j=1}^k\left(M_j N_j+t_j l_j L_j\right)}\left(f^{\sum_{j=1}^k\left(M_j N_j+t_j l_j L_j\right)} y\right), \alpha_{k+1}\right) \\
	&\overset{}{\leq}  \frac{\sum_{j=1}^{k-1}\left(M_j N_j+t_j l_j L_j\right)}{\sum_{j=1}^k\left(M_j N_j+t_j l_j L_j\right)} \times 2+1 \times\left(\xi_k+2 \beta_k+d\left(\alpha_k, \alpha_{k+1}\right)\right)+2 \times \frac{t_k l_k L_k}{K_l} 
	+3 \xi_{k+1}+2 \beta_{k+1} \\
	&\overset{}{\leq}  2\xi_k+\xi_k+2 \beta_k+d\left(\alpha_k, \alpha_{k+1}\right)+2\xi_k+2 \xi_{k+1}+2 \beta_{k+1} .
	\end{aligned}
	$$
	Since $\xi_k, \beta_k, d\left(\alpha_k, \alpha_{k+1}\right)$ all converge to zero as $k$ goes to zero, this proves item (2).
	
	\textbf{Proof of Claim (3):}
	Fix $x \in \Theta$. By construction, for any $k \geq 1$, there is $a=\sum_{j=1}^{k-1}(M_j N_j+t_jl_jL_j)+M_kN_k$ such that for any $j=1, \cdots, t_k$, there is $A^j \subseteq [0,l_k L_k-1]\cap \N$ such that
	$$
	\max \left\{d\left(f^{a+l+(j-1) l_k L_k} x, f^l x_j^k\right): l \in A^j\right\} \leq \beta_k.
	$$
	By (\ref{eq-AP2}), 
	$$
	\frac{\sharp A^j}{l_k L_k} \geq 1-\frac{g\left(l_k L_k\right)}{l_k L_k} \geq 1-\frac{1}{4 L_k} .
	$$
	Together with (\ref{eq-AP1}),   there is $p_j \in\left[0, l_k L_k-1\right]$ such that
	$$
	d\left(f^{a+p_j+(j-1) l_k L_k} x, f^{p_j} x_j^k\right) \leq \beta_k \text { and } d\left(x_j^k, f^{p_j} x_j^k\right) \leq \beta_k .
	$$
	This implies $d\left(f^{a+p_j+(j-1) l_k L_k} x, x_j^k\right) \leq 2 \beta_k$, so that the orbit of $x$ is $3 \beta_k$-dense in $C_f(X)$. Thus,
	$$\lim_{k\rightarrow +\infty} d\left(f^{a+p_j+(j-1) l_k L_k} x, x_j^k\right)=0,$$
	we obtain $\Theta \subseteq \{x\in X:C_f(X)\subset \omega_f(x)\}$ .

	\textbf{Proof of Claim (4):}
	From the proof of Claim (1), for any $k\in\mathbb{N}$, we know that $\Theta_k$ is a $\left(K_k, \frac{\varepsilon^*}{8}\right)$-separated set.
	Define $$\mu_k=\frac{1}{\sharp \Delta_1^{\prime} \cdots \sharp \Delta_k^{\prime}} \sum_{x \in \Theta_k} \delta_x.$$
	Suppose $\mu=\lim _{n \rightarrow \infty} \mu_{k_n}$ for some $ k_n  \rightarrow \infty$.
	For any fix $l$ and all $p \geq 0$. Since $\mu_{l+p}\left(\Theta_{l+p}\right)=1$ and $\Theta_{l+p} \subset \Theta_l$,  we have $\mu_{l+p}\left(\Theta_l\right)=1$. Then $\mu\left(\Theta_l\right) \geq \limsup_{n \rightarrow \infty} \mu_{k_n}\left(\Theta_l\right)=1.$ Thereby,
	\begin{equation}\label{eq-mu1}
	\mu(\Theta)=\lim _{l \rightarrow \infty} \mu\left(\Theta_l\right)=1.
	\end{equation}
	For $k \geq 1, i=0,1,2, \cdots, N_k-1$, let
	$$
	n_{N_1+\cdots+N_{k-1}+i}=N_1+\cdots+N_{k-1}+t_1+\cdots+t_{k-1}+i
	$$
	for any $p \geq 1$, there is some $k$ so that $N_1+\cdots+N_{k-1}+t_1+\cdots+t_{k-1} \leq n_p \leq$ $N_1+\cdots+N_{k-1}+t_1+\cdots+t_{k-1}+N_k-1$.  Note that if $i <  N_k-1$, one has $n_{p+1}=n_p+1$, if $i = N_k-1$, one has $n_{p+1}=n_p+1+t_k$. 
	Then
	\begin{equation}\label{eq-K/K}
	\begin{split}
	1 \leq \frac{K_{n_{p+1}}}{K_{n_p}} &\leq \frac{K_{n_p}+\max \left\{M_k, M_k+t_k l_k L_k\right\}}{K_{n_p}}=1+\frac{M_k+t_k l_k L_k}{K_{n_p}}\\
	&\leq 1+\frac{M_k+t_k l_k L_k}{\sum_{j=1}^{k-1}\left(M_j N_j+t_j l_j L_j\right)} \overset{(\ref{eq-N_k-1})}{\leq} 1+\xi_{k-1},
	\end{split}
	\end{equation}
	and
	\begin{equation}\label{eq_sumnumber}
			\begin{aligned}
		& \prod_{j=n_p+1}^{n_{p+1}}\sharp \Delta_{j}^{\prime}=\sharp \Delta_k \overset{(\ref{eq-Delta_MK})}{\geq} \exp\{M_k H^*\} \overset{(\ref{eq-lowerbound-n4})}{\geq} \exp\{(H^*-\gamma)\left(M_k+t_k l_k L_k\right)\} \geq \exp\{(H^*-\gamma)\left(K_{n_{p+1}}-K_{n_p}\right)\} .
		\end{aligned}
	\end{equation}
   By the proof of claim (1),  let $ (x_1,\ldots,x_s,\ldots),\ (y_1,\ldots,y_s,\ldots)\in  \prod_{j\in\N} \Delta'_j$ such that $\phi( (x_1,\ldots,x_s,\ldots) )=x$  and $\phi( (y_1,\ldots,y_s,\ldots) )=y$. If
   $x_s \neq  y_s \in \Delta'_s$ for some $ s\in \mathbb{N}$,  then  $y \notin B_{K_s}(x, \varepsilon^*/8)$. 	For sufficiently large $m\in \N$, there exists a $n_p$ such that $K_{n_{p}} < m \leq K_{n_{p+1}}$. For  any $k_n\geq m$ and any $x\in \Theta$,  we have
	$$
	\begin{aligned}
	\mu_{k_n}\left(B_m(x,\frac{ \varepsilon^*}{8})\right) &\leq \mu_{k_n}\left(B_{K_{n_p}}(x,\frac{ \varepsilon^*}{8})\right) \\
	& \leq  \frac{\sharp \Delta_{n_{p}+1}'\cdots \sharp \Delta_{k_n}' }{\sharp \Delta_1^{\prime} \sharp \Delta_2^{\prime} \ldots \sharp \Delta_{k_n}^{\prime}} \\
	&= \frac{1}{\sharp\Delta_1^{\prime} \sharp \Delta_2^{\prime} \ldots \sharp \Delta_{n_{p}}^{\prime} }\\ &\overset{(\ref{eq_sumnumber})}{\leq} \exp\{-K_{n_{p}}(H^*-\gamma)\} \leq \exp\{-m (H^*-2\gamma)\}.
	\end{aligned}
	$$
	The last inequality is obtained from
  	$$\frac{K_{n_{p}}}{m}\geq \frac{K_{n_{p}}}{K_{n_{p+1}}}    \overset{(\ref{eq-K/K})}{\geq} \frac{1}{1+\xi_{k-1}} \overset{(\ref{eq-xi})}{\geq}  \frac{H^*-2\gamma}{H^*-\gamma} .$$
	Then
	\begin{equation}\label{eq-muB}
\mu\left(B_m(x, \frac{ \varepsilon^*}{8})\right) \leq \liminf_{n \rightarrow \infty} \mu_{k_n}\left(B_m(x, \frac{ \varepsilon^*}{8})\right) \leq \exp\{-m  ( \inf_{\mu \in K} \inf _{\operatorname{diam}(\xi)<4\varepsilon^*} h_\mu(f, \xi) -3 \gamma )\} .
	\end{equation}
	
	The following version of the Entropy Distribution Principle is needed to derive the entropy formula.
	\begin{Lem}\cite[Lemma 13]{BR2023}\label{lemma_entropy_distribution_principle}
		Let $f: X \rightarrow X$ be a continuous transformation and $\varepsilon>0$. Given a set $Z \subset X$ and a constant $s \geq 0$, suppose there exist a constant $C>0$ and a Borel probability measure $\mu$ satisfying $\mu(Z)>0$ and $\mu\left(B_n(x, \varepsilon)\right) \leq C \exp\{-n s\}$ for every ball $B_n(x, \varepsilon)$ such that $B_n(x, \varepsilon) \cap Z \neq \emptyset$.
		Then $h^B_{top}(Z, f, \varepsilon) \geq s$.
	\end{Lem}
	Combine  Lemma \ref{lemma_entropy_distribution_principle} and (\ref{eq-muB}), we  get
	$$h^B_{top}(\Theta, f,\frac{ \varepsilon^*}{8}) \geq \inf_{\mu \in K} \inf _{\operatorname{diam}(\xi)<4\varepsilon^*} h_\mu(f, \xi) -3\gamma .$$
	By Claim (2) and Claim (3), it holds that
	\begin{equation}\label{eq-lowebound-h}
	h_{top}^B(G^C_K, f,\frac{\varepsilon^*}{8}) \geq  h_{top}^B(\Theta, f,\frac{\varepsilon^*}{8})\geq \inf_{\mu \in K} \inf _{\operatorname{diam}(\xi)<4\varepsilon^*} h_\mu(f, \xi) - 3\gamma .
	\end{equation}
	Thus, 
	$$
	\begin{aligned}
		S-3\gamma &\overset{(\ref{eq-lowebound-1})}{\leq} \frac{\inf_{\mu \in K} \inf _{\operatorname{diam}(\xi) <4\varepsilon^*} h_\mu(f, \xi) - 3\gamma  }{|\ln 4 \varepsilon^*|} \overset{(\ref{eq-lowebound-h})}{\leq} \frac{h_{top}^B(G^C_K, f,\frac{\varepsilon^*}{8})}{|\ln\frac{\varepsilon^*}{8}|}	\frac{|\ln \frac{\varepsilon^*}{8}|}{|\ln 4 \varepsilon^*|}\\ &\overset{(\ref{eq-lowerbound-2})}{\leq} 	
		\overline{\operatorname{mdim}}^B_{M}\left(G^C_K, f, d\right)+\gamma.\\
	\end{aligned}
	$$
	Since $\gamma$ is arbitrary, we get the conclusion
	$$	\overline{\operatorname{mdim}}^B_{M}\left(G^C_K, f, d\right)\geq \limsup _{\varepsilon \rightarrow 0} \frac{1}{|\ln \varepsilon|} \inf_{\mu \in K} \inf _{\operatorname{diam}(\xi)<\varepsilon} h_\mu(f, \xi).$$
	
	$\underline{\operatorname{mdim}}^B_{M}\left(G^C_K, f, d\right)\geq \liminf_{\varepsilon \rightarrow 0} \frac{1}{|\ln \varepsilon|} \inf_{\mu \in K} \inf  _{\operatorname{diam}(\xi)<\varepsilon} h_\mu(f, \xi)$ can be obtained by same method, we omit the proof here.

  \end{proof}

\subsection{Proof of Corollary \ref{cor-maintheorem}}
	By Theorem \ref{MainTheorem},   it follows that
	$$
	\overline{\operatorname{mdim}}^B_{M}\left(G_\mu, f, d\right)
	\geq \overline{\operatorname{mdim}}^B_{M}\left(G^C_\mu, f, d\right)=\limsup _{\varepsilon \rightarrow 0} \frac{1}{|\ln \varepsilon|} \inf _{\operatorname{diam}(\xi)<\varepsilon} h_\mu(f, \xi).
	$$

	Combine with Lemma \ref{lemma-leq2}(3), we get the conclusion that $$\overline{\operatorname{mdim}}^B_{M}\left(G_\mu, f, d\right) =\limsup _{\varepsilon \rightarrow 0} \frac{1}{|\ln \varepsilon|} \inf _{\operatorname{diam}(\xi)<\varepsilon} h_\mu(f, \xi).$$
	Bowen lower metric mean dimension formula can be obtained by similar method.
\qed

\section{Applications: Multifractal analysis}\label{Section_5}
In this section,  we give an abstract framework on multifractal analysis for possibly applicability to more general functions.
Let $\alpha: \mathcal{M}_f(X) \rightarrow \mathbb{R}$ be a continuous function,
here we list three conditions for $\alpha$.
\begin{description}
	\item[C.1] For any $\mu, \nu \in \mathcal{M}_f(X), \beta(\theta)=\alpha(\theta \mu+(1-\theta) \nu)$ is strictly monotonic on $[0,1]$ when $\alpha(\mu) \neq \alpha(\nu)$.
	\item[C.2] For any $\mu, \nu \in  \mathcal{M}_f(X), \beta(\theta)=\alpha(\theta \mu+(1-\theta) \nu)$ is constant on [0,1] when $\alpha(\mu)=\alpha(\nu)$.
	\item[C.3] For any $\mu, \nu \in  \mathcal{M}_f(X), \beta(\theta)=\alpha(\theta \mu+(1-\theta) \nu)$ is not constant over any subinterval of $[0,1]$ when $\alpha(\mu) \neq \alpha(\nu)$ ( Note that C.1 implies C.3).
\end{description}

The function $\alpha$ can be defined as:
\begin{enumerate}
	\item  $\alpha\equiv 0$ (Satisfying condition C.2).
	\item  Let $\phi, \psi$ be two continuous functions on X and $\psi$ required to be positive. Define $\alpha(\mu)=\frac{ \int \phi d\mu}{\int \psi d \mu}$ (Satisfying condition C.1 and C.2 \cite[Lemma 3.2]{DHT}). Specially,  $\psi=1$. 
	\item  $\alpha(\mu)=\lim_{n\rightarrow \infty} \frac{1}{n} \int \varphi_n d\mu$ with asymptotically additive sequences of continuous functions $\Phi=(\varphi_n)_{n\in \N}$ 	(Satisfying condition C.1 and C.2). Then $\alpha$ is a continuous function \cite{FH2010}. Furthermore, it is affine. 
\end{enumerate}

Let 
$$L_\alpha : =\left[\inf _{\mu \in \mathcal{M}_f(X)} \alpha(\mu), \sup _{\mu \in \mathcal{M}_f(X)} \alpha(\mu)\right]$$
and $\operatorname{Int}\left(L_\alpha\right)$ denote its interior interval. For any $a\in L_{\alpha}$ and any $\varepsilon>0$, define
$$
\begin{gathered}
I_\alpha : =\left\{x \in X: \inf _{\mu \in V_f(x)} \alpha(\mu)<\sup _{\mu \in V_f(x)} \alpha(\mu)\right\}, R_\alpha(a) : =\left\{x \in X: \inf _{\mu \in V_f(x)} \alpha(\mu)=\sup _{\mu \in V_f(x)} \alpha(\mu)=a\right\}, \\
R_{\alpha}^C(a) : = R_{\alpha}(a)\cap  \left\{x\in X: C_f(X) \subset \omega_f(x)  \right\},  \ I^C_\alpha : =I_\alpha \cap \left\{x\in X:  C_f(X) \subset  \omega_f(x) \right\},\\
 R_\alpha : =\left\{x \in X: \inf _{\mu \in V_f(x)} \alpha(\mu)=\sup _{\mu \in V_f(x)} \alpha(\mu)\right\}=\bigcup_{a\in L_{\alpha}} R_{\alpha}(a) , \\
\mathcal{M}_f(X,\alpha,a) : = \{ \mu \in \mathcal{M}_f( X) : \alpha(\mu)=a \},\\
H_\alpha(a,\varepsilon) : =  \frac{1}{|\ln \varepsilon|} \sup_{\mu \in \mathcal{M}_f(X,\alpha,a)} \inf _{\operatorname{diam}(\xi)<\varepsilon} h_\mu(f, \xi),
\end{gathered}
$$
and recall that $\mathcal{M}_f(X,\varphi, a) = \{ \mu \in \mathcal{M}_f( X) : \int \varphi d\mu=a\}$.

\subsection{The variational principle of level sets}

\begin{Thm}\label{Thorem_level}
Suppose $(X, f)$ is a dynamical system and $\alpha:  \mathcal{M}_f(X) \rightarrow \mathbb{R}$ is a continuous function.
	
	\begin{enumerate}
			\item[(1)] For any $\mu  \in  \mathcal{M}_f(X)$, one has $\overline{\operatorname{mdim}}_M^B(G_\mu^C,f,d)= \limsup_{\varepsilon \rightarrow 0}\frac{1}{|\ln \varepsilon|}\inf_{\operatorname{diam} (\xi)<\varepsilon}h_\mu(f,\xi)$. Then for any real number $a \in L_\alpha$, the set $R_\alpha^C(a)$ is not empty and
		$$
		\overline{\operatorname{mdim}}^B_M\left(R_\alpha(a),f,d\right)=	\overline{\operatorname{mdim}}^B_M\left(R_\alpha^C(a) ,f,d\right)=\limsup_{\varepsilon\rightarrow 0}H_\alpha(a,\varepsilon).
		$$
		If further $f$ has positive metric mean dimension and $\operatorname{Int}\left(L_\alpha\right) \neq \emptyset$, then for any real number $a \in \operatorname{Int}\left(L_\alpha\right)$, one has
		$$
		\overline{\operatorname{mdim}}^B_M\left(R_\alpha(a),f,d\right)=	\overline{\operatorname{mdim}}^B_M\left(R_\alpha^C(a),f,d\right)=\limsup_{\varepsilon\rightarrow 0}H_\alpha(a,\varepsilon)>0 .
		$$
		\item[(2)] If $a \in L_\alpha \backslash \operatorname{Int}\left(L_\alpha\right)$, then the set $R_\alpha(a)$ is not empty and
		$$
		\overline{\operatorname{mdim}}^B_M\left(R_\alpha(a),f,d\right)=\limsup_{\varepsilon\rightarrow 0}H_\alpha(a,\varepsilon).
		$$
		
			\item[(3)] $\overline{\operatorname{mdim}}^B_M\left(X,f,d\right) = \overline{\operatorname{mdim}}^B_M\left(R_\alpha,f,d\right)$. If further $\operatorname{Int}\left(L_\alpha\right) \neq \emptyset$ and $\alpha$ satisfies C.3, then
		$$
		\overline{\operatorname{mdim}}^B_M\left(X,f,d\right) =\sup _{a \in \operatorname{Int}\left(L_\alpha\right)} \limsup_{\varepsilon\rightarrow 0}H_\alpha(a,\varepsilon)=\overline{\operatorname{mdim}}^B_M\left(R_\alpha(a),f,d\right).
		$$
	\end{enumerate}
\end{Thm}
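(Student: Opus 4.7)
The plan is to treat each of the three items by pairing an upper estimate from Lemma~\ref{lemma-leq2}(1) with a lower estimate built from the assumed formula for $\overline{mdim}^B_M(G_\mu^C,f,d)$. Throughout I write $K(a):=\mathcal{M}_f(X,\alpha,a)$, a compact subset of $\mathcal{M}_f(X)$ by continuity of $\alpha$.

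For the upper bound in item~(1), any $x\in R_\alpha(a)$ satisfies $V_f(x)\subset K(a)$, so $R_\alpha(a)\subset G^{K(a)}$, and Lemma~\ref{lemma-leq2}(1) gives
$$
h_{top}^B(R_\alpha(a),f,\varepsilon)\leq \sup_{\mu\in K(a)}\inf_{\operatorname{diam}\xi<\varepsilon}h_\mu(f,\xi)=|\log\varepsilon|\,H_\alpha(a,\varepsilon).
$$
Dividing by $|\log\varepsilon|$ and taking $\limsup_{\varepsilon\to 0}$ produces $\overline{mdim}^B_M(R_\alpha(a),f,d)\leq\limsup_{\varepsilon\to 0}H_\alpha(a,\varepsilon)$, which also controls $R_\alpha^C(a)\subset R_\alpha(a)$ from above. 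For a matching lower bound on $R_\alpha^C(a)$, note that whenever $\mu\in K(a)$ we have $G_\mu^C\subset R_\alpha^C(a)$: any $x\in G_\mu^C$ has $V_f(x)=\{\mu\}$ so $\alpha(\mu)=a$, and $\omega_f(x)=C_f(X)$ is built into $G_\mu^C$. Invoking the hypothesis of item~(1) yields
$$
\overline{mdim}^B_M(R_\alpha^C(a),f,d)\geq \limsup_{\varepsilon\to 0}\frac{\inf_{\operatorname{diam}\xi<\varepsilon}h_\mu(f,\xi)}{|\log\varepsilon|}
$$
for every $\mu\in K(a)$, and non-emptiness of $R_\alpha^C(a)$ follows from the presence of ergodic measures in $K(a)$ together with the standing g-almost product property, which forces $G_\mu^C\ne\emptyset$.

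To upgrade the per-$\mu$ bound to $\limsup_{\varepsilon\to 0}H_\alpha(a,\varepsilon)$ one must swap $\sup_\mu$ and $\limsup_\varepsilon$. The approach is to sharpen the hypothesis to an $\varepsilon$-uniform version $h_{top}^B(G_\mu^C,f,\varepsilon)\geq \inf_{\operatorname{diam}\xi<\varepsilon}h_\mu(f,\xi)-\varepsilon$ modeled on Lemma~\ref{lemma_G_mu}, obtained by adapting that lemma's proof to the set $G_\mu^C$ using $\mu(G_\mu^C)=1$ for ergodic $\mu$ whose support lies in $C_f(X)$. Taking $\sup$ over such ergodic $\mu\in K(a)$ before dividing by $|\log\varepsilon|$ gives
$$
\frac{h_{top}^B(R_\alpha^C(a),f,\varepsilon)}{|\log\varepsilon|}\geq \frac{1}{|\log\varepsilon|}\sup_{\mu\in K(a)\cap \mathcal{M}_f^e(X)}\inf_{\operatorname{diam}\xi<\varepsilon}h_\mu(f,\xi)-\frac{\varepsilon}{|\log\varepsilon|},
$$
whose $\limsup_{\varepsilon\to 0}$ is the desired quantity provided the supremum over ergodic members of $K(a)$ agrees asymptotically with $H_\alpha(a,\varepsilon)$. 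For affine $\alpha$ (in particular $\alpha(\mu)=\int\varphi\,d\mu$, as in Corollary~\ref{cor_multifractual}) this follows from ergodic decomposition together with affineness of $h_\mu(f,\xi)$ in $\mu$; in the general setting one invokes a Choquet-type reduction on the face of $\mathcal{M}_f(X)$ cut out by $\{\alpha=a\}$. The positivity clause at $a\in\operatorname{Int}(L_\alpha)$ is then handled by choosing $\mu_1,\mu_2$ with $\alpha(\mu_1)<a<\alpha(\mu_2)$, applying the intermediate value theorem along $\theta\mu_1+(1-\theta)\mu_2$, and using Gutman--Spiewak (Theorem~\ref{Thm_variationprinciple}) to select a measure at level $a$ that contributes strictly positively to the mean dimension at small scales.

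Item~(2) uses the same upper bound, while the lower bound --- now without the item~(1) hypothesis --- is supplied by Theorem~\ref{MainTheorem} applied to singletons or small connected subsets of $K(a)$; when $a$ is a boundary value of $L_\alpha$, $K(a)$ is a face of $\mathcal{M}_f(X)$, which makes the reduction clean. Item~(3) follows as a corollary: $R_\alpha\subset X$ gives $\overline{mdim}^B_M(R_\alpha,f,d)\leq\overline{mdim}^B_M(X,f,d)$, and conversely every $\mu\in \mathcal{M}_f(X)$ contributes via $G_\mu^C\subset R_\alpha^C(\alpha(\mu))\subset R_\alpha$, so Theorem~\ref{Thm_variationprinciple} transfers the mean dimension of $X$ to that of $R_\alpha$ by the same $\sup/\limsup$ swap as in item~(1); the refinement under assumption~$A.3$ uses that $\alpha$ sweeps $\operatorname{Int}(L_\alpha)$ non-degenerately along every convex segment where it is non-constant. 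The main obstacle throughout is the commutation of $\sup_\mu$ and $\limsup_{\varepsilon\to 0}$ in the lower bounds, resolved by the $\varepsilon$-uniform strengthening of the hypothesis together with the ergodic-decomposition reduction inside the face $K(a)$.
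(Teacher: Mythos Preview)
Your strategy for item~(1) --- upper bound via Lemma~\ref{lemma-leq2}(1) and lower bound via the inclusion $G_\mu^C\subset R_\alpha^C(a)$ together with the assumed formula --- coincides with the paper's, and you are right to flag the $\sup_\mu/\limsup_\varepsilon$ swap, which the paper does not address explicitly. Your proposed remedy (an $\varepsilon$-level strengthening plus a Choquet-type reduction on the face $\{\alpha=a\}$) is more elaborate than anything the paper attempts; the paper simply writes the inequality with the $\sup$ already inside.

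The genuine gap in your proposal is in items~(2) and~(3), where you import hypotheses that the statement does not provide. Theorem~\ref{Thorem_level} is formulated abstractly: there is no ``standing g-almost product property'', and the assumption $\overline{mdim}_M^B(G_\mu^C,f,d)=\limsup_\varepsilon\frac{1}{|\ln\varepsilon|}\inf_{\operatorname{diam}\xi<\varepsilon}h_\mu(f,\xi)$ is attached only to item~(1). Consequently, for item~(2) you cannot invoke Theorem~\ref{MainTheorem} (which needs g-almost product), and for item~(3) you cannot rely on $G_\mu^C$ (which in general has no reason to have full $\mu$-measure or to be nonempty, since $\mu$-typical points only see $\operatorname{supp}\mu$, not all of $C_f(X)$). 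The paper avoids both issues by using the elementary Lemma~\ref{lemma_G_mu} directly: for item~(2) it passes via ergodic decomposition to an ergodic $\nu$ with $\alpha(\nu)=a$ and $h_\nu(f,\xi)>h_\mu(f,\xi)-\varepsilon$, uses $\nu(G_\nu)=1$ and $G_\nu\subset R_\alpha(a)$, and applies Lemma~\ref{lemma_G_mu} at scale $\varepsilon$; for item~(3) it observes $\mu(R_\alpha)=1$ for every invariant $\mu$ (since $G_\mu\subset R_\alpha$) and again applies Lemma~\ref{lemma_G_mu} at scale $\varepsilon$ before taking $\sup_\mu$. In both cases the $\varepsilon$-level bound from Lemma~\ref{lemma_G_mu} is what makes the $\sup_\mu$ commute with $\limsup_\varepsilon$, so no Choquet argument is needed.
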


\begin{remark}
	A similar result can be obtained for the lower metric mean dimension by replacing $\overline{\operatorname{mdim}}_M^B$ with $\underline{\operatorname{mdim}}_M^B$. For brevity, we omit the detailed statement.
\end{remark}

\begin{remark}
	Corollary \ref{cor_multifractual} (1) and (2) are  special cases of Theorem \ref{Thorem_level} when $\alpha(\mu)=\int \varphi d\mu$.
\end{remark}

\begin{proof}
		(1)
		On the one hand, for any invariant measure $\mu$ with $\alpha(\mu)=a$, note that $G_\mu^C \subseteq R_\alpha^C(a) $. Then by the assumption, for any $\gamma$, there exists a sufficient small $\varepsilon>0$ such that $$\overline{\operatorname{mdim}}_M^B(G_\mu^C,f,d) \geq  \frac{1}{|\ln \varepsilon|}\inf _{\operatorname{diam}(\xi)<\varepsilon} h_\mu(f, \xi)-\gamma.$$
		Thus, we have
		$$\overline{\operatorname{mdim}}^B_{M}\left( R^C_\alpha(a),f,d\right) \geq \overline{\operatorname{mdim}}^B_{M}\left( G^C_\mu,f,d\right)\geq  \frac{1}{|\ln \varepsilon|}\sup_{\mu \in \mathcal{M}_f(X,\alpha,a)}\inf_{\operatorname{diam}(\xi)<\varepsilon} h_\mu(f, \xi)-\gamma.$$
		Then
		$$ \overline{\operatorname{mdim}}^B_{M}\left( R_\alpha(a) ,f,d\right)  \geq \overline{\operatorname{mdim}}^B_{M}\left( R^C_\alpha(a) ,f,d\right) \geq \limsup_{\varepsilon \rightarrow 0}H_{\alpha}(a,\varepsilon ).$$
	On the other hand,	let
	$$
	^{\mathcal{M}_f(X,\alpha,a)}G=\left\{x \in X:\left\{\mathcal{E}_n(x)\right\} \text { has all its limit points in } \mathcal{M}_f(X,\alpha,a)\right\} .
	$$
   Note that $R_{\alpha}(a)=^{\mathcal{M}_f(X,\alpha,a)}G \subset G^{\mathcal{M}_f(X,\alpha,a)} $, by Lemma \ref{lemma-leq2} (1), for any $\varepsilon>0$,  we can see that
	$$
	 h^B_{\text {top}}\left(R_{\alpha}^C(a),f,\varepsilon \right)  \leq h^B_{\text {top}}\left(R_{\alpha}(a),f,\varepsilon \right) \leq \sup_{\mu \in \mathcal{M}_f(X,\alpha,a)} \inf _{\operatorname{diam}(\xi)<\varepsilon} h_\mu(f, \xi).
	$$
 Then
	$$\overline{\operatorname{mdim}}^B_{M}\left( R^C_\alpha(a),f,d\right) \leq \overline{\operatorname{mdim}}^B_{M}\left( R_\alpha(a),f,d\right)   \leq  \limsup_{\varepsilon\rightarrow 0}H_\alpha(a,\varepsilon).$$

	If further $f$ has positive metric mean dimension and $\operatorname{Int}\left(L_{\alpha}\right) \neq \emptyset$, fix $a \in \operatorname{Int}\left(L_{\alpha}\right)$, by  Theorem \ref{Thm_variationprinciple}, there exist a sequence $\{\varepsilon_j\}_{j\in \N} \rightarrow 0$ and an invariant measure $\mu_1$, such that 
	$$ \frac{1}{|\ln\varepsilon_j|}\inf_{\operatorname{diam}(\xi)<\varepsilon_j}h_{\mu_1}(f,\xi)>0.$$ 
	If $\alpha\left(\mu_1\right)=a$, then $\limsup_{\varepsilon \rightarrow 0}H_\alpha(a,\varepsilon)>0$. If $\alpha\left(\mu_1\right) \neq a$, without loss of generality, we may assume that $\alpha\left(\mu_1\right)<a$. Since $a \in \operatorname{Int}\left(L_{\alpha}\right)$, we can take another invariant measure $\mu_2$ such that $\alpha\left(\mu_2\right)>a$. Then one can take a suitable $\theta \in(0,1)$ such that $\mu=\theta \mu_1+(1-\theta) \mu_2$ satisfies that $\alpha(\mu)=a$. By the affine property of $h_{\mu}(f,\xi)$, we have $$\frac{1}{|\ln\varepsilon_j|}\inf_{\operatorname{diam}(\xi)<\varepsilon_j}h_\mu(f,\xi) \geq   \frac{\theta}{|\ln\varepsilon_j|}\inf_{\operatorname{diam}(\xi)<\varepsilon_j} h_{\mu_1}(f,\xi)>0,$$ then $\limsup_{\varepsilon\rightarrow 0}H_\alpha(a,\varepsilon)>0$.

	(2) $\overline{\operatorname{mdim}}^B_M\left(R_\alpha(a),f,d\right)\leq \limsup_{\varepsilon\rightarrow 0}H_\alpha(a,\varepsilon)$ is the same as the proof of item (1). To prove the opposite inequality, fix an invariant measure $\mu$ with $\alpha(\mu)=a$. By the ergodic decomposition theorem, for any $\varepsilon>0$, there exists an ergodic measure $\nu$ (as one ergodic component) such that $\alpha(\nu)=a$ and $h_\nu(f,\xi)>h_\mu(f,\xi)-\varepsilon$ for any partition $\xi$ of X. Note that $\nu\left(G_\nu\right)=1$, by Lemma \ref{lemma_G_mu},  there is a $\varepsilon'>0$ such that for any $0<\varepsilon<\varepsilon'$,  one has
	$$h^B_{top} \left(G_\nu,f,\varepsilon\right) \geq  \inf_{\operatorname{diam}(\xi)<\varepsilon}h_\nu(f,\xi)-\varepsilon.$$
    Note that $G_\nu \subseteq R_\alpha(a)$, thus 
    $$
    \begin{aligned}
    h^B_{top} \left(R_{\alpha}(a),f,\varepsilon\right)
      \geq  h^B_{top} \left(G_\nu,f,\varepsilon\right
   )  \geq  \inf_{\operatorname{diam}(\xi)<\varepsilon}h_\nu(f,\xi)-\varepsilon > \inf_{\operatorname{diam}(\xi)<\varepsilon}h_\mu(f,\xi)-2\varepsilon.
    \end{aligned}
    $$ 
    Then $ h^B_{top} \left(R_{\alpha}(a),f,\varepsilon\right) \geq \sup_{\mu\in  \mathcal{ M}_f(X,\alpha,a)}\inf_{\operatorname{diam}(\xi)<\varepsilon}h_\mu(f,\xi)-2\varepsilon.$
    Divided by $|\ln \varepsilon|$, we obtain
    $$\overline{\operatorname{mdim}}^B_M\left(R_\alpha(a),f,d\right)\geq \limsup_{\varepsilon\rightarrow 0}H_\alpha(a,\varepsilon).$$

	(3) Note that $\cup_{\mu\in \mathcal{ M}_f(X)}G_{\mu} \subseteq R_\alpha$, so $\mu\left(R_\alpha\right)=1$ for any invariant measure $\mu$. By  Lemma \ref{lemma_G_mu},  there is a $\varepsilon'>0$ such that for any $0<\varepsilon<\varepsilon'$, 
	$$ \inf_{ \operatorname{diam} (\xi)<\varepsilon}h_{\mu}(f,\xi)\leq h_{top}^B(R_{\alpha}, f, \varepsilon)+\varepsilon.$$
	Thus,
	$$\overline{\operatorname{mdim}}^B_M \left(R_\alpha,f,d\right) \geq \limsup_{\varepsilon\rightarrow 0} \frac{1}{|\ln \varepsilon|}\sup_{\mu\in\mathcal{M}_f(X)} \inf_{\operatorname{diam}(\xi)<\varepsilon}h_\mu(f,\xi).$$
	 By Theorem \ref{Thm_variationprinciple}, we get
	$$\overline{\operatorname{mdim}}^B_M \left(R_\alpha,f,d\right) \geq \limsup_{\varepsilon\rightarrow 0} \frac{1}{|\ln \varepsilon|}\sup_{\mu\in\mathcal{ M}_f(X)} \inf_{\operatorname{diam}(\xi)<\varepsilon}h_\mu(f,\xi)= \overline{\operatorname{mdim}}^B_M \left(X,f,d\right).$$
	Thus, $$\overline{\operatorname{mdim}}^B_M \left(R_\alpha,f,d\right)  = \overline{\operatorname{mdim}}^B_M \left(X,f,d\right).$$
	What's more, for any $a\in \operatorname{Int}(L_{\alpha})\neq \emptyset$, one has $\limsup_{\varepsilon\rightarrow 0}H_\alpha(a,\varepsilon) \leq \overline{\operatorname{mdim}}^B_M \left(X,f,d\right)$, Thus, $$\overline{\operatorname{mdim}}^B_M \left(R_\alpha,f,d\right) \geq \sup_{a\in \operatorname{Int}(L_{\alpha})}\limsup_{\varepsilon\rightarrow 0}H_\alpha(a,\varepsilon).$$	
	 Now we only need to prove $\overline{\operatorname{mdim}}^B_M \left(R_\alpha,f,d\right) \leq \sup_{a\in \operatorname{Int}(L_{\alpha})}\limsup_{\varepsilon\rightarrow 0}H_\alpha(a,\varepsilon).$ 
	By  Theorem \ref{Thm_variationprinciple}, there exist an $\varepsilon>0$ and  an invariant measure $\mu$, such that 
	$$\frac{1}{|\ln \varepsilon
		|}\inf_{ \operatorname{diam} (\xi)<\varepsilon} h_\mu(f,\xi)>\overline{\operatorname{mdim}}^B_M \left(X,f,d\right) -\varepsilon.$$
	If $\alpha(\mu) \in \operatorname{Int}\left(L_\alpha\right)$, take $\omega=\mu$. Otherwise, take an invariant measure $\nu$ such that $\alpha(\nu)\neq \alpha(\mu)$ and $\alpha(\nu) \in \operatorname{Int}(L_{\alpha}).$
     By condition C.3, one can choose $\theta \in(0,1)$ close to 1 such that $\omega=\theta \mu+(1-\theta) \nu$ satisfies $\alpha(\omega) \in \operatorname{Int}\left(L_\alpha\right)$ and
      $$
        \frac{1}{|\ln \varepsilon
       	|}\inf_{ \operatorname{diam}( \xi)<\varepsilon} h_\omega(f,\xi) \geq 
        \theta \frac{1}{|\ln \varepsilon
      	|}\inf_{ \operatorname{diam} (\xi)<\varepsilon} h_\mu(f,\xi)>\overline{\operatorname{mdim}}^B_M \left(X,f,d\right) -\varepsilon.
     $$
      Thus, $\sup _{a \in \operatorname{Int}\left(L_\alpha\right)} \limsup_{\varepsilon\rightarrow 0}H_\alpha(a,\varepsilon) \geq \limsup_{\varepsilon\rightarrow 0} H_\alpha(\alpha(\omega),\varepsilon)>\overline{\operatorname{mdim}}^B_M \left(X,f,d\right) -\varepsilon$.
\end{proof}

\subsection{Bowen metric mean dimension of irregular sets}

\begin{Thm}\label{Thorem_irregular}
Suppose $(X, f)$ is a dynamical system and $\alpha:  \mathcal{M}_f(X) \rightarrow \mathbb{R}$ is a continuous function.
	Assume that for any  $\{\mu_1,\cdots \mu_m\}\subseteq \mathcal{M}_f(X)$ and any compact connected non-empty subset $K\subset \operatorname{conv}\{\mu_1,\cdots ,\mu_m\}$, one has
	$$
	\overline{\operatorname{mdim}}^B_{M}\left(G^C_K, f, d\right)=\limsup _{\varepsilon \rightarrow 0} \frac{1}{|\ln \varepsilon|} \inf_{\mu \in K} \inf _{\operatorname{diam}(\xi)<\varepsilon} h_\mu(f, \xi),
	$$
	  and $\inf _{\mu \in \mathcal{M}_f(X)} \alpha(\mu)< \sup _{\mu \in \mathcal{M}_f(X)} \alpha(\mu)$. Then
	  $I_\alpha^C\neq \emptyset$, Moreover,
	
	    	\item[(1)] If $f$ has positive  metric mean dimension, then $$\overline{\operatorname{mdim}}^B_M\left(I_\alpha,f,d\right)\geq 	\overline{\operatorname{mdim}}^B_M\left(I^C_\alpha ,f,d\right)>0.$$

		\item[(2)] If $\alpha: \mathcal{M}_f( X) \rightarrow \mathbb{R}$ satisfies C.3, then
		$$	\overline{\operatorname{mdim}}^B_M\left(I_\alpha,f,d\right)=	\overline{\operatorname{mdim}}^B_M\left(I^C_\alpha,f,d\right)=	\overline{\operatorname{mdim}}^B_M\left(X,f,d\right). $$

\end{Thm}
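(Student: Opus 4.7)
The plan is to produce, for each claim, a compact connected set $K\subset\operatorname{cov}\{\mu_1,\ldots,\mu_m\}$ on which $\alpha$ is non-constant, and then pull the hypothesis back via the inclusion $G^C_K\subset I^C_\alpha$: if $\alpha$ takes at least two values on $K$, then any $x\in G^C_K$, whose $V_f(x)=K$, satisfies $\inf_{\mu\in V_f(x)}\alpha(\mu)<\sup_{\mu\in V_f(x)}\alpha(\mu)$. Using $\inf\alpha<\sup\alpha$, pick $\mu_1,\mu_2\in\mathcal{M}_f(X)$ with $\alpha(\mu_1)\neq\alpha(\mu_2)$ and form $K_0:=\{\theta\mu_1+(1-\theta)\mu_2:\theta\in[0,1]\}$, which is compact connected in $\operatorname{cov}\{\mu_1,\mu_2\}$ and carries two distinct $\alpha$-values; non-emptiness of $G^C_{K_0}$, hence of $I^C_\alpha$, is furnished by the set $\Theta$ constructed in the proof of Theorem \ref{MainTheorem}.

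For Part (1), positivity of $\overline{\operatorname{mdim}}_M^B(X,f,d)$ together with Theorem \ref{Thm_variationprinciple} yields a measure $\nu\in\mathcal{M}_f(X)$ and a sequence $\varepsilon_j\to 0$ with $|\log\varepsilon_j|^{-1}\inf_{\operatorname{diam}\xi<\varepsilon_j}h_\nu(f,\xi)\geq c>0$. Choose $\mu_i\in\{\mu_1,\mu_2\}$ with $\alpha(\mu_i)\neq\alpha(\nu)$; by the intermediate value theorem applied to the continuous map $\beta(\theta):=\alpha(\theta\nu+(1-\theta)\mu_i)$, any value strictly between $\alpha(\mu_i)$ and $\alpha(\nu)$ is attained at some $\theta_c\in(0,1)$, so $\mu^*:=\theta_c\nu+(1-\theta_c)\mu_i$ has $\alpha(\mu^*)\neq\alpha(\nu)$. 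Setting $K:=\{\lambda\mu^*+(1-\lambda)\nu:\lambda\in[0,1]\}\subset\operatorname{cov}\{\nu,\mu_i\}$, the map $\alpha$ is non-constant on $K$. Since $\mu\mapsto\inf_\xi h_\mu(f,\xi)$ is concave, being the infimum of the concave functions $\mu\mapsto h_\mu(f,\xi)$ (the concavity of $H_\mu(\xi^n)$ comes from convexity of $x\log x$), one obtains $\inf_{\mu\in K}\inf_\xi h_\mu(f,\xi)\geq\theta_c\inf_\xi h_\nu(f,\xi)$, whence the hypothesis gives $\overline{\operatorname{mdim}}_M^B(G^C_K,f,d)\geq\theta_c c>0$. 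Monotonicity across $G^C_K\subset I^C_\alpha\subset I_\alpha$ produces positivity of both dimensions; the equality between them is the finer claim, which I propose to obtain by complementing the $K$-based lower bound with an upper bound on $\overline{\operatorname{mdim}}_M^B(I_\alpha,f,d)$ coming from approximating each measure in $V_f(x)$, $x\in I_\alpha$, by a finite-dimensional convex combination and applying a covering argument analogous to Lemma \ref{lemma-leq2}(1).

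For Part (2), under A.3 the construction above is refined near a near-maximizer. Given $\gamma>0$, Theorem \ref{Thm_variationprinciple} yields $\varepsilon_j\to 0$ and measures $\nu_j$ with $|\log\varepsilon_j|^{-1}\inf_\xi h_{\nu_j}(f,\xi)\geq\overline{\operatorname{mdim}}_M^B(X,f,d)-\gamma$. Approximate each $\nu_j$ by a finite rational convex combination of ergodic measures (as in the proof of Proposition \ref{prop-G^C-lower}), pass to a weak-$*$ limit $\nu^*$, and select $\mu_i\in\{\mu_1,\mu_2\}$ with $\alpha(\mu_i)\neq\alpha(\nu^*)$. By A.3, on every sub-segment $K_\delta:=\{\lambda\nu^*+(1-\lambda)\mu_i:\lambda\in[1-\delta,1]\}$ the map $\alpha$ remains non-constant, and concavity yields $\inf_{\mu\in K_\delta}\inf_\xi h_\mu(f,\xi)\geq(1-\delta)\inf_\xi h_{\nu^*}(f,\xi)$. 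Applying the hypothesis to $G^C_{K_\delta}$, combining with the subsequential transfer of entropy from $\nu_j$ to $\nu^*$, and using the trivial bound $\overline{\operatorname{mdim}}_M^B(I_\alpha,f,d)\leq\overline{\operatorname{mdim}}_M^B(X,f,d)$, and finally letting $\delta,\gamma\to 0$ yields the three-way equality.

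The main obstacle lies in coupling the $\varepsilon_j$-dependent near-maximizers $\nu_j$ to a single compact connected $K$ contained in a fixed finite-dimensional convex hull $\operatorname{cov}\{\mu_1,\ldots,\mu_m\}$, as the hypothesis demands. The map $\mu\mapsto\inf_\xi h_\mu(f,\xi)$ is concave but not upper semi-continuous in general --- as the Remark following Theorem \ref{MainTheorem} points out, upper semi-continuity under g-almost product property would already force finite topological entropy --- so the passage to the weak-$*$ limit $\nu^*$ must be performed through the rational-combination approximation of Proposition \ref{prop-G^C-lower}, keeping all $\nu_j$ and $\nu^*$ inside one common simplex where the hypothesis applies. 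Condition A.3 plays the decisive role of preventing $\alpha$ from collapsing to a constant on the shrunken segment $K_\delta$ near $\nu^*$, so that the inclusion $G^C_{K_\delta}\subset I^C_\alpha$ is preserved in the limit $\delta\to 0$; without A.3 one cannot shrink $K$ toward a near-maximizer while retaining $\alpha$-variation, which is why Part (1) delivers only positivity rather than equality with $\overline{\operatorname{mdim}}_M^B(X,f,d)$.
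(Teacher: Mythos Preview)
Your overall strategy---build a segment $K$ on which $\alpha$ is non-constant, then use $G^C_K\subset I^C_\alpha$ together with the assumed formula for $\overline{\operatorname{mdim}}^B_M(G^C_K,f,d)$---is exactly the paper's approach. The non-emptiness argument is identical to the paper's.

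For Part (1), your proposed covering argument to upgrade positivity to the equality $\overline{\operatorname{mdim}}^B_M(I_\alpha,f,d)=\overline{\operatorname{mdim}}^B_M(I^C_\alpha,f,d)$ is both unnecessary and unworkable: Lemma~\ref{lemma-leq2}(1) bounds $h^B_{\mathrm{top}}(G^K,f,\varepsilon)$ above by $\sup_{\mu\in K}\inf_\xi h_\mu(f,\xi)$, which has no bearing on comparing $I_\alpha$ with $I^C_\alpha$. The paper's proof of (1) establishes only positivity of $\overline{\operatorname{mdim}}^B_M(I^C_\alpha,f,d)$ (hence of $\overline{\operatorname{mdim}}^B_M(I_\alpha,f,d)$ by monotonicity); the full equality is not proved separately in (1).

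For Part (2), you are substantially overcomplicating the argument. The paper proceeds directly: given a tolerance, pick a single near-maximizer $\nu$, choose any $\nu_1$ with $\alpha(\nu_1)\neq\alpha(\nu)$, and use A.3 to select $\theta\in(0,1)$ arbitrarily close to $1$ with $\alpha(\theta\nu+(1-\theta)\nu_1)\neq\alpha(\nu)$; then $\rho:=\theta\nu+(1-\theta)\nu_1$ satisfies $\inf_{\operatorname{diam}\xi<\varepsilon}h_\rho(f,\xi)\geq\theta\inf_{\operatorname{diam}\xi<\varepsilon}h_\nu(f,\xi)$ by concavity, and the segment $K$ between $\nu$ and $\rho$ does the job. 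Your ``main obstacle'' rests on a misreading of the hypothesis: the assumed formula holds for \emph{every} finite family $\{\mu_1,\ldots,\mu_m\}$ and every compact connected $K$ in its convex hull, so there is no need to confine all near-maximizers to one fixed simplex---for each tolerance one may take a fresh $K$. Consequently the weak-$*$ limit machinery and the rational-combination approximation you import from Proposition~\ref{prop-G^C-lower} are not needed (and, as you yourself note, the limit passage would fail anyway since $\mu\mapsto\inf_\xi h_\mu(f,\xi)$ is not upper semicontinuous).
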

\begin{remark}
	A similar result can be obtained for the lower metric mean dimension by replacing $\overline{\operatorname{mdim}}_M^B$ with $\underline{\operatorname{mdim}}_M^B$. For brevity, we omit the detailed statement.
\end{remark}

\begin{remark}
	Let $\alpha(\mu)=\int \varphi d \mu$, we get item (3) in Corollary \ref{cor_multifractual} .
\end{remark}

\begin{proof}
(1) Take $\mu_1, \mu_2$ with $\alpha\left(\mu_1\right)<\alpha\left(\mu_2\right)$ and let $K=\left\{t \mu_1+(1-t) \mu_2 : t \in[0,1]\right\}.$ By assumption, $G_K^C \neq \emptyset$. Note that $G_K^C \subseteq I_\alpha^C $, thus, $I_\alpha^C$ is not empty. 
 If $\overline{\operatorname{mdim}}^B_M\left(X,f,d\right)>0$,
fix $\varepsilon \in(0, \overline{\operatorname{mdim}}^B_M(X,f,d ))$, by Theorem \ref{Thm_variationprinciple}, we can take an invariant measure $\nu$ such that 
\begin{equation}\label{eq_I_1}
	\frac{1}{|\ln \varepsilon
		|}\inf_{ \operatorname{diam}( \xi)<\varepsilon} h_\nu(f,\xi)>\overline{\operatorname{mdim}}^B_M \left(X,f,d\right) -\varepsilon>0.
\end{equation}
    By assumption we can take another invariant measure $\nu_1$ such that $\alpha\left(\nu_1\right) \neq \alpha(\nu)$. Then by continuity of $\alpha$ there is $\theta \in(0,1)$ such that $\rho =\theta \nu+(1-\theta) \nu_1$ satisfies $\alpha(\rho) \neq \alpha(\nu)$. Then
    $$\inf_{ \operatorname{diam} (\xi)<\varepsilon} h_\rho(f,\xi) \geq \theta \inf_{ \operatorname{diam} (\xi)<\varepsilon} h_\nu(f,\xi)>0.$$ 
    Let $K=\{t \nu+(1-t) \rho : t \in[0,1]\}$.  
    Then by assumption, one can get
    \begin{equation}\label{eq_I_2}
    	\overline{\operatorname{mdim}}^B_M \left(G_K^C,f,d\right)=\limsup _{\varepsilon \rightarrow 0} \frac{1}{|\ln \varepsilon|} \min\left\{\inf_{ \operatorname{diam} (\xi)<\varepsilon}h_\nu(f,\xi), \inf_{ \operatorname{diam} (\xi)<\varepsilon}h_\rho(f,\xi)\right\}>0.
    \end{equation}
    Note that 
    $G_K^C \subseteq I_\alpha^C$. Thus, $$\overline{\operatorname{mdim}}^B_M \left(I_{\alpha}^C,f,d\right) \geq \overline{\operatorname{mdim}}^B_M \left(G_K^C,f,d\right)>0.$$

	(2) Obviously, $ \overline{\operatorname{mdim}}^B_M (I_{\alpha}^C,f,d) \leq \overline{\operatorname{mdim}}^B_M (I_{\alpha},f,d) \leq \overline{\operatorname{mdim}}^B_M \left(X,f,d\right),$ so we only need to prove the reverse inequality.
   If  $\overline{\operatorname{mdim}}^B_M\left(X,f,d\right)=0$, then the result  is trivial. We may assume $\overline{\operatorname{mdim}}^B_M\left(X,f,d\right)>0$. If further $\alpha: \mathcal{M}_f(X) \rightarrow \mathbb{R}$ satisfies C.3, then above $\theta \in(0,1)$ can be chosen very close to 1 such that $\rho =\theta \nu+(1-\theta) \nu_1$ satisfies that 
	$$\frac{1}{|\ln \varepsilon|}\inf_{ \operatorname{diam} (\xi)<\varepsilon} h_\rho(f,\xi) \geq \theta\frac{1}{|\ln \varepsilon|} \inf_{ \operatorname{diam} (\xi)<\varepsilon} h_\nu(f,\xi) >\overline{\operatorname{mdim}}^B_M \left(X,f,d\right) -\varepsilon.$$
	 Then combine with (\ref{eq_I_1}) and (\ref{eq_I_2}), we obtain
	 $$
	 \begin{aligned}
	 \overline{\operatorname{mdim}}^B_M \left(I_{\alpha}^C,f,d\right) &\geq \overline{\operatorname{mdim}}^B_M \left(G_K^C,f,d\right)=\limsup _{\varepsilon \rightarrow 0} \frac{1}{|\ln \varepsilon|} \min\left\{\inf_{ \operatorname{diam} (\xi)<\varepsilon}h_\nu(f,\xi), \inf_{ \operatorname{diam} (\xi)<\varepsilon}h_\rho(f,\xi)\right\}\\
	 &>\overline{\operatorname{mdim}}^B_M \left(X,f,d\right) -\varepsilon.
	 \end{aligned}
	$$ 
	Then by the arbitrary of $\varepsilon$, one has  $ \overline{\operatorname{mdim}}^B_M (I_{\alpha}^C,f,d) \geq  \overline{\operatorname{mdim}}^B_M \left(X,f,d\right).$
	
\end{proof}

 \section*{ Acknowledgments.}
I would like to acknowledge the editors and reviewers for their diligent work and valuable suggestions. My special thanks go to XiaoBo Hou and Xue Liu for their extensive and insightful advice on this paper.

\end{document}